\newtheorem{remark}[theorem]{Remark}
\newtheorem{problemstatement}[theorem]{Problem}
\newtheorem{algorithmdefinition}[theorem]{Algorithm}
\newcommand{\reals}{\mathbb{R}} 
\newcommand{\realsnonnegative}{\mathbb{R}_{\ge 0}} 
\newcommand{\naturals}{\mathbb{N}}
\newcommand{\co}{\operatorname{co}}
\newcommand{\interior}{\operatorname{int}}
\newcommand{\dom}{\operatorname{dom}}
\newcommand{\primalsol}{\mathcal{X}}
\newcommand{\ctriggerset}{\mathcal{T}^c} 
\newcommand{\dtriggerset}{\mathcal{T}_i}
\newcommand{\ctriggersete}{\mathcal{T}^{c,e}} 
\newcommand{\ctriggersetsigma}{\mathcal{T}^{c,\sigma}} 
\newcommand{\ctriggersetzero}{\mathcal{T}^{c,0}} 
\newcommand{\dtriggersete}{\mathcal{T}^{e}_i}
\newcommand{\dtriggersetzero}{\mathcal{T}^{0}_i}
\newcommand{\dtriggersetrequest}{\mathcal{T}_i^{\operatorname{request}}}
\newcommand{\dtriggersetsend}{\mathcal{T}_i^{\operatorname{send}}}
\newcommand{\dtriggersetsynch}{\mathcal{T}^{\operatorname{synch}}_i}
\newcommand{\nomflow}{f}
\newcommand{\NormInf}[1]{\| #1 \|_{\infty}} 
\newcommand{\NormTwo}[1]{\| #1 \|_{2}} 
\newcommand{\until}[1]{\{ 1,\dots, #1\}} 
\newcommand{\neigh}{\mathcal{N}} 
\newcommand{\Kbound}{K_*} 
\newcommand{\oprocendsymbol}{\hbox{$\bullet$}}
\newcommand{\oprocend}{\relax\ifmmode\else\unskip\hfill\fi\oprocendsymbol}
\newcommand{\setdef}[2]{\{#1 \; |\; #2\}}
\newcommand{\map}[3]{#1: #2 \rightarrow #3}
\newcommand{\longthmtitle}[1]{\mbox{}\textup{\bf {(#1).}}}
\newcommand{\myclearpage}{\clearpage}
 \renewcommand{\myclearpage}{}
\begin{document}

\title{Distributed linear programming
  \\
  with event-triggered communication}



\author{Dean Richert \qquad Jorge Cort\'es \thanks{The authors are
    with the Department of Mechanical and Aerospace Engineering,
    University of California, San Diego, 9500 Gilman Dr, La Jolla CA
    92093, USA, {\tt\small \{drichert,cortes\}@ucsd.edu}} A
  preliminary version of this paper was submitted to the
  $53^{\rm{rd}}$ IEEE Conference on Decision and Control.}

\maketitle

\begin{abstract}
  We consider a network of agents whose objective is for the aggregate
  of their states to converge to a solution of a linear program in
  standard form.  Each agent has limited information about the problem
  data and can communicate with other agents at discrete time instants
  of their choosing. Our main contribution is the synthesis of a
  distributed dynamics and a set of state-based rules, termed
  triggers, that individual agents use to determine when to
  opportunistically broadcast their state to neighboring agents to
  ensure asymptotic convergence to a solution of the linear program.
  Our technical approach to the algorithm design and analysis
  overcomes a number of challenges, including establishing convergence
  in the absence of a common smooth Lyapunov function, ensuring that
  the triggers are detectable by agents using only local information,
  accounting for asynchronism in the state broadcasts, and ruling out
  various causes of arbitrarily fast state broadcasting. Various
  simulations illustrate our results.
\end{abstract}

\begin{keywords}
  linear programming, distributed algorithms, event-triggered
  communication, multi-agent systems, hybrid systems
\end{keywords}

\begin{AMS}
  90C05, 
  68M14, 
  93C30, 
  65K10, 
  93C65 
\end{AMS}

\section{Introduction}\label{sec:introduction}

The global objective of many multi-agent systems can be formulated as
an optimization problem where the individual agents' states are the
decision variables. Due to the inherent networked structure of these
problems, much research has been devoted to developing local dynamics
for each agent that guarantee that the aggregate of their states
converge to a solution of the optimization problem.  From an analysis
viewpoint, the availability of powerful concepts and tools from
stability analysis makes continuous-time coordination algorithms
appealing. However, their implementation requires the continuous flow
of information among agents. On the other hand, discrete-time
algorithms are amenable to real-time implementation, but the selection
of the stepsizes to guarantee convergence has to take into account
worst-case situations, leading to an inefficient use of the network
resources.  In this paper, we seek to combine the advantages of both
approaches by designing a distributed algorithmic solution to linear
programming in standard form that combines continuous-time computation by
individual agents with opportunistic event-triggered communication
among neighbors.  Our focus on linear programming is motivated by its
importance in mathematical optimization and its pervasiveness in
multi-agent scenarios, with applications to task assignment, network
flow, optimal control, and energy storage, among others.

\emph{Literature review.} The present work has connections with three
main areas: distributed optimization, event-triggered control, and
switched and hybrid systems.  Distributed convex optimization problems
have many applications to networked systems, see
e.g.,~\cite{DPB-JNT:97,MGR-RDN:05,PW-MDL:09}, and this has motivated
the development of a growing body of work that includes
dual-decomposition~\cite{SS-SB-DG:07,LX-MJ-SPB:04}, the alternating
direction method of multipliers~\cite{EW-AO:12}, subgradient
projection algorithms~\cite{BJ-TK-MJ-KHJ:08,AN-AO:09,MZ-SM:13b},
auction algorithms~\cite{DPB-DAC:91}, and saddle-point
dynamics~\cite{DF-FP:10,BG-JC:14-tac}.  The
works~\cite{MB-GN-FB-FA:12,DR-JC:13-tac} propose algorithms
specifically designed for distributed linear
programming. In~\cite{MB-GN-FB-FA:12}, the goal is for agents to agree
on the global solution.  In~\cite{DR-JC:13-tac}, instead, the goal is
for the aggregate of agents' states to converge to a solution. All the
algorithms mentioned above are implemented in either continuous or
discrete time, the latter with time-dependent stepsizes that are
independent of the network state.  Instead, event-triggered control
seeks to opportunistically adapt the execution to the network state by
trading computation and decision-making for less communication,
sensing or actuation effort while guaranteeing a desired level of
performance, see e.g.,~\cite{WPMHH-KHJ-PT:12,XW-MDL:11,MMJ-PT:10}. In
this approach to real-time implementation, a key design objective,
besides asymptotic convergence, is to ensure the lack of an infinite
number of updates in any finite time interval of the resulting
event-triggered strategy. A few
works~\cite{PW-MDL:09,SSK-JC-SM:14-auto} have explored the design of
distributed event-triggered optimization algorithms for multi-agent
systems.  A major difference between event-triggered stabilization and
optimization is that in the former the equilibrium is known a priori,
whereas in the latter the determination of the equilibrium point is
the objective itself.  Finally, our work is related to the literature
on switched and hybrid systems~\cite{DL:03,JPH:04,RG-RGS-ART:12} where
discrete and continuous dynamical components coexist.  To the authors'
knowledge, this work is the first to consider event-triggered
implementations of state-dependent switched dynamical systems. A
unique challenge that must be overcome in this scenario is the fact
that the use of outdated state information may cause the system to
miss a mode switch, and this in turn may affect the overall stability
and performance.

\emph{Statement of contributions.}  The main contribution of the paper
is the design of a provably correct distributed dynamics which,
together with a set of distributed criteria to trigger state
broadcasts among neighbors, enable a group of agents to collectively
solve linear programs in standard form.  Our starting point is the
introduction of a novel distributed continuous-time dynamics for
linear programming based on an exact quadratic regularization and the
characterization of its solutions as saddle points of an augmented
Lagrangian function.  This distributed dynamics is discontinuous in
the agents' state because of the inequality constraints in the
original linear program.  Our approach to synthesize strategies that
rely only on discrete-time communication proceeds by having agents
implement the distributed continuous-time dynamics using a
sample-and-hold value of their neighbors' state. The key challenge is
then to identify suitable criteria to opportunistically determine when
agents should share information with their neighbors in order to
guarantee asymptotic convergence and persistency of the executions.
Because of the technical complexity involved in solving this
challenge, we structure our discussion in two steps, dealing with the
design first of centralized criteria and then of distributed ones.

Under our centralized event-triggered communication scheme, agents use
global knowledge of the network to determine when to synchronously
broadcast their state.  The characterization of the convergence
properties of the centralized implementation is challenging because
the original continuous-time dynamics is discontinuous in the agents'
state and the fact that its final convergence value (being the
solution of the optimization problem) is not known a priori, which
further complicates the identification of a common smooth Lyapunov
function.  Nevertheless, using concepts and tools from switched and
hybrid systems, we are able to overcome these obstacles by introducing
a discontinuous Lyapunov function and examining its evolution during
time intervals where state broadcasts do not occur

We build on our centralized design to synthesize a distributed
event-triggered communication law under which agents use local
information to determine when to broadcast their individual state.
Our strategy to accomplish this is to investigate to what extent the
centralized triggers can be implemented in a distributed way and
modify them when necessary.  In doing so, we face the additional
difficulty posed by the fact that the mode switches associated to the
discontinuity of the original dynamics are not locally detectable by
individual agents.  To address this challenge, we bound the evolution
of the Lyapunov function under mode mismatch and, based on this
understanding, design the distributed triggers so that any potential
increase of the Lyapunov function due to the mismatch is compensated
by the decrease in its value before the mismatch took place.
Moreover, the distributed character of the agent triggers leads to
asynchronous state broadcasts, which poses an additional challenge for
both design and analysis.  Our main result establishes the asymptotic
convergence of the distributed implementation and identifies
sufficient conditions for executions to be persistently flowing (that
is, state broadcasts are separated by a uniform time infinitely
often). We show that the asynchronous state broadcasts cannot be the
cause of non-persistently flowing executions and we conjecture that
all executions are in fact persistently flowing. As a byproduct of using a
hybrid systems modeling framework in our technical approach, we are
also able to guarantee that the global asymptotic stability of the
proposed distributed algorithm is robust to small
perturbations. Finally, simulations illustrate our results.

\emph{Organization.} Section~\ref{sec:prelim} introduces preliminary
notions. Section~\ref{sec:problem-statement} presents the problem
statement and network model. Section~\ref{sec:discontinuous-dynamics}
introduces the continuous-time dynamics on which we base our
event-triggered design.  Sections~\ref{sec:centralized}
and~\ref{sec:distributed} present, respectively, centralized and
distributed event-triggered mechanisms for communication and their
convergence analysis. Section~\ref{sec:sims} provides simulation
results in a multi-agent assignment problem and
Section~\ref{sec:conclusions} gathers our conclusions and ideas for
future work.

\section{Preliminaries} \label{sec:prelim}

This section introduces the notation and some notions from hybrid
systems and optimization employed throughout the paper.

\subsection{Notation}

We let $\reals$ and $\naturals$ denote the set of real and nonnegative
integer numbers, respectively. For a vector $x \in \reals^d$, $x \ge
0$ means that every component of $x$ is non-negative.  For $x \in
\reals^d$, $\NormTwo{x}$ and $\NormInf{x}$ denote its Euclidean and
$\infty$-norm, respectively. For a matrix $A \in \reals^{d_1 \times
  d_2}$, its $i^{\rm th}$ row is $a_{i}$, its $(i,j)$-element is
$a_{i,j}$, and its spectral radius is $\rho(A)$. 
Given sets $S_1,S_2 \subseteq \reals^d$, we let $S_1\setminus S_2$
denote the elements that are in $S_1$ but not in $S_2$. We use
$\interior(S)$ to denote the set of interior points of the set $S
\subseteq \reals^d$. A function $f:X \rightarrow \reals$ is locally
Lipschitz at $x \in X \subset \reals^d$ if there exists some
neighborhood $\mathcal{U}$ of $x$ and constant $K_x \ge 0$ such that
for all $x_1,x_2 \in \mathcal{U}$, it holds that $|f(x_1)-f(x_2)| \le
K_x \NormTwo{x_1-x_2}$. We say that $f$ is locally Lipschitz on $X$ if
it is locally Lipschitz at $x$ for all $x \in X$. The domain of $f$ is
denoted $\dom(f)$. The function $f$ is convex if for all $x_1,x_2 \in
X$ and all $\lambda \in [0,1]$, it holds that $f(\lambda x_1 +
(1-\lambda)x_2) \le \lambda x_1 + (1-\lambda)x_2$. Also, $f$ is
concave if $-f$ is convex. The generalized gradient of $f$ at $\hat{x}
\in X$ is defined as
\begin{align*}
  \partial f(\hat{x}) := \co \bigg \{ \lim_{i \rightarrow \infty} \nabla
  f(x_i) : x_i \rightarrow x, x_i \notin S \cup \Omega_f \bigg \},
\end{align*}
where $\co$ denotes the convex hull, $S$ is a set of measure zero, and
$\Omega_f \subset \reals^d$ is the set of points where $f$ is not
differentiable. If $f$ is locally Lipschitz, its generalized gradient
at any point in $X$ is non-empty. If $g:X \times Y \rightarrow
\reals$, then $\partial_xg(x,y)$ (resp. $\partial_yg(x,y)$) is the
generalized gradient of the map $x \mapsto g(x,y)$ (resp. $y \mapsto
g(x,y)$). For $c \in \reals$, we denote by $f^{-1}(\le c) = \setdef{x
  \in X}{f(x) \le c}$ the $c$-sublevel set of~$f$. Given $V:\reals^d
\rightarrow \reals$ and $f:\reals^d \rightarrow \reals^d$, the Lie
derivative of $V$ along $f$ at $x$ is
\begin{align}\label{eq:Lie}
  \mathcal{L}_f V(x) := \lim_{\alpha \rightarrow 0}\frac{V(x+\alpha
    f(x)) - V(x)}{\alpha}.
\end{align}
We say that $\mathcal{L}_fV(x)$ exists when the limit
in~\eqref{eq:Lie} exist. If $V$ is differentiable, then $\mathcal{L}_f
V(x) = \nabla V(x)^TF(x)$ for $x \in \reals^d$.

\subsection{Hybrid systems}\label{sec:hybrid-systems}

These basic notions on hybrid systems follow closely the exposition
found in~\cite{RG-RGS-ART:12}. A hybrid (or cyber-physical) system is
a dynamical system whose state may evolve according to (i) a
differential equation $\dot{x} = f(x)$ when its state is in some
subset, $C$, of the state-space and (ii) a difference equation $x^+ =
g(x)$ when its state is in some other subset, $D$, of the
state-space. Thus, we may represent a hybrid system by the tuple $H =
(f,g,C,D)$ where $f: \reals^d \rightarrow \reals^d$ (resp. $g:
\reals^d \rightarrow \reals^d$) is called the \emph{flow map}
(resp. \emph{jump map}) and $C \subseteq \reals^d$ (resp. $D \subseteq
\reals^d$) is called the \emph{flow set} (resp. \emph{jump
  set}). Formally speaking, the evolution of the states of $H$ are
governed by the following equations
\begin{subequations} \label{eq:H}
  \begin{align}
    \dot{x} &= f(x), \quad x \in C,
    \\
    x^+ &= g(x), \quad x \in D.
  \end{align}
\end{subequations}

A \emph{compact hybrid time domain} is a subset of $\realsnonnegative
\times \naturals$ of the form
\begin{align*}
  E = \cup_{j=0}^{J-1} ([t_j,t_{j+1}],j),
\end{align*}
for some finite sequence of times $0 = t_0 \le t_1 \le \dots \le
t_J$. It is a hybrid time domain if for all $(T,J) \in E$, $E \cap
([0,T]\times \{0,\dots,J\})$ is a compact hybrid time domain.  A
function $\psi$ is a solution to the hybrid system~\eqref{eq:H} if
\begin{enumerate}
\item for all $j \in \naturals$ such that $I^j := \{t:(t,j) \in
  \dom(\psi)\}$ has non-empty interior
  \begin{alignat*}{2}
    \psi(t,j) &\in C, \quad &&\forall t \in \interior(I^j),
    \\
    \dot{\psi}(t,j) &= f(\psi(t,j)), \quad &&\text{for almost all } t
    \in I^j.
  \end{alignat*}
\item for all $(t,j) \in \dom(\psi)$ such that $(t,j+1) \in
  \dom(\psi)$
  \begin{align*} \psi(t,j) &\in D,
    \\
    \psi(t,j+1) &= g(\psi(t,j)).
  \end{align*}
\end{enumerate}
In (i) above, we say that $\psi$ is \emph{flowing} and in (ii) we say
that $\psi$ is \emph{jumping}. We call $\psi$ \emph{persistently
  flowing} if it is eventually continuous or if there exists a uniform
time constant $\tau_P$ whereby $\psi$ flows for $\tau_P$ seconds
infinitely often. Formally speaking, $\psi$ is persistently flowing if
\begin{enumerate}
\item[(PFi)] $([t_J,\infty),J) \subset \dom(\psi)$ for some $J \in
  \naturals$, or
\item[(PFii)] there exists $\tau_P > 0$ and an increasing sequence
  $\{j_k\}_{k=0}^{\infty} \subset \naturals$ such that
  $([t_{j_k},t_{j_k} + \tau_P],j_k) \subset \dom(\psi)$ for each $k
  \in \naturals$.
\end{enumerate}

\subsection{Quadratic optimization}\label{sec:optimization}

Here we introduce some basic definitions and results regarding
mathematical optimization. A detailed exposition on these topics can
be found in~\cite{SB-LV:09}. First, a quadratic optimization problem
can be denoted by
\begin{subequations}\label{eq:opt}
  \begin{alignat}{2}
    &\min && \quad c^Tx + \frac{1}{2}x^T\mathcal{E}x
    \\
    &\hspace{1.5mm}\text{s.t.} && \quad Ax = b, \quad x \ge 0,
  \end{alignat}
\end{subequations}
where for $n,m \in \naturals$, $c,x \in \reals^n$, $0 \preceq
\mathcal{E} = \mathcal{E}^T \in \reals^{n \times n}$, $b \in
\reals^m$, and $A \in \reals^{m \times n}$. We call~\eqref{eq:opt} the
\emph{primal} problem and its associated \emph{dual} is defined as
\begin{align*}
  \max_{z} q(z),
\end{align*}
where $q:\reals^m \rightarrow \reals$ is given by
\begin{align*}
  q(z) := \min_x \bigg\{-\frac{1}{2}x^T\mathcal{E}x - b^Tz : c +
  \mathcal{E}x + A^Tz \ge 0 \bigg\}.
\end{align*} 
The solutions to the primal and the dual are related through the
so-called Karush-Kuhn-Tucker (KKT) conditions. A point $(x_*,z_*) \in
\reals^n \times \reals^m$ satisfies the KKT conditions
for~\eqref{eq:opt} if
\begin{gather*}
  c + \mathcal{E}x_* + A^Tz_* \ge 0, \quad Ax_* = b, \quad x_* \ge 0,
  \\
  (c + \mathcal{E}x_* + A^Tz_*)^Tx_* = 0.
\end{gather*}
When the primal is feasible with a finite optimal value,
\begin{enumerate}
\item a point $(x_*,z_*)$ satisfies the KKT conditions
  for~\eqref{eq:opt} if and only if $x_*$ (resp. $z_*$) is a
  solution to the primal (resp. the dual)
\item the optimal value of the primal is the optimal value
  of the dual.
\end{enumerate}

\myclearpage
\section{Problem statement and network
  model}\label{sec:problem-statement}

This section introduces the problem of interest.  Our main objective
is to develop a distributed algorithm for multi-agent systems that is
able to solve general linear programs and takes into account the
discrete nature of inter-agent communication. Given $c \in \reals^n$,
$b \in \reals^m$, and $A \in \reals^{m \times n}$, a linear program in
standard form on $\reals^n$ is defined by
\begin{subequations}\label{eq:linprog}
  \begin{alignat}{2}
    &\min && \quad c^Tx
    \\
    &\hspace{1.5mm}\text{s.t.} && \quad Ax = b, \quad x \ge 0.
  \end{alignat}
\end{subequations}
Note that this is a special case of the quadratic
program~\eqref{eq:opt}, where $\mathcal{E} = 0$.  We assume
that~\eqref{eq:linprog} is feasible, with a finite optimal value, and
denote by $\primalsol \subseteq \reals^n$ the set of
solutions. Without loss of generality, we assume that
\begin{align*}
  \text{{\bf SA \#1:}} \quad \rho(A^TA) \le 1.
\end{align*}
We do this for ease of presentation, as this assumption simplifies the
exposition of the technical treatment. Two reasons justify the
generality of \text{{\bf SA \#1}}: the results are easily extensible
to the case $\rho(A^TA) > 1$ and Remark~\ref{rem:da-SA1} later
presents a $O(m)$ distributed algorithm that a multi-agent network can
run to ensure the assumption holds.


We next describe the model for the multi-agent network. Consider a
collection of agents with unique identifiers $i \in
\{1,\dots,n\}$. The state of agent $i$ is $x_i \in \reals$. Each agent
$i$ knows $c_i$ and the non-zero elements of any row $a_\ell$ of $A$
where $a_{\ell,i} \not=0$ (and the associated $b_{\ell}$). In
addition, if $x_i$ and $x_j$ appear in a common constraint, then $i$
and $j$ have the ability to communicate with each other at discrete
instants of time of their choosing. We assume communication happens
instantaneously and denote by $\hat{x}_i$ the last state transmitted
by agent $i$ to its neighboring agents.  Our objective is then to
design a distributed algorithm that specifies \emph{how} agents should
update their own states with the information they possess and
\emph{when} they should broadcast it to neighboring agents with the
ultimate goal of making the aggregate of the agents' states $x =
(x_1,\dots,x_n)$ converge to a solution of~\eqref{eq:linprog}.  To
solve this problem, we take an approach based on continuous-time
computation with discrete-time communication. Formally, we formulate
our approach using the notion of hybrid system described in
Section~\ref{sec:hybrid-systems} as follows.

\begin{problemstatement}\longthmtitle{Distributed linear programming
    with event-triggered communication}\label{problem}
  Design a hybrid system that, for each $i \in \{1,\dots,n\}$, takes
  the form,
  \begin{subequations}\label{eq:problem}
    \begin{alignat}{2}
      \dot{x}_i &= g_i(\hat{x}), \quad &&\text{\rm{if $(x,\hat{x})
          \notin \dtriggerset$,}} \label{eq:problem_flow}
      \\
      \hat{x}_i^+ &= x_i, \quad &&\text{{\rm if $(x,\hat{x}) \in
          \dtriggerset$,}} \label{eq:problem_jump}
    \end{alignat}
  \end{subequations}
  where $g_i:\reals^n \rightarrow \reals$ is agent $i$'s flow map and
  $\dtriggerset \subseteq \reals^n \times \reals^n$ is agent $i$'s
  trigger set, which determines when $i$ should broadcast its state,
  such that
  \begin{enumerate}
  \item $g_i$ is computable by agent $i$ and the inclusion
    $(x,\hat{x}) \in \dtriggerset$ is detectable by agent $i$ using
    local information and communication, and
  \item the aggregate of the agents' states converge to a solution
    of~\eqref{eq:linprog}.
  \end{enumerate}
\end{problemstatement}

The interpretation of Problem~\ref{problem} is as follows.
Equation~\eqref{eq:problem_flow} models the fact that agent $i\in
\until{n}$ uses the last broadcast states from neighboring agents and
itself to compute the continuous-time flow $g_i$ governing the
evolution of its state. In-between two consecutive broadcasts of agent
$i$ (i.e., while flowing), there is no dynamics for its last broadcast
state $\hat{x}_i$. Formally, $\dot{\hat{x}}_i = 0$ if $(x,\hat{x})
\notin \dtriggerset$.  For this reason, the state evolution is quite
easy to compute since it changes according to a constant rate during
continuous flow. Our use of the term ``continuous-time flow'' is
motivated by the fact that we model the event-triggered design in the
hybrid system framework. Moreover, viewing the
dynamics~\eqref{eq:problem_flow} as a continuous-time flow will aid
our analysis in subsequent sections. Equation~\eqref{eq:problem_jump}
models the broadcast procedure. The condition $(x,\hat{x}) \in
\dtriggerset$ is a state-based trigger used by agent $i$ to determine
when to broadcast its current state $x_i$ to its neighbors. Since
communication is instantaneous, $x_i^+=x_i$ if $(x,\hat{x}) \in
\dtriggerset$.  The dynamical coupling between different agents is
through the broadcast states in $\hat{x}$ only. Note that an agent
cannot pre-determine the time of its next state broadcast because it
cannot predict if or when it will receive a broadcast from a
neighbor. For this reason, we call the strategy outlined in
Problem~\ref{problem} \emph{event-triggered} as opposed to
self-triggered.

When we do not specify the continuous-time dynamics of $\hat{x}_i$ the
reader should interpret this to mean that $\dot{\hat{x}}_i = 0$ when
$(x,\hat{x}) \notin \dtriggerset$. Likewise, $x_i^+ = x_i$ when
$(x,\hat{x}) \in \dtriggerset$. This convention holds throughout the
paper.

\myclearpage
\section{Continuous-time computation and
  communication}\label{sec:discontinuous-dynamics}

In this section we introduce a distributed continuous-time algorithm
that requires continuous communication to solve general linear
programs in standard form. We build on this design in the forthcoming
sections to provide an algorithmic solution to Problem~\ref{problem}
that only employs communication at discrete time instants.

We follow a design methodology similar to the one we employed in our
previous work~\cite{DR-JC:13-tac}. The reason for the different
dynamics proposed here has to do with its amenability to
event-triggered optimization and will become clearer in
Section~\ref{sec:centralized}.

\subsection{Solutions of linear program formulated as saddle
  points}\label{sec:q-reg}

The general approach we use for designing the continuous-time dynamics
is to define an augmented Lagrangian function based on a
regularization of the linear program and then derive the natural
saddle-point dynamics for that function. More specifically, consider
the following quadratic regularization of~\eqref{eq:linprog},
\begin{subequations}\label{eq:pert}
  \begin{alignat}{2}
    &\min && \quad \gamma c^Tx + \frac{1}{2} x^Tx
    \\ &\hspace{1.5mm}\text{s.t.} && \quad Ax = b, \quad x \ge 0.
  \end{alignat}
\end{subequations}
where $\gamma \ge 0$. We use the regularization rather than the
original formulation~\eqref{eq:linprog} itself because, as we discuss
later in Remark~\ref{re:motivation-qp}, the resulting saddle-point
dynamics is amenable to event-triggered implementation. The following
result reveals that this regularization is exact for suitable values
of~$\gamma$. The result is a modification of~\cite[Theorem
1]{OLM-RRM:79} for linear programs in standard form,, rather than in
inequality form.

\begin{lemma}\longthmtitle{Exact regularization}\label{lem:pert}
  There exists $\gamma_{\min} > 0$ such that, for $\gamma \ge
  \gamma_{\min}$, the solution to the regularization~\eqref{eq:pert}
  is a solution to the linear program~\eqref{eq:linprog}.
\end{lemma}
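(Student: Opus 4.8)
The plan is to prove this via the theory of exact penalty functions: the idea is that the quadratic term $\frac{1}{2}x^Tx$ in~\eqref{eq:pert} acts as a regularizer, and for small enough $\gamma$ (equivalently, large enough penalty weight on the quadratic), the unique minimizer of~\eqref{eq:pert} is precisely the minimum-norm solution of the linear program~\eqref{eq:linprog}. Dividing the objective of~\eqref{eq:pert} by $\gamma$, I would first rewrite the problem as minimizing $c^Tx + \frac{1}{2\gamma}x^Tx$ subject to $Ax=b$, $x \ge 0$, so that the regularization parameter becomes $\frac{1}{2\gamma}$ and taking $\gamma$ large corresponds to a vanishing regularization.

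First I would establish that, because~\eqref{eq:linprog} is assumed feasible with a finite optimal value, its solution set $\primalsol$ is a nonempty polyhedron. Since the regularized objective in~\eqref{eq:pert} is strictly convex in $x$ (the Hessian is the identity plus zero, hence positive definite) over the same feasible polyhedron, the regularized problem has a unique solution, call it $x(\gamma)$, for every $\gamma \ge 0$. The strategy is then to show that for $\gamma$ sufficiently large, $x(\gamma) \in \primalsol$.

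The key steps are as follows. I would invoke the KKT characterization from Section~\ref{sec:optimization}: writing down the optimality conditions for~\eqref{eq:pert} and comparing them with those of~\eqref{eq:linprog}. Following the approach of~\cite{OLM-RRM:79}, one shows that the minimum-norm element $\bar{x}$ of the solution polyhedron $\primalsol$ itself satisfies the KKT conditions of the regularized problem~\eqref{eq:pert} for all sufficiently large $\gamma$. Concretely, let $\bar{x} = \argmin \setdef{\frac{1}{2}\NormTwo{x}^2}{x \in \primalsol}$. Because $\primalsol$ is defined by the finitely many active constraints at optimality, there is a dual multiplier certificate for $\bar{x}$ as the linear-program solution; the regularized KKT system perturbs this certificate by a term of order $\frac{1}{\gamma}$ coming from the gradient $x$ of the quadratic. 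One then argues that the complementarity and sign conditions are preserved once $\gamma$ exceeds a threshold $\gamma_{\min}$ determined by the geometry of the active constraints (specifically, by how far the relevant reduced costs are bounded away from zero). Uniqueness of the regularized solution then forces $x(\gamma) = \bar{x} \in \primalsol$.

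The main obstacle will be making the threshold argument rigorous without access to strict complementarity or nondegeneracy assumptions, since a generic linear program in standard form may have a degenerate or non-unique solution set. The subtlety is to show that a single finite $\gamma_{\min}$ works uniformly, rather than a $\gamma$ that must grow as one approaches troublesome faces of the feasible region. The resolution, as in~\cite{OLM-RRM:79}, is that the solution polyhedron $\primalsol$ is itself a closed convex set, so its minimum-norm projection $\bar{x}$ is well defined and the associated optimality gap for the remaining (inactive) constraints is bounded below by a strictly positive constant; this constant, together with the diameter of the relevant region, yields the explicit $\gamma_{\min}$. The adaptation from the inequality-form result of~\cite{OLM-RRM:79} to the standard-form problem~\eqref{eq:linprog} is essentially a matter of rewriting the equality constraints $Ax=b$ as the dual variable $z$ being free, which changes the sign conditions in the KKT system but not the structure of the penalty argument.
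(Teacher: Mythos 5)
Your overall strategy---targeting the minimum-norm element $\bar{x}$ of $\primalsol$, showing it satisfies the KKT conditions of the regularized problem~\eqref{eq:pert} beyond a threshold $\gamma_{\min}$, and invoking uniqueness, all adapted from~\cite{OLM-RRM:79}---is exactly the paper's route. But the mechanism you sketch for establishing the threshold fails. If you keep a \emph{fixed} LP dual certificate $z_*$ and treat the regularized stationarity condition (in your normalization) as the perturbation $c + \tfrac{1}{\gamma}\bar{x} + A^Tz_* \ge 0$, then the sign conditions do survive for large $\gamma$ (the perturbation $\bar{x}/\gamma$ vanishes off the support of $\bar{x}$), but complementarity fails for \emph{every} finite $\gamma$: for any $i$ with $\bar{x}_i > 0$, complementary slackness of the LP gives $(c + A^Tz_*)_i = 0$, so your perturbed certificate yields $(c + \tfrac{1}{\gamma}\bar{x} + A^Tz_*)_i = \bar{x}_i/\gamma > 0$, whereas the regularized KKT conditions require exact equality there. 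No threshold on $\gamma$ repairs this; the dual variable itself must be corrected, not merely tolerated as perturbed. Moreover, your fallback claim that the ``optimality gap for the remaining constraints is bounded below by a strictly positive constant'' is false for degenerate programs: if $\bar{x}_i = 0$ but $x_i' > 0$ for some other optimal $x'$, then \emph{every} dual optimal $z$ has $(c+A^Tz)_i = 0$, so no uniform positive margin exists. This is precisely the degeneracy obstacle you flag, and your proposed resolution does not resolve it (and the ``diameter of the relevant region'' plays no role in the correct argument).

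The missing ingredient---and what the paper actually does---is to introduce the auxiliary problem~\eqref{eq:pert_proof} of minimizing $\frac{1}{2}x^Tx$ over $\setdef{x}{Ax=b,\ c^Tx = p,\ x \ge 0}$, where $p$ is the LP optimal value, and to extract its KKT multipliers $(\bar{z},\bar{w})$ from~\eqref{eq:pert_proof_KKT}; the scalar $\bar{w}$, the multiplier attached to the extra equality $c^Tx = p$, is what produces the threshold. Setting $\gamma_{\min} = |\bar{w}|$ (any positive value if $\bar{w}=0$) and writing $\gamma = \bar{w} + \eta$ with $\eta \ge 0$ (case $\bar{w} > 0$; the case $\bar{w}<0$ is symmetric), the combined multiplier $z = \bar{z} + \eta z_*$ satisfies the regularized KKT conditions \emph{exactly}, with no perturbation estimate at all: $\gamma c + \bar{x} + A^T(\bar{z} + \eta z_*) = (\bar{w}c + \bar{x} + A^T\bar{z}) + \eta(c + A^Tz_*) \ge 0$ because each summand is nonnegative, and complementarity holds because each summand is orthogonal to $\bar{x}$---the first by~\eqref{eq:pert_proof_KKT}, the second since $(c+A^Tz_*)^T\bar{x} = p + z_*^Tb = 0$, using $z_*^Tb = -p$ from~\eqref{eq:linprog_KKT}. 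With this certificate construction in place of your perturbation argument, your uniqueness observation correctly forces $x(\gamma) = \bar{x} \in \primalsol$, and the rest of your outline stands.
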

\begin{proof}
  We use the fact that a point $x_* \in \reals^n$ (resp. $z_* \in
  \reals^m$) is a solution to~\eqref{eq:linprog} (resp. the dual
  of~\eqref{eq:linprog}) if and only if it satisfies the KKT
  conditions for~\eqref{eq:linprog},
  \begin{align}\label{eq:linprog_KKT} 
    c + A^Tz_* \ge 0, \quad A x_* = b, \quad x_* \ge 0, \quad (c +
    A^Tz_*)^Tx_* = 0 .
  \end{align}
  We also consider the optimization problem
  \begin{subequations}\label{eq:pert_proof}
    \begin{alignat}{2}
      &\min && \quad \frac{1}{2} x^Tx \\ &\hspace{1.5mm}\text{s.t.} &&
      \quad Ax = b, \quad c^Tx = p, \quad x \ge 0,
    \end{alignat}
  \end{subequations}
  where $p$ is the optimal value of~\eqref{eq:linprog}. Note that the
  solution to the above problem is a solution to~\eqref{eq:linprog} by
  construction of the constraints. Likewise,
  $(\bar{x},\bar{z},\bar{w}) \in \reals^n \times \reals^m \times
  \reals$ are primal-dual solutions to~\eqref{eq:pert_proof} if and
  only if they satisfy the KKT conditions for~\eqref{eq:pert_proof}
  \begin{align}
      \bar{x} + A^T \bar{z} + c \bar{w} \ge 0, \quad A \bar{x} = b,
      \quad c^T\bar{x} = p, \quad \bar{x} \ge 0, \quad  (\bar{x} + A^T
      \bar{z} + c \bar{w})^T\bar{x} = 0. \label{eq:pert_proof_KKT}
  \end{align}
  Since $\bar{x}$ is a solution to~\eqref{eq:linprog}, without loss of
  generality, we suppose that $x_* = \bar{x}$. We consider the cases
  when (i) $\bar{w} = 0$, (ii) $\bar{w} > 0$, and (iii) $\bar{w} <
  0$. In case (i), combining~\eqref{eq:linprog_KKT}
  and~\eqref{eq:pert_proof_KKT}, one can obtain for any $\gamma \ge
  0$,
  \begin{align*}
    \gamma c + x_* + A^T(\gamma z_* + \bar{z}) \ge
    0, \quad A x_* = b, \quad x_* \ge 0, \quad (\gamma c + x_* +
    A^T(\gamma z_* + \bar{z}))^Tx_* = 0.
  \end{align*}
  The above conditions reveal that $(x_*,\gamma z_* + \bar{z})$
  satisfy the KKT conditions for~\eqref{eq:pert}. Thus, $x_*$ (which
  is a solution to~\eqref{eq:linprog}) is the solution
  to~\eqref{eq:pert} and this would complete the proof. Next, consider
  case (ii). If $\gamma = \gamma_{\min} := \bar{w} > 0$, the
  conditions~\eqref{eq:pert_proof_KKT} can be manipulated to give
  \begin{align*}
    \gamma c + x_* + A^T\bar{z} \ge 0, \quad A x_* = b,
    \quad x_* \ge 0, \quad (\gamma c + x_* +
    A^T\bar{z})^Tx_* = 0.
  \end{align*}
  This means that $(x_*,\bar{z})$ satisfy the KKT conditions
  for~\eqref{eq:pert}. Thus, $x_*$ (which is a solution
  to~\eqref{eq:linprog}) is the solution to~\eqref{eq:pert} and this
  would complete the proof. Now, for any $\gamma \ge \gamma_{\min}$,
  there exists an $\eta \ge 0$ such that $\gamma = \gamma_{\min} +
  \eta = \bar{w} + \eta$. Combining~\eqref{eq:linprog_KKT}
  and~\eqref{eq:pert_proof_KKT}, one can obtain
  \begin{gather*}
    \gamma c + x_* + A^T(\eta z_* + \bar{z}) \ge 0, \quad A x_* = b,
    \quad x_* \ge 0
    \\
    (\gamma c + x_* + A^T(\eta z_* + \bar{z})^Tx_* = 0.
  \end{gather*}
  This means that $(x_*,\eta z_* + \bar{z})$ satisfy the KKT
  conditions for~\eqref{eq:pert}. Thus, $x_*$ (which is a solution
  to~\eqref{eq:linprog}) is the solution to~\eqref{eq:pert} and this
  would complete the proof. Case (iii) can be considered analogously
  to case (ii) with $\gamma_{\min} := -\bar{w}$. This completes the
  proof.
\end{proof}

The actual value of $\gamma_{\min}$ in Lemma~\ref{lem:pert} is
somewhat generic in the following sense: if one replaces $c$
in~\eqref{eq:pert} by $\bar{\gamma} c$ for some $\bar{\gamma} > 0$,
then the regularization is exact for $\gamma \ge
\frac{\gamma_{\min}}{\bar{\gamma}}$. Therefore, to ease notation, we
make the following standing assumption,
\begin{align*}
  \text{{\bf SA \#2:}} \quad \gamma_{\min} \le 1.
\end{align*}
This justifies our focus on the case $\gamma = 1$.  Our next result
establishes a correspondence between the solution of~\eqref{eq:pert}
and the saddle points of an associated augmented Lagrangian function.

\begin{lemma}\longthmtitle{Solutions of~\eqref{eq:pert} as saddle
    points}\label{lem:lagrangian}
  For $K \ge 0$, consider the augmented Lagrangian function
  $\map{L^K}{\reals^n \times \reals^m}{\reals}$ associated to the
  optimization problem~\eqref{eq:pert} with $\gamma =1$,
  \begin{align*}
    L^{K}(x,z) &= c^Tx + \frac{1}{2} x^Tx +
    \frac{1}{2}(Ax-b)^T(Ax-b)  + z^T(Ax-b) + K
    \mathbbm{1}^T \max \{0,-x\}.
  \end{align*}
  Then, $L^{K}$ is convex in $x$ and concave in $z$. Let $x_* \in
  \reals^n$ (resp. $z_* \in \reals^m$) be the solution
  to~\eqref{eq:pert} (resp. a solution to the dual
  of~\eqref{eq:pert}). Then, for $K > \NormInf{c+ x_* + A^Tz_*}$, the
  following holds,
  \begin{enumerate}
  \item $(x_*,z_*)$ is a saddle point of $L^{K}$,
  \item if $(\bar{x},\bar{z})$ is a saddle point of $L^{K}$, then
    $\bar{x} = x_*$ is the solution of~\eqref{eq:pert}.
  \end{enumerate}
\end{lemma}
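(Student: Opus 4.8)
The plan is to handle the two preliminary claims (convexity/concavity and the saddle-point equivalence) first, and then establish items (i) and (ii) by exploiting the special structure of $L^K$. For the convexity/concavity claim I would observe that $L^K$ is affine in $z$, since only the term $z^T(Ax-b)$ depends on $z$ and it does so linearly, so $z \mapsto L^K(x,z)$ is concave; and that $x \mapsto L^K(x,z)$ is a nonnegative combination of convex functions, because $c^Tx$ and $z^T(Ax-b)$ are affine, $\tfrac12 x^Tx$ and $\tfrac12(Ax-b)^T(Ax-b)$ are convex quadratics, and each $\max\{0,-x_i\}$ is a pointwise maximum of two affine functions and hence convex. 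Crucially, the $\tfrac12 x^Tx$ term makes $x \mapsto L^K(x,z)$ \emph{strictly} convex, a fact I reserve for item (ii).

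Because $L^K$ is convex in $x$ and concave (indeed affine) in $z$, a pair $(\bar x,\bar z)$ is a saddle point if and only if $\bar x$ minimizes $L^K(\cdot,\bar z)$ and $\bar z$ maximizes $L^K(\bar x,\cdot)$. I would encode these as (a) $0 \in \partial_x L^K(\bar x,\bar z)$ and (b) $A\bar x - b = 0$: condition (b) is the stationarity of the affine map $z \mapsto L^K(\bar x,z)$, whose gradient is $A\bar x-b$, since if this were nonzero the map would be unbounded above and admit no maximizer. The generalized gradient in (a) is $\partial_x L^K(x,z) = c + x + A^T(Ax-b) + A^Tz + K\,\partial_x\big(\mathbbm{1}^T\max\{0,-x\}\big)$, where componentwise the subdifferential of $\max\{0,-x_i\}$ equals $\{0\}$ if $x_i>0$, $\{-1\}$ if $x_i<0$, and $[-1,0]$ if $x_i=0$.

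For item (i), I would verify that $(x_*,z_*)$ meets (a) and (b). Condition (b) is immediate from the primal feasibility $Ax_*=b$ in the KKT conditions for~\eqref{eq:pert}; this also renders $L^K(x_*,\cdot)$ constant, so $z_*$ trivially maximizes it. For condition (a), I set $d := c + x_* + A^Tz_*$ and, using $Ax_*=b$, reduce to checking $0 \in d_i + K\,\partial\max\{0,-x_i\}\big|_{x_{*,i}}$ componentwise. Where $x_{*,i}>0$, complementary slackness $d^Tx_*=0$ together with $d\ge 0$ forces $d_i=0$, matching the subdifferential $\{0\}$. Where $x_{*,i}=0$, the inclusion becomes $0\in[d_i-K,d_i]$, which holds precisely because $d_i\ge 0$ and $d_i\le\NormInf{d}<K$ by hypothesis; the case $x_{*,i}<0$ is excluded by $x_*\ge 0$. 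This is the step where the threshold $K>\NormInf{c+x_*+A^Tz_*}$ enters, and I expect it to be the main obstacle of the proof, as it is exactly what forces the exact penalty to hold the minimizer at the active boundary.

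For item (ii), rather than a direct case analysis, which would require separately ruling out $\bar x_i<0$, I would invoke the interchange property of saddle points of a convex-concave function: given the saddle point $(x_*,z_*)$ from item (i) and an arbitrary saddle point $(\bar x,\bar z)$, chaining the four saddle inequalities shows that the mixed pair $(\bar x,z_*)$ is also a saddle point, so $\bar x$ minimizes $L^K(\cdot,z_*)$. Since $x\mapsto L^K(x,z_*)$ is strictly convex, its minimizer is unique, and $x_*$ is such a minimizer by item (i); hence $\bar x = x_*$. Finally, $x_*$ solves~\eqref{eq:pert} by construction, which completes the argument.
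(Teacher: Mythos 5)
Your proof is correct, and it takes a genuinely different route from the paper's, most notably in part (ii). For (i), the paper does not pass through first-order conditions: it verifies the saddle inequality directly via the chain of bounds \eqref{eq:lagrangian_bound}, expanding $L^{K}(x,z_*)$ and using the KKT conditions of \eqref{eq:pert} together with the bound $K\mathbbm{1}^T\max\{0,-x\} \ge -(c+x_*+A^Tz_*)^Tx$; your componentwise subgradient check $0 \in \partial_x L^{K}(x_*,z_*)$ uses the same ingredients (complementary slackness where $x_{*,i}>0$, the threshold where $x_{*,i}=0$) but localizes the role of $K$ to the inclusion $0 \in [d_i-K,d_i]$ on active coordinates, which is tidier bookkeeping. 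For (ii), the paper extracts $A\bar{x}=b$ and the inequality \eqref{eq:saddle_point_contradiction} from the saddle property and then rules out $\bar{x} \not\ge 0$ by a second inequality chain ending in a strict contradiction---this is the one place where the strictness of $K > \NormInf{c+x_*+A^Tz_*}$ is actually invoked---before concluding that $\bar{x}$ solves \eqref{eq:pert}. You replace all of this with the interchange property of saddle points, and your four-inequality chain does close: $L^{K}(x_*,\bar{z}) \le L^{K}(x_*,z_*) \le L^{K}(\bar{x},z_*) \le L^{K}(\bar{x},\bar{z}) \le L^{K}(x_*,\bar{z})$ forces all values equal, so $(\bar{x},z_*)$ inherits both saddle inequalities, and strict convexity of $x \mapsto L^{K}(x,z_*)$ (from the $\tfrac{1}{2}x^Tx$ term) gives $\bar{x}=x_*$. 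What each approach buys: the paper's case analysis is self-contained and exhibits concretely how the penalty term forces $\bar{x} \ge 0$; your argument is shorter, avoids the case split, cleanly separates where each hypothesis enters (the threshold only in (i), strict convexity only in (ii)), and in fact establishes the lemma under the weaker hypothesis $K \ge \NormInf{c+x_*+A^Tz_*}$, since your subgradient inclusion only needs the nonstrict inequality and strictness is never used afterwards, whereas the paper's contradiction argument genuinely requires it.
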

\begin{proof}
  For any $x \in \reals^n$,
  \begin{align}\label{eq:lagrangian_bound}
    L^{K}(x,z_*) &= c^Tx + \frac{1}{2} x^Tx + \frac{1}{2}(Ax-b)^T(Ax-b) +
    z_*^T(Ax-b) + K \mathbbm{1}^T
    \max\{0,-x\} \notag
    \\
    &\ge c^Tx + \frac{1}{2} x^Tx + z_*^T(Ax-b) + \NormInf{c+ x_* +
      A^Tz_*} x \nonumber
    \\
    &\ge c^Tx + \frac{1}{2} x^Tx + z_*^T(Ax-b) - (c+ x_* + A^Tz_*)^T x
    \nonumber
    \\
    &\ge c^Tx + \frac{1}{2} x^Tx + z_*^TA(x-x_*) - (c+ x_* + A^Tz_*)^T
    (x - x_*) \nonumber
    \\
    &\ge c^Tx + \frac{1}{2} x^Tx - (c + x_*)^T(x-x_*) \nonumber
    \\
    &\ge c^Tx_* + \frac{1}{2} (x-x_*)^T(x-x_*) + \frac{1}{2} x_*^Tx_*
    \nonumber
    \\
    &\ge c^Tx_* + \frac{1}{2} x_*^Tx_* =
    L^{K}(x_*,z_*).
  \end{align}
  For any $z$, it is easy to see that $L^{K}(x_*,z) =
  L^{K}(x_*,z_*)$. Thus $(x_*,z_*)$ is a saddle point of~$L^{K}$.

  Let us now prove (ii). As a necessary condition for
  $(\bar{x},\bar{z})$ to be a saddle point of $L^{K}$, it must be that
  $\partial_zL^{K}(\bar{x},\bar{z}) = A\bar{x}-b = 0$ as well as
  $L^{K}(x_*,\bar{x}) \ge L^{K}(\bar{x},\bar{z})$ which means that
  \begin{align}\label{eq:saddle_point_contradiction}
    &c^Tx_* +
    \frac{1}{2} x_*^Tx_* \ge c^T \bar{x} + \frac{1}{2}
    \bar{x}^T\bar{x} + K \mathbbm{1}^T\max\{0,-\bar{x}\} .
  \end{align}
  If $\bar{x} \ge 0$ then $c^Tx_* + \frac{1}{2} x_*^Tx_* \ge
  c^T\bar{x} + \frac{1}{2} \bar{x}^T\bar{x} $ and thus $\bar{x}$ would
  be a solution to~\eqref{eq:pert}. Consider then that $\bar{x} \not
  \ge 0$. Then,
  \begin{align*}
    c^T\bar{x} + \frac{1}{2} \bar{x}^T\bar{x} &= c^Tx_* + \frac{1}{2}
    x_*^Tx_* + (c+x_*)^T(\bar{x}-x_*) + \frac{1}{2} (\bar{x} -
    x_*)^T(\bar{x} - x_*)
    \\
    & \ge c^Tx_* + \frac{1}{2} x_*^Tx_* + (c + x_*)^T(\bar{x}-x_*)
    \\
    & \ge c^Tx_* + \frac{1}{2} x_*^Tx_* - z_*^TA(\bar{x}-x_*) + (c +
    x_* + A^Tz_*)^T(\bar{x}-x_*)
    \\
    & \ge c^Tx_* + \frac{1}{2} x_*^Tx_* - z_*^T(A\bar{x}-b) + (c + x_*
    + A^Tz_*)^T \bar{x}
    \\
    & \ge c^Tx_* + \frac{1}{2} x_*^Tx_* - \NormInf{c + x_* + A^Tz_*}
    \max\{0,-\bar{x}\}
    \\
    & > c^Tx_* + \frac{1}{2} x_*^Tx_* - K
    \mathbbm{1}^T\max\{0,-\bar{x}\} ,
  \end{align*}
  contradicting~\eqref{eq:saddle_point_contradiction}. Thus, $\bar{x}
  \ge 0$ and must be the solution to~\eqref{eq:pert}. 
\end{proof}

\subsection{Distributed continuous-time dynamics}

Given the results in Lemmas~\ref{lem:pert} and~\ref{lem:lagrangian}, a
sensible strategy to find a solution of~\eqref{eq:linprog} is to
employ the saddle-point dynamics associated to the augmented
Lagrangian function~$L^K$. Formally, we set
\begin{subequations}\label{eq:saddle-point-dyn}
  \begin{align}
    \dot{x} &\in -\partial_xL^{K}(x,z),
    \\
    \dot{z} &=
    \partial_zL^{K}(x,z).
  \end{align}
\end{subequations}
This dynamics is well-defined since $L^K$ is a locally Lipschitz
function. For an appropriate choice of the parameter $K$, one can
establish the asymptotic convergence of trajectories of the
saddle-point dynamics to a point in the set $\primalsol \times
\reals^m$. However, we build the event-triggered implementation on a
different dynamics that does not require precise knowledge of $K$. The
following result reveals that one can characterize certain elements of
the saddle-point dynamics, regardless of $K$, which will allow us to
design a discontinuous dynamics later.
 
\begin{lemma}\longthmtitle{Generalized gradients of the
    Lagrangian}\label{lem:grad}
  Let $\map{\nomflow}{\reals^n \times \reals^m}{\reals^n}$ be defined
  by $ \nomflow(x,z) = -(A^Tz+c + x)-A^T(Ax-b)$. Given a compact set
  $X \times Z \subset \reals^n_{\ge 0} \times \reals^m$, let
  \begin{align*}
    \Kbound (X \times Z) := \max_{(x,z) \in X \times Z}
    \|\nomflow(x,z)\|_{\infty}.
  \end{align*}
  Then, if $ K \ge \Kbound (X \times Z)$, for each $(x,z) \in X \times
  Z$, $\partial_zL^{K}(x,z) = \{Ax - b\}$ and there exists $a \in
  -\partial_xL^{K}(x,z) \subset \reals^n$ such that, for each $i \in
  \{1,\dots,n\}$,
  \begin{alignat*}{2}
    a_i &= \begin{cases} \nomflow_i(x,z), \hspace{14.2mm}\text{{\rm
          if} $x_i > 0$} ,
      \\
      \max\{0,\nomflow_i(x,z)\}, \; \text{{\rm if} $x_i = 0$} .
    \end{cases}
  \end{alignat*}
\end{lemma}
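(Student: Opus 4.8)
The plan is to handle the two generalized gradients separately and, for the $x$-component, to reduce everything to a coordinatewise computation that exploits the separable structure of the nonsmooth term. The $z$-component is immediate: the map $z \mapsto L^{K}(x,z)$ is affine, since its only $z$-dependence is the term $z^T(Ax-b)$, hence continuously differentiable, so $\partial_zL^{K}(x,z) = \{\nabla_zL^{K}(x,z)\} = \{Ax-b\}$ with no restriction on $K$.

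For the $x$-component, I would split $x \mapsto L^{K}(x,z)$ into a smooth part $S(x) := c^Tx + \frac{1}{2}x^Tx + \frac{1}{2}(Ax-b)^T(Ax-b) + z^T(Ax-b)$ and a nonsmooth part $N(x) := K\mathbbm{1}^T\max\{0,-x\}$. A direct computation gives $\nabla S(x) = c + x + A^T(Ax-b) + A^Tz = -\nomflow(x,z)$. Since $S$ is $C^1$, the Clarke sum rule holds with equality, so $\partial_xL^{K}(x,z) = -\nomflow(x,z) + \partial N(x)$. Because $N$ is separable and convex, $\partial N(x)$ is the Cartesian product of the one-dimensional subdifferentials of $t \mapsto K\max\{0,-t\}$, namely $\{0\}$ if $x_i > 0$, $\{-K\}$ if $x_i < 0$, and $[-K,0]$ if $x_i = 0$. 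Reading off $-\partial_xL^{K}(x,z)$ coordinatewise and using that $X \subseteq \reals^n_{\ge 0}$, so that only $x_i \ge 0$ arise, the $i$-th coordinate of $-\partial_xL^{K}(x,z)$ is the singleton $\{\nomflow_i(x,z)\}$ when $x_i > 0$ and the interval $[\nomflow_i(x,z),\nomflow_i(x,z)+K]$ when $x_i = 0$.

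It then remains to exhibit a single selection $a \in -\partial_xL^{K}(x,z)$ realizing the claimed values. For $x_i > 0$ there is no choice and necessarily $a_i = \nomflow_i(x,z)$. For $x_i = 0$ I would verify that $\max\{0,\nomflow_i(x,z)\}$ lies in $[\nomflow_i(x,z),\nomflow_i(x,z)+K]$: if $\nomflow_i(x,z) \ge 0$ this value is exactly the left endpoint, while if $\nomflow_i(x,z) < 0$ it equals $0$ and one needs $-\nomflow_i(x,z) \le K$. This last inequality is precisely where the hypothesis on $K$ enters, since $|\nomflow_i(x,z)| \le \NormInf{\nomflow(x,z)} \le \Kbound(X \times Z) \le K$. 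Defining $a_i$ to be this value in each coordinate produces the required element. The only delicate points are the equality in the sum rule (guaranteed by smoothness of $S$) and the use of $K \ge \Kbound(X \times Z)$ to keep $\max\{0,\nomflow_i(x,z)\}$ inside the available interval at the boundary $x_i = 0$; everything else is routine calculus of generalized gradients.
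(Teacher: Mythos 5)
Your proposal is correct and takes essentially the same route as the paper's proof: both reduce $\partial_xL^{K}(x,z)$ to the smooth gradient $-\nomflow(x,z)$ plus the coordinatewise subdifferential of $K\mathbbm{1}^T\max\{0,-x\}$ (the paper phrases this as the membership relation $-a + \nomflow(x,z) \in K\partial\max\{0,-x\}$ rather than invoking the Clarke sum rule by name), and both use $K \ge \Kbound(X \times Z)$ in exactly the same place, namely to certify that $a_i = 0 = \max\{0,\nomflow_i(x,z)\}$ is an admissible selection when $x_i = 0$ and $\nomflow_i(x,z) < 0$. The differences are cosmetic, so no further comparison is needed.
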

\begin{proof}
  Let $(x,z) \in \reals^n \times \reals^m $. It is straightforward to
  see that $ \partial_zL^{K}(x,z) = \{ Ax-b \}$ for any $K$.  Next,
  note that for any $a \in -\partial_xL^{K}(x,z)$, we have
  \begin{align}\label{eq:subset}
    -a - (A^Tz + c + x) - A^T(Ax-b) \in K \partial \max \{0,-x\} ,
  \end{align}
  or, equivalently, $-a + \nomflow(x,z) \in K \partial \max \{0,-x\}$.
  For any $i \in \{1,\dots,n\}$ such that $x_i > 0$, the corresponding
  set in the right-hand side of~\eqref{eq:subset} is the singleton $0$
  and therefore $ a_i = \nomflow_i(x,z)$.  On the other hand, if $x_i
  = 0$, then
  \begin{align*}
    -a_i + \nomflow_i(x,z) \in [-K,0].
  \end{align*}
  If $\nomflow_i(x,z) \ge 0$, the choice $a_i = \nomflow_i(x,z)$
  satisfies the equation. Conversely, if $\nomflow_i(x,z) < 0$, then
  $a_i = 0$ satisfies the equation for all $K \ge \Kbound(X \times Z)$
  by definition of~$\Kbound(X \times Z)$.
  This completes the proof.
\end{proof}

As suggested previously, the above result enables us to propose an
alternative continuous-time (discontinuous) dynamics to
solve~\eqref{eq:linprog} that does not require knowledge
of~$K$. Specifically, Lemma~\ref{lem:grad} implies that, on a compact
set, the trajectories of the dynamics
\begin{subequations}\label{eq:disc-dyn}
  \begin{align}
    \dot{x}_i &=
    \begin{cases}
      \nomflow_i(x,z), & \text{if $x_i > 0$} ,
      \\
      \max\{0,\nomflow_i(x,z)\}, &  \text{if $x_i = 0$} ,
    \end{cases}
    \label{eq:x-dyn}
    \\
    \dot{z} &= Ax-b ,
    \label{eq:z-dyn}
  \end{align}
\end{subequations}
with $ i \in \{1,\dots,n\}$, are trajectories
of~\eqref{eq:saddle-point-dyn}. That is, any bounded trajectory
of~\eqref{eq:disc-dyn} is also a trajectory
of~\eqref{eq:saddle-point-dyn}. As a consequence, the convergence
properties of the saddle-point dynamics are also inherited
by~\eqref{eq:disc-dyn} and this is the dynamics that we design an
event-triggered implementation for in the next~sections. Besides the
standard considerations in designing an event-triggered implementation
(such as ensuring convergence and preventing arbitrarily fast
broadcasting), we face several unique challenges including the fact
that the equilibria of~\eqref{eq:disc-dyn} are not known a priori as
well as having to account for the switched nature of the dynamics.

We conclude this section by discussing the distributed implementation
of~\eqref{eq:disc-dyn} based on the network model of
Section~\ref{sec:problem-statement}. 

\begin{remark}\longthmtitle{Distributed implementation via virtual
    agents}\label{rem:distributed-implementation}
  {\rm Note that the dynamics~\eqref{eq:disc-dyn} possess auxiliary
    variables $z \in \reals^m$ corresponding to the Lagrange
    multipliers of the optimization problem. For the purpose of
    analysis, we consider $m$ virtual agents with identifiers
    $\{n+1,\dots,n+m\}$, where virtual agent $n+\ell$ updates the
    state $z_{\ell} \in \reals$ and has knowledge of the data
    $a_{\ell} \in \reals^n $ and $b_{\ell} \in \reals$. It is
    important to observe that, in an actual implementation, the state
    and dynamics of virtual agent $n+\ell$ can be stored and
    implemented by any of the real agents with knowledge of $a_{\ell}$
    and $b_{\ell}$. For each $\ell \in \{1,\dots,m\}$, the set of
    agents
    \begin{align*}
      \{ i \in \{1,\dots,n\} : a_{\ell,i} \not=0 \} \cup
      \{ n+\ell \},
    \end{align*}
    can communicate their state information to each other. In words,
    if $x_i$ and $x_j$ appear in constraint $\ell$, then agents $i,j$,
    and $n+\ell$ can communicate with each other.  The \emph{neighbor}
    set of~$i$, denoted $\neigh_i$, is the set of all agents that $i$
    can communicate with. The set of real (resp. virtual) neighbors of
    $i$ is $\neigh_i^x := \neigh_i \cap \{1,\dots,n\}$
    (resp. $\neigh_i^z := \neigh_i \cap \{n+1,\dots,n+m\}$). Under
    these assumptions, it is straightforward to verify that agent $i
    \in \{1,\dots,n\}$ can compute $\nomflow_i(x,z)$ using local
    information and can thus implement~\eqref{eq:x-dyn}. Likewise, a
    virtual agent $n+\ell \in \{n+1,\dots,n+m\}$ can compute and
    implement its corresponding dynamics $\dot{z}_{\ell} =
    a_{\ell}^Tx-b_{\ell}$ in~\eqref{eq:z-dyn}. The network structure
    described here is quite natural for many real-world network
    optimization problems that can be formulated as linear programs.}
  \oprocend
\end{remark}

\begin{remark}\longthmtitle{Distributed algorithm to enforce {\bf{SA
        \#1}}}\label{rem:da-SA1}
  {\rm Given the model described in
    Remark~\ref{rem:distributed-implementation}, we explain briefly
    here how agents can implement a simple pre-processing algorithm
    based on $\max$-consensus to ensure that $\rho(A^TA) \le 1$. For
    each row $a_{\ell}$ of the matrix $A$, the virtual agent $n +
    \ell$ can compute the $\ell^{\operatorname{th}}$ row of
    $A^TA$. Then, this agent stores the following estimate of the
    spectral radius,
    \begin{align*}
      \hat{\rho}_{n + \ell} = (A^TA)_{(\ell,\ell)} + \sum_{\substack{i
          \in \{1,\dots,n\} \setminus \{\ell\}}} |(A^TA)_{(\ell,i)}| .
    \end{align*}
    The virtual agents use these estimates as an initial point in the
    standard $\max$-consensus algorithm~\cite{JC:08-auto}.  In $O(m)$
    steps, the $\max$-consensus converges to a point $\rho_* \ge
    \rho(A^TA)$, where the inequality is a consequence of the
    Gershgorin Circle Theorem~\cite[Corollary 6.1.5]{RAH-CRJ:85}.
    Then, each virtual agent scales its corresponding row of $A$ and
    entry of $b$ by $1 / \rho_*$, and communicates this new data to
    its neighbors. The resulting linear program is $\min \{c^Tx :
    \tilde{A}x = \tilde{b}, \; x \ge 0\}$, with $\tilde{A} = A /
    \rho_*$ and $\tilde{b} = b / \rho_*$. Both the solutions and
    optimal value of the new linear program are the same as the
    original linear program and, by construction,
    $\rho(\tilde{A}^T\tilde{A}) \le 1$. \oprocend}
\end{remark}

\myclearpage
\section{Algorithm design with centralized event-triggered
  communication}\label{sec:centralized}

Here, we build on the discussion of
Section~\ref{sec:discontinuous-dynamics} to address the main objective
of the paper as outlined in Problem~\ref{problem}. Our starting point
is the distributed continuous-time algorithm~\eqref{eq:disc-dyn},
which requires continuous-time communication.  Our approach is divided
in two steps because of the complexity of the challenges (e.g.,
asymptotic convergence, asynchronism, and Zeno behavior) involved in
going from continuous-time to opportunistic discrete-time
communication.  In this section, we design a centralized
event-triggered scheme that the network can use to determine in an
opportunistic way when information should be updated. In the next
section, we build on this development to design a distributed
event-triggered communication scheme that individual agents can employ
to determine when to share information with their neighbors.

The problem we solve in this section can be formally stated as
follows.

\begin{problemstatement}\longthmtitle{Linear programming with
    centralized event-triggered communication}\label{problem2}
  Identify a set $\ctriggerset \subseteq \reals^n_{\ge 0} \times
  \reals^m \times \reals^n_{\ge 0} \times \reals^m$ such that the
  hybrid system that, for $i \in \{1,\dots,n\}$, takes the form,
  \begin{subequations}\label{eq:hybrid2}
    \begin{align}
      \dot{x}_i &=
      \begin{cases}
        \nomflow_i(\hat{x},\hat{z}), &\hat{x}_i > 0,
        \\
        \max \{0,\nomflow_i(\hat{x},\hat{z})\}, &\hat{x}_i =
        0, \end{cases}
       \label{eq:hybrid2_contx} 
       \\
       \dot{z} &= A\hat{x}-b, \label{eq:hybrid2_contz}
     \end{align}
     if $(x,z,\hat{x},\hat{z}) \not \in \ctriggerset$ and
    \begin{align}
      (\hat{x}^+,\hat{z}^+) = (x,z),  \label{eq:hybrid2_disc}
    \end{align}
  \end{subequations}
  if $(x,z,\hat{x},\hat{z}) \in \ctriggerset$, makes the aggregate $x
  \in \reals^n$ of the real agents' states converge to a solution of
  the linear program~\eqref{eq:linprog}.
\end{problemstatement}

We refer to the set $\ctriggerset$ in Problem~\ref{problem2} as the
\emph{centralized trigger set}. Note that, in this centralized
formulation of the problem, we do not require individual agents, but
rather the network as a whole, to be able to detect whether
$(x,z,\hat{x},\hat{z}) \in \ctriggerset$. In addition, when this
condition is enabled, state broadcasts among agents are performed
synchronously, as described by~\eqref{eq:hybrid2_disc}. Our strategy
to design $\ctriggerset$ is to first identify a candidate Lyapunov
function and study its evolution along the trajectories
of~\eqref{eq:hybrid2}. We then synthesize $\ctriggerset$ based on the
requirement that our Lyapunov function decreases along the
trajectories of~\eqref{eq:hybrid2} and conclude with a result showing
that the desired convergence properties are attained.

Before we introduce the candidate Lyapunov function, we present an
alternative representation
of~\eqref{eq:hybrid2_contx}-\eqref{eq:hybrid2_contz} that will be
useful in our analysis later. Given $(\hat{x},\hat{z}) \in
\realsnonnegative \times \reals^m$, let $\sigma(\hat{x},\hat{z})$ be
the set of agents $i$ for which $\dot{x}_i =
\nomflow_i(\hat{x},\hat{z})$ in~\eqref{eq:hybrid2_contx}. Formally,
\begin{align*}
  \sigma(\hat{x},\hat{z}) = \big\{i \in \{1,\dots,n\}:
  \nomflow_i(\hat{x},\hat{z}) \ge 0 \; \text{or} \; \hat{x}_i > 0
  \big\}.
\end{align*}
Next, let  $I_{\sigma(\hat{x},\hat{z})} \in \reals^{n \times n}$ be
defined by
\begin{align*}
  (I_{\sigma(\hat{x},\hat{z})})_{i,j} =
  \begin{cases}
    0, \; \text{if $i \not= j$ or $i \notin \sigma(\hat{x},\hat{z})$},
    \\
    1, \; \text{otherwise.}
  \end{cases}
\end{align*}
Note that this matrix is an identity-like matrix with a zero
$(i,i)$-element if $i \notin \sigma(\hat{x},\hat{z})$.  The matrix
$I_{\sigma(\hat{x},\hat{z})}$ has the following properties,
\begin{align*}
  I_{\sigma(\hat{x},\hat{z})} \succeq 0, \quad
  I_{\sigma(\hat{x},\hat{z})} = I_{\sigma(\hat{x},\hat{z})}^T, \quad
  I_{\sigma(\hat{x},\hat{z})}^2 = I_{\sigma(\hat{x},\hat{z})}, \quad
  \rho(I_{\sigma(\hat{x},\hat{z})}) \le 1 .
\end{align*}
Then, a compact representation
of~\eqref{eq:hybrid2_contx}-\eqref{eq:hybrid2_contz} is
\begin{align*}
  (\dot{x},\dot{z}) &= F(\hat{x},\hat{z}) :=
  (I_{\sigma(\hat{x},\hat{z})} \nomflow(\hat{x},\hat{z}),A\hat{x}-b),
\end{align*}
where $F = (F_x,F_z) : \realsnonnegative^n \times \reals^m \rightarrow
\reals^n \times \reals^m$.

\subsection{Candidate Lyapunov function and its evolution}

Now let us define and analyze the candidate Lyapunov function that we
use to design the trigger set~$\ctriggerset$.  The overall Lyapunov
function is the sum of $2$ separate functions $V_1$ and $V_2$, that we
introduce next. To define $V_1 : \reals^n \times \reals^m \rightarrow
\reals_{\ge 0}$, fix $\mathcal{K} > \NormInf{c+ x_* + A^Tz_*}$ where
$x_*$ (resp. $z_*$) is the solution to~\eqref{eq:pert} (resp. any
solution of the dual of~\eqref{eq:pert}) and let $(\bar{x},\bar{z})$
be a saddle-point of $L^{\mathcal{K}}$. Then
\begin{align*}
  V_1(x,z) &= \frac{1}{2}(x-\bar{x})^T(x-\bar{x}) +
  \frac{1}{2}(z-\bar{z})^T(z-\bar{z}).
\end{align*}
Note that $V_1 \ge 0$ is smooth with compact sublevel sets.
Next, $V_2 : \reals^n \times \reals^m \rightarrow \reals_{\ge 0}$ is
given by
\begin{align*}
  V_2(x,z) &= \frac{1}{2} \nomflow(x,z)^TI_{\sigma(x,z)} \nomflow(x,z)
  + \frac{1}{2}(Ax-b)^T(Ax-b).
\end{align*}
Note that $V_2\ge 0$ but, due to the state-dependent matrix
$I_{\sigma(x,z)}$, is only piecewise smooth. In this sense $V_2$ can
be viewed as a collection of multiple (smooth) Lyapunov functions,
each defined on a neighborhood where $\sigma$ is constant. Also,
$V_2^{-1}(0)$ is, by definition, the set of saddle-points of
$L^{\mathcal{K}}$ (cf. Lemma~\ref{lem:grad}). It turns out that the
negative terms in the Lie derivative of $V_1$ alone are insufficient
to ensure that $V_1$ is always decreasing given any practically
implementable trigger design (by \emph{practically implementable} we
mean a trigger design that does not demand arbitrarily fast state
broadcasting). The analogous statement regarding $V_2$ is also true
which is why we consider instead a candidate Lyapunov function $V :
\reals^n \times \reals^m \rightarrow \reals_{\ge 0}$ that is their sum
\begin{align*}
  V(x,z) = (V_1 + V_2)(x,z).
\end{align*}
The following result states an upper bound on $\mathcal{L}_FV$ in
terms of the state errors in $x$ and~$z$. 

\begin{proposition}\longthmtitle{Evolution of $V$}\label{prop:Lie-V}
  Let $X \times Z \subseteq \realsnonnegative^n \times \reals^m$ be
  compact and suppose that $(x,z,\hat{x},\hat{z}) \in X \times Z
  \times X \times Z$ is such that $\sigma(\hat{x},\hat{z}) \subseteq
  \sigma (x,z)$ and
  \begin{align} \label{eq:lim_sig}
    \sigma(x,z) = \lim_{\alpha \rightarrow 0}\sigma(x + \alpha
    F_x(\hat{x},\hat{z}), z + \alpha F_z(\hat{x},\hat{z})).
  \end{align} 
  Then $\mathcal{L}_FV(x,z)$ exists and
  \begin{align}
    \mathcal{L}_FV(x,z) &\le
    -\frac{1}{2}\nomflow(\hat{x},\hat{z})^TI_{\sigma(\hat{x},\hat{z})}
    \nomflow(\hat{x},\hat{z}) -\frac{1}{4}(A\hat{x}-b)^T(A\hat{x}-b) +
    40 e_x^Te_x + 20 e_z^Te_z \nonumber
    \\
    &\quad + 15\nomflow(x,z)^TI_{\sigma(x,z) \setminus
      \sigma(\hat{x},\hat{z})}\nomflow(x,z) , \label{eq:Lie-V}
  \end{align}
  where $ e_x = x-\hat{x}$ and $e_z = z-\hat{z}$.
\end{proposition}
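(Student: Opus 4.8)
The plan is to compute $\mathcal{L}_FV = \mathcal{L}_FV_1 + \mathcal{L}_FV_2$, exploiting the affine structure of $\nomflow$, and then to absorb the sample-and-hold errors $e_x,e_z$ via Young's inequality and the spectral bound {\bf SA \#1}. First I would argue that $\mathcal{L}_FV(x,z)$ exists: $V_1$ is smooth, and for $V_2$ the only obstruction is the state-dependent projection $I_{\sigma(x,z)}$. Hypothesis~\eqref{eq:lim_sig} ensures $\sigma$ is constant along the flow direction $F(\hat{x},\hat{z})$ in a one-sided neighbourhood of $(x,z)$, so along that direction $V_2$ agrees with the smooth quadratic obtained by freezing $I_{\sigma(x,z)}$, and its one-sided directional derivative therefore exists. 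Writing $I_n$ for the $n\times n$ identity, the constant Jacobian $[-(I_n+A^TA),\,-A^T]$ of $\nomflow$ gives the two identities $\nomflow(x,z)-\nomflow(\hat{x},\hat{z}) = -(I_n+A^TA)e_x - A^Te_z$ and $Ax-b = (A\hat{x}-b)+Ae_x$, which I will use repeatedly to pass from current-state to hatted quantities.

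Applying the chain rule with $I_{\sigma(x,z)}$ frozen yields
\begin{align*}
  \mathcal{L}_FV_1(x,z) &= (x-\bar{x})^TI_{\sigma(\hat{x},\hat{z})}\nomflow(\hat{x},\hat{z}) + (z-\bar{z})^T(A\hat{x}-b),
  \\
  \mathcal{L}_FV_2(x,z) &= \nomflow(x,z)^TI_{\sigma(x,z)}\big[-(I_n+A^TA)I_{\sigma(\hat{x},\hat{z})}\nomflow(\hat{x},\hat{z}) - A^T(A\hat{x}-b)\big] + (Ax-b)^TAI_{\sigma(\hat{x},\hat{z})}\nomflow(\hat{x},\hat{z}).
\end{align*}
For the $V_1$ term I would split $x-\bar{x} = (\hat{x}-\bar{x})+e_x$ and $z-\bar{z} = (\hat{z}-\bar{z})+e_z$. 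Since $I_{\sigma(\hat{x},\hat{z})}\nomflow(\hat{x},\hat{z})\in-\partial_xL^{\mathcal{K}}(\hat{x},\hat{z})$ and $A\hat{x}-b = \partial_zL^{\mathcal{K}}(\hat{x},\hat{z})$ (Lemma~\ref{lem:grad}), the convexity--concavity of $L^{\mathcal{K}}$ (Lemma~\ref{lem:lagrangian}) bounds the hatted part by $L^{\mathcal{K}}(\bar{x},\hat{z})-L^{\mathcal{K}}(\hat{x},\bar{z})$; using $A\bar{x}=b$, $\bar{x}\ge0$ and complementarity this is at most $-\tfrac12(A\hat{x}-b)^T(A\hat{x}-b)$. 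What remains of $\mathcal{L}_FV_1$ is then the two cross terms $e_x^TI_{\sigma(\hat{x},\hat{z})}\nomflow(\hat{x},\hat{z})$ and $e_z^T(A\hat{x}-b)$.

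For the $V_2$ term I would use the subset hypothesis $\sigma(\hat{x},\hat{z})\subseteq\sigma(x,z)$, which gives $I_{\sigma(x,z)}I_{\sigma(\hat{x},\hat{z})}=I_{\sigma(\hat{x},\hat{z})}$ and the splitting $I_{\sigma(x,z)} = I_{\sigma(\hat{x},\hat{z})}+I_{\sigma(x,z)\setminus\sigma(\hat{x},\hat{z})}$ into orthogonal projections. Substituting this together with the two affine identities above, the leading hatted block reproduces the synchronous computation: the terms containing $A^T(A\hat{x}-b)$ cancel and one is left with $-\nomflow(\hat{x},\hat{z})^TI_{\sigma(\hat{x},\hat{z})}(I_n+A^TA)I_{\sigma(\hat{x},\hat{z})}\nomflow(\hat{x},\hat{z})\le-\nomflow(\hat{x},\hat{z})^TI_{\sigma(\hat{x},\hat{z})}\nomflow(\hat{x},\hat{z})$. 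The projection $I_{\sigma(x,z)\setminus\sigma(\hat{x},\hat{z})}$ acts on coordinates where the flow vanishes but $V_2$ does not, and produces the uncontrolled contribution, measured at the current state, that becomes the coefficient-$15$ term in~\eqref{eq:Lie-V}; every other piece is again a cross term pairing one hatted quantity with one error.

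Finally I would collect all cross terms---each either a product of a hatted quantity ($\nomflow(\hat{x},\hat{z})$ or $A\hat{x}-b$) with an error, a product of $\nomflow(x,z)$ with an error through the set-difference block, or an error--error product---and bound them with $2u^Tv\le\epsilon\NormTwo{u}^2+\epsilon^{-1}\NormTwo{v}^2$. Here {\bf SA \#1} is essential, since $\rho(A^TA)\le1$ gives $\NormTwo{A}\le1$ and $\NormTwo{I_n+A^TA}\le2$, converting every operator norm into an absolute constant. Choosing the weights $\epsilon$ so that exactly half of the $V_2$ decrease and half of the $V_1$ decrease are spent absorbing the cross terms retains $-\tfrac12\nomflow(\hat{x},\hat{z})^TI_{\sigma(\hat{x},\hat{z})}\nomflow(\hat{x},\hat{z})$ and $-\tfrac14(A\hat{x}-b)^T(A\hat{x}-b)$ and accumulates the error coefficients into $40\,e_x^Te_x+20\,e_z^Te_z$. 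I expect the main obstacle to be exactly this bookkeeping: tracking the numerous cross terms and choosing the Young weights consistently so that the retained negative terms and the constants $40$, $20$, $15$ emerge as stated. The set-difference block is the most delicate, as it is the one contribution that cannot be made negative and must be carried through to the final bound.
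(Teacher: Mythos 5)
Your proposal is correct and follows essentially the same route as the paper's proof: the same splitting $\mathcal{L}_FV_1+\mathcal{L}_FV_2$, the saddle-point bound $L^{\mathcal{K}}(\bar{x},\hat{z})-L^{\mathcal{K}}(\hat{x},\bar{z})\le-\tfrac12(A\hat{x}-b)^T(A\hat{x}-b)$ via Lemmas~\ref{lem:grad} and~\ref{lem:lagrangian}, the frozen-$\sigma$ chain rule justified by~\eqref{eq:lim_sig}, the affine identities in $e_x,e_z$, the splitting $I_{\sigma(x,z)}=I_{\sigma(\hat{x},\hat{z})}+I_{\sigma(x,z)\setminus\sigma(\hat{x},\hat{z})}$, and Young's inequality with a uniform weight (the paper takes $\kappa=\tfrac{1}{10}$, yielding $-(1-5\kappa)$, $-\tfrac12(1-5\kappa)$, $\tfrac{3}{2\kappa}$, $\tfrac{4}{\kappa}$, $\tfrac{2}{\kappa}$, i.e., exactly the constants $\tfrac12$, $\tfrac14$, $15$, $40$, $20$). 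The only cosmetic deviations are that you bound operators by $\NormTwo{A}\le 1$ directly where the paper invokes the Cauchy interlacing theorem together with $\rho(A^TA)\le 1$, and that you make explicit the cancellation of the $A^T(A\hat{x}-b)$ cross terms that the paper leaves implicit in its omitted term-by-term bounds.
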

\begin{proof}
  For convenience, we use the shorthand notation $p = \sigma (x,z)$
  and $\hat{p} = \sigma (\hat{x},\hat{z})$. Consider first $V_1$,
  which is differentiable and thus $\mathcal{L}_FV_1(x,z)$ exists,
  \begin{align}
    \mathcal{L}_FV_1(x,z) &=
    (x-\bar{x})^TI_{\hat{p}}\nomflow(\hat{x},\hat{z}) +
    (z-\bar{z})^T(A\hat{x}-b) \nonumber
    \\
    &= (\hat{x}-\bar{x})^TI_{\hat{p}}\nomflow(\hat{x},\hat{z}) +
    (\hat{z}-\bar{z})^T(A\hat{x}-b) +
    e_x^TI_{\hat{p}}\nomflow(\hat{x},\hat{z}) +
    e_z^T(A\hat{x}-b). \label{eq:V_1-1}
  \end{align}
  Since $X \times Z$ is compact, without loss of generality assume
  that $\mathcal{K} \ge \Kbound(X \times Z)$ so that
  $-I_{\hat{p}}\nomflow(\hat{x},\hat{z}) \in \partial_x
  L^{\mathcal{K}}(\hat{x},\hat{z})$, cf. Lemma~\ref{lem:grad}.  This,
  together with the fact that $L^{\mathcal{K}}$ is convex in its first
  argument, implies
  \begin{align*}
    L^{\mathcal{K}}(\hat{x},\hat{z}) \le
    L^{\mathcal{K}}(\bar{x},\hat{z}) - (\hat{x}-\bar{x})^T
    I_{\hat{p}}\nomflow(\hat{x},\hat{z}).
  \end{align*}
  Since $L^{\mathcal{K}}$ is linear in $z$, we can write
  \begin{align*}
    L^{\mathcal{K}}(\hat{x},\hat{z}) =
    L^{\mathcal{K}}(\hat{x},\bar{z}) + (\hat{z}-\bar{z})^T(A\hat{x}-b)
    .
  \end{align*}
  Substituting these expressions into~\eqref{eq:V_1-1}, we get
  \begin{align*}
    \mathcal{L}_FV_1(x,z) &\le
    L^{\mathcal{K}}(\bar{x},\hat{z}) -
    L^{\mathcal{K}}(\hat{x},\bar{z}) +
    e_x^TI_{\hat{p}}\nomflow(\hat{x},\hat{z}) + e_z^T(A\hat{x}-b)
    \\
    & \le c^T\bar{x} + \frac{1}{2}\bar{x}^T\bar{x} - c^T\hat{x} -
    \frac{1}{2}\sum_{i=1}^n\hat{x}^T\hat{x} -
    \bar{z}^T(A\hat{x}-b)-\mathcal{K}\mathbbm{1}^T\max\{0,-\hat{x}\}
    \\&\quad - \frac{1}{2}(A\hat{x}-b)^T(A\hat{x}-b) +
    e_x^TI_{\hat{p}}\nomflow(\hat{x},\hat{z}) + e_z^T(A\hat{x}-b).
  \end{align*}
  From the analysis in the proof of Lemma~\ref{lem:lagrangian},
  inequality~\eqref{eq:lagrangian_bound} showed that
  \begin{align*}
    &c^T\bar{x} + \frac{1}{2}\bar{x}^T\bar{x}
    \\
    & \quad \le c^T\hat{x} + \frac{1}{2}\hat{x}^T\hat{x} +
    \bar{z}^T(A\hat{x}-b) + \frac{1}{2}(A\hat{x}-b)^T(A\hat{x}-b) +
    \mathcal{K}\mathbbm{1}^T\max\{0,-\hat{x}\} ,
  \end{align*}
  where we use the fact that $\bar{x}$ is also a solution
  to~\eqref{eq:pert}, cf. Lemma~\ref{lem:lagrangian}. Therefore,
  \begin{align}
    \mathcal{L}_FV_1(x,z) &\le
    -\frac{1}{2}(A\hat{x}-b)^T(A\hat{x}-b) +
    e_x^TI_{\hat{p}}\nomflow(\hat{x},\hat{z}) + e_z^T(A\hat{x}-b)
    \nonumber
    \\
    &\le -\frac{1}{2}(A\hat{x}-b)^T(A\hat{x}-b) +
    \frac{\kappa}{2}(A\hat{x}-b)^T(A\hat{x}-b) \nonumber
    \\
    &\quad + \frac{\kappa}{2}
    \nomflow(\hat{x},\hat{z})^TI_{\hat{p}}\nomflow(\hat{x},\hat{z}) +
    \frac{1}{2\kappa}e_x^Te_x + \frac{1}{2\kappa}e_z^Te_z
    , \label{eq:V_1_bound}
  \end{align}
  where we have used Lemma~\ref{lem:bilinear}. Next, let us consider
  $V_2$. We begin by showing that~\eqref{eq:lim_sig} is sufficient for
  $\mathcal{L}_FV_2(x,z)$ to exist. Since $\sigma$ is a discrete set
  of indices, for the limit in~\eqref{eq:lim_sig} to exist, there must
  exist an $\bar{\alpha} > 0$ such that
  \begin{align*}
    \sigma(x,z) = \sigma(x + \alpha
    F_x(\hat{x},\hat{z}), z + \alpha F_z(\hat{x},\hat{z})),
  \end{align*}
  for all $\alpha \in [0,\bar{\alpha}]$. This means that one can
  substitute $I_{\sigma(x,z)}$ for $I_{\sigma(x + \alpha
    F_x(\hat{x},\hat{z}), z + \alpha F_z(\hat{x},\hat{z}))}$ in the
  definition of the Lie derivative~\eqref{eq:Lie}. Since
  $I_{\sigma(x,z)}$ is constant with respect to $\alpha$, it is
  straightforward to see that
  \begin{align*}
    \mathcal{L}_FV_2(x,z) &= \frac{1}{2}\nabla( \nomflow(x,z)^TI_p
    \nomflow(x,z))^TF(\hat{x},\hat{z}) + \frac{1}{2}
    \nabla((Ax-b)^T(Ax-b))^TF(\hat{x},\hat{z}) .
  \end{align*}
  Thus,
  \begin{align}
    \mathcal{L}_FV_2(x,z) &= \nomflow(x,z)^TI_p (D_x \nomflow(x,z)
    F_x(\hat{x},\hat{z}) + D_z \nomflow(x,z) F_z(\hat{x},\hat{z})) +
    (Ax-b)^T A F_x(\hat{x},\hat{z}) \nonumber
    \\
    &= -\nomflow(x,z)^TI_p(A^TA +
    I)I_{\hat{p}}\nomflow(\hat{x},\hat{z}) -
    \nomflow(x,z)^TI_pA^T(A\hat{x}-b) \nonumber
    \\
    &\quad +
    (Ax-b)^TAI_{\hat{p}}\nomflow(\hat{x},\hat{z}) \label{eq:V_2-dot}.
  \end{align}
  Due to the assumption that $\hat{p} \subseteq p$, we can write $I_p
  = I_{\hat{p}} + I_{p \setminus \hat{p}}$. Also, $\nomflow(x,z)$ can
  be written equivalently in terms of the errors $e_x,e_z$ as
  \begin{align*}
    \nomflow(x,z) = \nomflow(\hat{x},\hat{z}) - A^Te_z - e_x - A^TAe_x .
  \end{align*} 
  Likewise, $Ax-b = A\hat{x} - b + Ae_x$. Substituting these
  quantities into~\eqref{eq:V_2-dot},
  \begin{align}
    \mathcal{L}_FV_2(x,z) &= -(\nomflow(\hat{x},\hat{z}) - A^Te_z -
    e_x - A^TAe_x)^TI_{\hat{p}}(A^TA + I
    )I_{\hat{p}}\nomflow(\hat{x},\hat{z}) \nonumber
    \\
    &\quad - \nomflow(x,z)^TI_{p \setminus \hat{p}}(A^TA +
    I)I_{\hat{p}}\nomflow(\hat{x},\hat{z}) \nonumber
    \\
    &\quad - (\nomflow(\hat{x},\hat{z}) - A^Te_z - e_x -
    A^TAe_x)^TI_{\hat{p}}A^T(A\hat{x}-b) \nonumber
    \\
    &\quad - \nomflow(x,z)^TI_{p \setminus \hat{p}}A^T(A\hat{x}-b) +
    (A\hat{x} - b +
    Ae_x)^TAI_{\hat{p}}\nomflow(\hat{x},\hat{z}). \label{eq:V_2_bound_prelim}
  \end{align}
  We now derive upper bounds for a few terms
  in~\eqref{eq:V_2_bound_prelim}. For example,
  \begin{align*}
    e_z^TA I_{\hat{p}}A^TAI_{\hat{p}}\nomflow(\hat{x},\hat{z}) &\le
    \frac{1}{2\kappa}e_z^Te_z +
    \frac{\kappa}{2}\nomflow(\hat{x},\hat{z})^T
    I_{\hat{p}}A^TAI_{\hat{p}}A^TA
    I_{\hat{p}}A^TAI_{\hat{p}}\nomflow(\hat{x},\hat{z})
    \\
    & \le \frac{1}{2\kappa}e_z^Te_z + \frac{\kappa}{2}
    \nomflow(\hat{x},\hat{z})^TI_{\hat{p}}\nomflow(\hat{x},\hat{z}),
  \end{align*}
  where we have used Lemma~\ref{lem:bilinear} and
  Theorem~\ref{th:interlacing} along with the facts that $\rho(A^TA) =
  \rho(AA^T) \le 1$ and $\rho(I_{\hat{p}}) \le 1$. Likewise,
  \begin{align*}
    e_x^T I_{\hat{p}}A^TAI_{\hat{p}}\nomflow(\hat{x},\hat{z}) &\le
    \frac{1}{2\kappa}e_x^Te_x + \frac{\kappa}{2}
    \nomflow(\hat{x},\hat{z})^TI_{\hat{p}}A^TA
    I_{\hat{p}}A^TAI_{\hat{p}}\nomflow(\hat{x},\hat{z})
    \\
    & \le \frac{1}{2\kappa}e_x^Te_x + \frac{\kappa}{2}
    \nomflow(\hat{x},\hat{z})^TI_{\hat{p}}\nomflow(\hat{x},\hat{z}),
  \end{align*}
  and 
  \begin{align*}
    \nomflow(x,z)^TI_{p \setminus \hat{p}}A^T(A\hat{x}-b) &\le
    \frac{1}{2\kappa} \nomflow(x,z)^TI_{p \setminus
      \hat{p}}\nomflow(x,z) + \frac{\kappa}{2}
    (A\hat{x}-b)^TAA^T(A\hat{x}-b)
    \\
    &\le \frac{1}{2\kappa} \nomflow(x,z)^TI_{p \setminus
      \hat{p}}\nomflow(x,z) + \frac{\kappa}{2}
    (A\hat{x}-b)^T(A\hat{x}-b).
  \end{align*}
  Repeatedly bounding every term in~\eqref{eq:V_2_bound_prelim} in an
  analogous way (which we omit for the sake of space and presentation)
  and adding the bound~\eqref{eq:V_1_bound}, we attain the following
  inequality
  \begin{align*}
    \mathcal{L}_FV(x,z) &\le
    -(1-5\kappa)\nomflow(\hat{x},\hat{z})^TI_{\hat{p}}
    \nomflow(\hat{x},\hat{z}) -
    \frac{1}{2}(1-5\kappa)(A\hat{x}-b)^T(A\hat{x}-b)
    \\
    &\quad + \frac{3}{2\kappa}
    \nomflow(x,z)^TI_{p\setminus\hat{p}}\nomflow(x,z) +
    \frac{4}{\kappa} e_x^Te_x + \frac{2}{\kappa}e_z^Te_z .
  \end{align*}
  Equation~\eqref{eq:Lie-V} follows by choosing $\kappa =
  \frac{1}{10}$, completing the proof.
\end{proof}

The reason why we have only considered the case
$\sigma(\hat{x},\hat{z}) \subseteq \sigma(x,z)$ (and not the more
general case of $\sigma(\hat{x},\hat{z}) \not= \sigma(x,z)$) when
deriving the bound~\eqref{eq:Lie-V} in Proposition~\ref{prop:Lie-V} is
the following: our distributed trigger design later (specifically, the
trigger sets $\dtriggersetzero$ introduced in
Section~\ref{sec:distributed}) ensures that $\sigma(\hat{x},\hat{z})
\subseteq \sigma(x,z)$ always. For this reason, we need not know how
$V$ evolves in the more general case.

\subsection{Centralized trigger set design and convergence
  analysis}\label{sec:centralized-trigger-design}

Here, we use our knowledge of the evolution of the function $V$,
cf. Proposition~\ref{prop:Lie-V}, to design the centralized trigger
set $\ctriggerset$. Our approach is to incrementally design subsets of
$\ctriggerset$ and then combine them at the end to define
$\ctriggerset$. The main observation that we base our design on is the
following: The first two terms in the right-hand-side
of~\eqref{eq:Lie-V} are negative and thus desirable and the rest are
positive. However, following a state broadcast, the positive terms
become zero. This motivates our first trigger set that should belong
to $\ctriggerset$,
\begin{multline}\label{eq:ctriggersete}
  \ctriggersete := \{ (x,z,\hat{x},\hat{z}) \in (\realsnonnegative^n
  \times \reals^m)^2 : \;A\hat{x} - b \not= 0 \text{ or }
  I_{\sigma(\hat{x},\hat{z})}\nomflow(\hat{x},\hat{z}) \not= 0, \text{
    and} 
  \\
  \frac{1}{8}(A\hat{x}-b)^T(A\hat{x}-b) +
  \frac{1}{4}\nomflow(\hat{x},\hat{z})^TI_{\sigma(\hat{x},\hat{z})}
  \nomflow(\hat{x},\hat{z}) \le 20 e_z^Te_z + 40e_x^Te_x \}.
\end{multline}
The numbers $\tfrac{1}{8}$ and $\tfrac{1}{4}$ in the inequalities that
define $\ctriggersete$ are design choices that we have made to ease
the presentation. Any other choice in $(0,1)$ is also possible, with
the appropriate modifications in the ensuing exposition.  Note that,
when both $A\hat{x} - b =
I_{\sigma(\hat{x},\hat{z})}\nomflow(\hat{x},\hat{z}) = 0$, no state
broadcasts are required since the system is at a (desired)
equilibrium.

Likewise, after a state broadcast, $\sigma(x,z) =
\sigma(\hat{x},\hat{z})$ and $I_{\sigma(x,z) \setminus
  \sigma(\hat{x},\hat{z})} = 0$ which means that the last term
in~\eqref{eq:Lie-V} is also zero. For this reason, define
\begin{align}\label{eq:ctriggersetsigma}
  \ctriggersetsigma := \{ (x,z,\hat{x},\hat{z}) \in
  (\realsnonnegative^n \times \reals^m)^2 :\sigma(x,z) \not=
  \sigma(\hat{x},\hat{z}) \},
\end{align}
which prescribes a state broadcast when the mode $\sigma$ changes.

We require one final trigger for the following reason. While the set
$\realsnonnegative^n \times \reals^m$ is invariant under the
continuous-time dynamics~\eqref{eq:disc-dyn}, this does not hold any
more in the event-triggered case because agents use outdated state
information. To preserve the invariance of this set, we define
\begin{align}\label{eq:ctriggersetzero}
  \ctriggersetzero & := \{(x,z,\hat{x},\hat{z}) \in
  (\realsnonnegative^n \times \reals^m)^2 : \exists i \in \{1,\dots,
  n\} \text{ s.t. } \hat{x}_i > 0, \, x_i = 0 \}.
\end{align}
If this trigger is activated by some agent $i$'s state becoming zero,
then it is easy to see from the definition of the
dynamics~\eqref{eq:hybrid2_contx} that $\dot{x}_i \ge 0$ after the
state broadcast and thus $x_i$ remains non-negative. Finally, the
overall centralized trigger set~is
\begin{align}\label{eq:h_centralized}
  \ctriggerset := \ctriggersete \cup \ctriggersetsigma \cup
  \ctriggersetzero.
\end{align} 
The following result characterizes the convergence properties
of~\eqref{eq:hybrid2} under the centralized event-triggered
communication scheme specified by~\eqref{eq:h_centralized}.

\begin{theorem}\longthmtitle{Convergence of the centralized
    event-triggered design}\label{th:convergence_prelim}
  If $\psi$ is a persistently flowing solution of~\eqref{eq:hybrid2}
  with $\ctriggerset$ defined as in~\eqref{eq:h_centralized}, then
  there exists a point $(x_*,z') \in \primalsol \times \reals^m$ such
  that,
  \begin{align*}
    \psi(t,j) \rightarrow (x_*,z',x_*,z') \quad \text{as} \quad t+j
    \longrightarrow \infty, \quad (t,j) \in \dom(\psi) .
  \end{align*}
\end{theorem}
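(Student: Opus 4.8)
The plan is to establish convergence of the centralized event-triggered design by combining a Lyapunov decrease argument during flows with an invariance/LaSalle-type analysis to pin down the limit set. Let me think through what's available and where the obstacles lie.

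The core tool is Proposition prop:Lie-V, which bounds the Lie derivative of $V = V_1 + V_2$ along flows. The trigger set $\ctriggerset$ was explicitly engineered so that during any flow interval, the bound is genuinely negative. Let me verify: $\ctriggersete$ guarantees that whenever the system is NOT at equilibrium, we have $\frac{1}{8}(A\hat x - b)^T(A\hat x - b) + \frac{1}{4}\nomflow(\hat x,\hat z)^T I_{\hat p}\nomflow(\hat x,\hat z) > 20 e_z^Te_z + 40 e_x^Te_x$ (since being in the trigger set would force a jump, and flowing means being outside it). $\ctriggersetsigma$ guarantees $\sigma(x,z) = \sigma(\hat x, \hat z)$ during flows, killing the last term in eq:Lie-V (the $I_{\sigma(x,z)\setminus\sigma(\hat x,\hat z)}$ term is zero since $p = \hat p$). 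So during flows, from eq:Lie-V:
$$\mathcal{L}_F V \le -\tfrac{1}{2}\nomflow^T I_{\hat p}\nomflow - \tfrac{1}{4}(A\hat x-b)^T(A\hat x-b) + 40 e_x^Te_x + 20 e_z^Te_z.$$
Being outside $\ctriggersete$ gives the reverse of its defining inequality, so $40e_x^Te_x + 20 e_z^Te_z < \frac{1}{8}(A\hat x-b)^T(A\hat x-b) + \frac{1}{4}\nomflow^T I_{\hat p}\nomflow$. Substituting, $\mathcal{L}_F V \le -\frac{1}{4}\nomflow^T I_{\hat p}\nomflow - \frac{1}{8}(A\hat x-b)^T(A\hat x-b) < 0$ whenever not at equilibrium. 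Good — so $V$ strictly decreases along flows and is unchanged by jumps (jumps only reset $\hat x,\hat z$, not $x,z$, so $V_1$ is unaffected; and right after a jump $e_x = e_z = 0$, $\sigma(x,z)=\sigma(\hat x,\hat z)$, so $V_2(x,z)$ is continuous through the jump).

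So here is the plan. First I would argue that $V$ is non-increasing along the entire hybrid trajectory (strictly decreasing on flows, constant across jumps), hence $V(\psi(t,j))$ converges to some limit $V_\infty \ge 0$, and the trajectory stays in the compact sublevel set $V^{-1}(\le V(\psi(0,0)))$; I'd invoke the three trigger sets to justify the strict-decrease bound just derived. Second, I would use \emph{persistent flowing} (hypothesis PFii, or eventual continuity PFi): there is $\tau_P > 0$ and a sequence of flow intervals each of length at least $\tau_P$. On each such interval, integrating the strict decrease bound yields $V(\psi(t_{j_k}+\tau_P,j_k)) - V(\psi(t_{j_k},j_k)) \le -\int_0^{\tau_P}\big(\tfrac14 \nomflow^T I_{\hat p}\nomflow + \tfrac18 (A\hat x-b)^T(A\hat x-b)\big)\,dt$. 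Since the total variation of $V$ is finite (it converges), these integrated decreases must tend to zero, forcing $\nomflow(\hat x,\hat z)^T I_{\hat p}\nomflow(\hat x,\hat z) \to 0$ and $A\hat x - b \to 0$ along the flows — i.e., $V_2(\hat x,\hat z) \to 0$, meaning the broadcast state approaches the saddle-point set, which by Lemma lem:grad is $V_2^{-1}(0)$, the equilibria of the dynamics.

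Third, I would convert convergence of the broadcast state to convergence of the true state. Since the trigger $\ctriggersete$ keeps the error terms $e_x, e_z$ dominated by the (vanishing) quantity $\tfrac18(A\hat x-b)^T(A\hat x-b)+\tfrac14\nomflow^T I_{\hat p}\nomflow$, the errors $e_x = x-\hat x$ and $e_z = z - \hat z$ also vanish, so $(x,z)$ and $(\hat x,\hat z)$ share the same limit behavior and $(x,z)$ approaches the equilibrium set. Finally I must identify the specific limit point: because $V_1$ (a squared distance to the fixed saddle point $(\bar x,\bar z)$) also converges and the trajectory approaches the equilibrium set on which $V_2 = 0$, a standard argument (convergence of $V_1$ to a constant combined with the trajectory accumulating on the equilibrium set) pins down a single limit $(x_*, z')$ with $x_* \in \primalsol$; here $\ctriggersetzero$ ensures $x$ stays in $\realsnonnegative^n$ so the limit is feasible, and Lemmas lem:pert and lem:lagrangian identify $x_*$ as a genuine LP solution. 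The main obstacle will be the last step: upgrading ``$(x,z)$ accumulates on the equilibrium set'' to convergence to a \emph{single} point $(x_*,z')$, since the equilibrium set is generally not a singleton (the dual variable $z$ is not unique). I'd handle this by showing the trajectory gets trapped near a single equilibrium — using that $V_2 \to 0$ forces eventual agreement of $\sigma$, after which the dynamics are effectively those of a single smooth mode and $V_1$'s monotone convergence forces the distance to one particular saddle point to stabilize, yielding a unique limit in $\primalsol \times \reals^m$.
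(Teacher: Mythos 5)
Your skeleton matches the paper's proof: monotonicity of $V$ from Proposition~\ref{prop:Lie-V} plus the trigger design, integration of the strict decrease over the $\tau_P$-length flow intervals, accumulation of $(\hat{x},\hat{z})$ (and hence $(x,z)$) on the saddle-point set, and identification of the limit via Lemma~\ref{lem:lagrangian}. However, two steps have genuine gaps. The first concerns monotonicity itself. You verify the decrease where $\mathcal{L}_FV$ exists and constancy across jumps, but $V_2$ depends on $\sigma(x,z)$, so along a flowing solution the map $t \mapsto V(x(t,j),z(t,j))$ can be \emph{discontinuous in flow time} at instants where $\sigma(x(t,j),z(t,j))$ changes. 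This is not the discontinuity your parenthetical addresses: the broadcast jump leaves $(x,z)$, and hence $V$, untouched, whereas the potentially dangerous discontinuity occurs at the switching instant before/at the triggered broadcast. One must check that these discontinuities are non-increasing: an index $i$ entering $\sigma$ must do so with $\nomflow_i(x,z)=0$ (by continuity, since $\nomflow_i(\hat{x},\hat{z})<0$ beforehand and $\ctriggersetsigma$ enforced mode agreement up to that instant), so it contributes nothing to $V_2$, while an index leaving $\sigma$ removes the nonnegative term $\tfrac{1}{2}\nomflow_i(x,z)^2$. Without this verification---case (b) in the paper's proof---the claim that $V$ is non-increasing is unproven, because $V$ is only piecewise smooth along the trajectory.

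The second gap is your mechanism for convergence to a \emph{single} point, which fails as stated. Convergence of $V_1$ (the squared distance to the fixed saddle point $(\bar{x},\bar{z})$) to a constant, combined with accumulation on the saddle set, only confines the omega-limit set to the intersection of a sphere centered at $(\bar{x},\bar{z})$ with the saddle set; since the dual solution set is in general a nontrivial convex set, this intersection can be a continuum, so no unique limit follows. Moreover, the assertion that $V_2 \to 0$ forces eventual agreement (or constancy) of $\sigma$ is unsubstantiated: near the saddle set only $I_{\sigma}\nomflow \to 0$ is guaranteed, and indices with $x_i = 0$ can keep entering and leaving $\sigma$, so there is no ``single smooth mode'' in the tail. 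The paper's repair is a re-centering argument: extract, via Bolzano--Weierstrass, a subsequence of the sample points converging to some saddle point $(\bar{x}',\bar{z}')$; define $W = W_1 + V_2$ with $W_1$ the squared distance to $(\bar{x}',\bar{z}')$; observe that the entire monotonicity analysis applies verbatim to $W$ (nothing in it used any property of $(\bar{x},\bar{z})$ beyond being a saddle point of $L^{\mathcal{K}}$); then the invariance of arbitrarily small sublevel sets of $W$, entered along the subsequence, traps the trajectory in shrinking neighborhoods of $(\bar{x}',\bar{z}')$ and yields convergence to that point. Replacing your trapping step with this re-centering closes the argument.
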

\begin{proof}
  Let $(t,j) \mapsto \psi (t,j) = (x (t,j),z (t,j),\hat{x}
  (t,j),\hat{z} (t,j))$. We begin the proof by showing that $V$ is
  non-increasing along~$\psi$. To this end, it suffices to prove that
  (a) $\mathcal{L}_FV(x(t,j),z(t,j)) \le 0$ when $\psi$ is flowing and
  $\mathcal{L}_FV(x(t,j),z(t,j))$ exists, (b) $V(x(t,j),z(t,j)) \le
  \lim_{\tau \rightarrow t^-}V(x(t,j),z(t,j))$ when $\psi(t,j)$ is
  flowing but $\mathcal{L}_FV(x(t,j),z(t,j))$ does not exist, and (c)
  $V(x(t,j+1),z(t,j+1)) \le V(x(t,j),z(t,j))$ when $\psi$ is jumping.

  We begin with (a) and consider $(t,j) \in \dom(\psi)$ for which
  $\psi$ is flowing. Then, the interval $I^{j} := \{t:(t,j) \in
  \dom(\psi)\}$ has non-empty interior and $t \in
  \interior(I^{j})$. This means that $\psi(t,j) \notin \ctriggerset$
  and, in particular, $\sigma(x(t,j),z(t,j)) =
  \sigma(\hat{x}(t,j),\hat{z}(t,j))$ by construction of
  $\ctriggersetsigma$. Also, let $X \times Z$ be a compact set
  such that $\psi(t,j) \in X \times Z \times X \times Z$. Therefore,
  the conditions of Proposition~\ref{prop:Lie-V} are satisfied and
  $\mathcal{L}_FV(x(\tau,j),z(\tau,j))$ exists for all $\tau \in
  \interior(I^j)$. Using~\eqref{eq:Lie-V}, it holds that
  \begin{align*}
    \mathcal{L}_FV(x(t,j),z(t,j)) &\le
    -\frac{1}{8}(A\hat{x}(t,j)-b)^T(A\hat{x}(t,j)-b)
    \\
    &\quad - \frac{1}{4}
    \nomflow(\hat{x}(t,j),\hat{z}(t,j))^TI_{\sigma(\hat{x}(t,j),\hat{z}(t,j))}
    \nomflow(\hat{x}(t,j),\hat{z}(t,j)),
  \end{align*}
  where we have used (i) the fact that, since $\sigma(x(t,j),z(t,j)) =
  \sigma(\hat{x}(t,j),\hat{z}(t,j))$, the last quantity
  in~\eqref{eq:Lie-V} is zero and (ii) the bound on $20e_x^Te_x +
  40e_z^Te_z$ in the definition of $\ctriggersete$. Clearly, in
  this case, $\mathcal{L}_FV(x(t,j),z(t,j)) \le 0$ when $\psi(t,j) \in
  X \times Z \times X \times Z$.

  Next, consider (b). Since $V_1$ is smooth,
  $\mathcal{L}_FV_1(x(t,j),z(t,j))$ exists, however, when
  $V_2(x(t,j),z(t,j))$ is discontinuous,
  $\mathcal{L}_FV(x(t,j),z(t,j))$ does not. This happen at any $(t,j)
  \in \dom(\psi)$ for which (i) $I^j$ (as defined previously) has
  non-empty interior and (ii) $\sigma(x(t,j),z(t,j)) \not= \lim_{\tau
    \rightarrow t^-}\sigma(x(\tau,j),z(\tau,j))$
  (cf. Proposition~\ref{prop:Lie-V}). Note that condition (i) ensures
  that the limit in condition (ii) is well-defined. For purposes of
  presentation, define the sets
  \begin{align*}
    \mathcal{S}_{+} := \sigma(x(t,j),z(t,j)) \setminus
    \text{$\textstyle \lim_{\tau \rightarrow t^-}$}
    \sigma(x(\tau,j),z(\tau,j)),
    \\
    \mathcal{S}_{-} := \text{$\textstyle \lim_{\tau \rightarrow t^-}$}
    \sigma(x(\tau,j),z(\tau,j)) \setminus \sigma(x(t,j),z(t,j)).
  \end{align*}
  Note that one of $\mathcal{S}_{+},\mathcal{S}_{-}$ may be empty. We
  can write
  \begin{multline*}
    V_2(x(t,j),z(t,j)) = \text{$\textstyle \lim_{\tau \rightarrow
        \tau^-}$} V_2(x(\tau,j),z(\tau,j))
    \\
    + \frac{1}{2} \text{$\textstyle \sum_{i \in \mathcal{S}_{+}}$}
    \nomflow_i(x(t,j),z(t,j))^2 - \frac{1}{2} \text{$\textstyle
      \sum_{i \in \mathcal{S}_{-}}$} \nomflow_i(x(t,j),z(t,j))^2.
  \end{multline*}
  For each $i \in \mathcal{S}_+$, it must be that
  $\nomflow_i(x(t,j),z(t,j)) = 0$ since
  $\nomflow_i(\hat{x}(t,j),\hat{z}(t,j)) < 0$ and $\nomflow,x,z$ are
  continuous. Moreover, the last term in the right-hand-side of the
  above expression is non-positive. Thus, $V_2(x(t,j),z(t,j)) \le
  \lim_{\tau \rightarrow t^-} V_2(x(\tau,j),z(\tau,j))$.

  Next, when $\psi$ is jumping, as is case (c), $V(x(t,j),z(t,j)) =
  V(x(t,j+1),z(t,j+1)$ because $(x(t,j+1),z(t,j+1)) = (x(t,j),z(t,j))$
  according to~\eqref{eq:hybrid2_disc}.

  To summarize, $V(x(t,j),z(t,j))$ is non-increasing when $\psi(t,j)
  \in X \times Z \times X \times Z$. Without loss of generality, we
  choose $X \times Z = V^{-1}(\le c)$, where $c =
  V(x(0,0),z(0,0))$. $X \times Z$ is compact because the sublevel sets
  of $V_1$ are compact, and $X \times Z \times X \times Z$ is
  invariant so as not to contradict $V(x(t,j),z(t,j))$ being
  non-increasing on $X \times Z \times X \times Z$. Thus,
  $V(x(t,j),z(t,j))$ is non-increasing at all times and $\psi$ is
  bounded.

  Now we establish the convergence property
  of~\eqref{eq:hybrid2}. First note that, in this preliminary design,
  $\psi$ being persistently flowing implies also that the Lie
  derivative of $V$ along $F$ exists for $\tau_P$ time on those
  intervals of persistent flow. This is because $\psi$ flowing implies
  that $\sigma$ is constant and thus the Lie derivative of $V$ along
  $F$ exists (cf.~\eqref{eq:lim_sig}). There are two possible
  characterizations of persistently flowing $\psi$ as given in
  Section~\ref{sec:prelim}. Consider (PFi). By the boundedness of
  $\psi$ just established, it must be that for all $t \ge t_J$
  \begin{alignat*}{2}
    0 &= \dot{x}(t,j) &&=
    I_{\sigma(\hat{x}(t_J,J),\hat{z}(t_J,J))}\nomflow(\hat{x}(t_J,J)),\hat{z}(t_J,J)),
    \\
    0 &= \dot{z}(t,j) &&= A\hat{x}(t_J,J)-b.
  \end{alignat*}
  By Lemma~\ref{lem:grad} this means that
  $(\hat{x}(t_J,J),\hat{z}(t_J,J))$ is a saddle-point of
  $L^{\mathcal{K}}$ (without loss of generality, we assume that
  $\mathcal{K} \ge K_*(X \times Z)$). Applying
  Lemma~\ref{lem:lagrangian} reveals that $\hat{x}(t_J,J) \in
  \primalsol$. Since $\hat{x}(t,j)$ is a sampled version of $x(t_J,J)$
  it is clear that $x(t_J,J) \in \primalsol$ as well, and since their
  dynamics are stationary in finite time, they converge to a point,
  completing the proof.

  Consider then (PFii), the second characterization of persistently
  flowing. We have established that
  $\{V(x(t_{j_k},j_k),z(t_{j_k},j_k))\}_{k = 0}^{\infty}$ is
  non-increasing. Since it is also bounded from below by 0, by the
  monotone convergence theorem there exists a $V_* \in [0,c]$ such
  that $\lim_{k \rightarrow \infty}V(x(t_{j_k},j_k),z(t_{j_k},j_k)) =
  V_*$. Thus
  \begin{align*}
    V(x(t_{j_k},j_k),z(t_{j_k},j_k)) -
  V(x(t_{j_{k+1}},j_{k+1}),z(t_{j_{k+1}},j_{k+1})) \rightarrow 0.
  \end{align*}
  Let $\delta > 0$ and consider $\kappa \in \naturals$ such that
  \begin{align*}
    V(x(t_{j_k},j_k),z(t_{j_k},j_k)) -
    V(x(t_{j_{k+1}},j_{k+1}),z(t_{j_{k+1}},j_{k+1})) < \delta ,
  \end{align*}
  for all $k \ge \kappa$. By the bound established on
  $\mathcal{L}_FV(x(t,j_k),z(t,j_k))$, which exists for all $(t,j_k)
  \in ([t_{j_k},t_{j_k}+\tau_P),j_k)$, it holds that,
  \begin{align*}
    V(x(t_{j_{k+1}},&j_{k+1}),z(t_{j_{k+1}},j_{k+1}))
    \\
    &\le V(x(t_{j_k},j_k),z(t_{j_k},j_k)) -
    \frac{1}{8}(A\hat{x}(t_{j_k},j_k)-b)^T(A\hat{x}(t_{j_k},j_k)-b)\tau_P
    \\
    & \quad - \frac{1}{4}
    \nomflow(\hat{x}(t_{j_k},j_k),\hat{z}(t_{j_k},j_k))^T
    I_{\sigma(\hat{x}(t_{j_k},j_k),\hat{z}(t_{j_k},j_k))}
    \nomflow(\hat{x}(t_{j_k},j_k),\hat{z}(t_{j_k},j_k))\tau_P .
  \end{align*}
  Therefore, $V(x(t_{j_k},j_k),z(t_{j_k},j_k)) -
  V(x(t_{j_{k+1}},j_{k+1}),z(t_{j_{k+1}},j_{k+1})) < \delta$ for all
  $k \ge \kappa$ implies that
  \begin{align*}
    \nomflow(\hat{x}(t_{j_k},j_k),\hat{z}(t_{j_k},j_k))^T
    I_{\sigma(\hat{x}(t_{j_k},j_k),\hat{z}(t_{j_k},j_k))}
    \nomflow(\hat{x}(t_{j_k},j_k),\hat{z}(t_{j_k},j_k)) & \le 4\delta
    \tau_P ,
    \\
    (A\hat{x}(t_{j_k},j_k)-b)^T(A\hat{x}(t_{j_k},j_k)-b) & \le 8\delta
    \tau_P,
  \end{align*}
  for all $k \ge \kappa$. Since $\tau_P$ is a uniform constant and
  $\delta > 0$ can be taken arbitrarily small, we deduce
  \begin{align*}
    I_{\sigma(\hat{x}(t_{j_k},j_k),\hat{z}(t_{j_k},j_k))}
    \nomflow(\hat{x}(t_{j_k},j_k),\hat{z}(t_{j_k},j_k)) \rightarrow 0
    \quad \text{and} \quad A\hat{x}(t_{j_k},j_k) - b \rightarrow 0,
  \end{align*}
  as $k \rightarrow \infty$. By Lemma~\ref{lem:grad}, this means that
  $(\hat{x}(t_{j_k},j_k),\hat{z}(t_{j_k},j_k))$ converges to the set
  of saddle-points of $L^{\mathcal{K}}$. The same argument holds for
  $x(t_{j_k},j_k)$ since $\hat{x}(t_{j_k},j_k)$ is a sampled version
  of that state.

  Finally, we establish the convergence to a point. By the
  Bolzano-Weierstrass Theorem, there exists a subsequence
  $\{j_{k_{\ell}}\}$ such that
  $(x(t_{j_{k_{\ell}}},j_{k_{\ell}}),z(t_{j_{k_{\ell}}},j_{k_{\ell}}))$
  converges to a saddle-point $(\bar{x}',\bar{z}')$ of
  $L^{\mathcal{K}}$. Fix $\delta' > 0$ and let $\ell_*$ be such that
  \begin{align*}
    \NormTwo{(x(t_{j_{k_{\ell}}},j_{k_{\ell}}),z(t_{j_{k_{\ell}}},j_{k_{\ell}}))
      - (\bar{x}',\bar{z}')} < \delta',
  \end{align*}
  for all $\ell \ge \ell_*$. Consider the function $W = W_1 + V_2$
  where
  \begin{align*}
    W_1(x,z) = \frac{1}{2}(x-\bar{x}')^T(x-\bar{x}') +
    \frac{1}{2}(z-\bar{z}')^T(z-\bar{z}').
  \end{align*}
  Let $c' =
  W(x(t_{j_{k_{\ell_*}}},j_{k_{\ell_*}}),z(t_{j_{k_{\ell_*}}},j_{k_{\ell_*}}))$
  and $X' \times Z' = W^{-1}(\le c')$. Repeating the previous
  analysis, but for $W$ instead of $V$, we deduce that $X' \times Z'
  \times X' \times Z'$ is invariant. Consequently,
  $\NormTwo{(x(t,j),z(t,j)) - (\bar{x}',\bar{z}')} < \delta'$ for all
  $(t,j) \in \dom(\psi)$ such that $t + j \ge t_{j_{k_{\ell_*}}} +
  j_{k_{\ell_*}}$. Since $\delta' > 0$ is arbitrary, it
  holds that
  \begin{align*}
    \psi(t,j) \rightarrow (\bar{x}',\bar{z}',\bar{x}',\bar{z}') \quad
    \text{as} \quad t + j \longrightarrow \infty, \quad (t,j) \in
    \dom(\psi) .
  \end{align*}
  Since $(\bar{x}',\bar{z}')$ is a saddle-point of $L^{\mathcal{K}}$
  and, without loss of generality $\mathcal{K} \ge K_*(X \times Z)$,
  applying Lemma~\ref{lem:lagrangian} reveals that $\bar{x}' \in
  \primalsol$, which completes the proof.
\end{proof}

\begin{remark}\longthmtitle{Motivation for  quadratic regularization
    of 
    linear program}\label{re:motivation-qp}
  {\rm Here we revisit the claim made in
    Section~\ref{sec:discontinuous-dynamics} that using the
    saddle-point dynamics derived for the original linear
    program~\eqref{eq:linprog} would not be amenable to an
    event-triggered implementation. If we were to follow the same
    design methodology for such dynamics, we would find that the bound
    on the Lie derivative of $V$ would resemble~\eqref{eq:Lie-V}, but
    without the non-positive term $-\nomflow(\hat{x},\hat{z})^T
    I_{\sigma(\hat{x},\hat{z})}\nomflow(\hat{x},\hat{z})$. Following
    the same methodology to identify the trigger set, one would then
    use the trigger
    \begin{align*}
      \frac{1}{8}(A\hat{x}-b)^T(A\hat{x}-b) \le 20 e_z^Te_z +
      40e_x^Te_x ,
    \end{align*}
    to define~$\ctriggersete$ and ensure that the function~$V$
    does not increase. However, this trigger may easily result in
    continuous-time communication: consider a scenario where
    $A\hat{x}-b = 0$, but the state~$x$ is still evolving. Then the
    trigger would require continuous-time broadcasting of~$x$ to
    ensure that $e_x$ remains zero.   }\oprocend
\end{remark}

\myclearpage
\section{Algorithm design with distributed event-triggered
  communication}\label{sec:distributed}

In this section, we provide a distributed solution to
Problem~\ref{problem}, e.g., a coordination algorithm to solve linear
programs requiring only communication at discrete instants of time
triggered by criteria that agents can evaluate with local
information. Our strategy to accomplish this is to investigate to what
extent the centralized triggers identified in
Section~\ref{sec:centralized-trigger-design} can be implemented in a
distributed way. In turn, making these triggers distributed poses the
additional challenge of dealing with the asynchronism in the state
broadcasts across different agents, which raises the possibility of
non-persistency in the solutions. We deal with both issues in our
forthcoming discussion and establish the convergence of our
distributed design.

\subsection{Distributed trigger set
  design}\label{sec:distributed-trigger-design}

Here, we design distributed triggers that individual agents can
evaluate with the local information available to them to guarantee the
monotonically decreasing evolution of the candidate Lyapunov
function~$V$. Our design methodology builds on the centralized trigger
sets~$\ctriggersete$, $\ctriggersetsigma$, and $\ctriggersetzero$ of
Section~\ref{sec:centralized-trigger-design}. As a technical detail,
the distributed algorithm that results from this section has an
extended state which, for ease of notation, we denote by
\begin{align*}
  \xi = (x,z,s,q,r,\hat{x},\hat{z}) \subseteq \Xi := \reals^n_{\ge 0}
  \times \reals^m \times \reals^n_{\ge 0} \times \{0,1\}^{(n+m)\times n} \times
  \reals^{n+m} \times \reals^n_{\ge 0} \times \reals^m.
\end{align*}
The meaning and dynamics of states $s,q,$ and $r$ will be revealed as
they become necessary in our development.

We start by showing how the inequality that defines whether the
network state belongs to the set $\ctriggersete$
in~\eqref{eq:ctriggersete} can be distributed across the group of
agents. Given $\mu_1,\dots,\mu_{n+m} >0$, consider the following
trigger set for each agent,
\begin{align*}
  \dtriggersete :=
  \begin{cases}
    \{ \xi \in \Xi: \nomflow_i(\hat{x},\hat{z}) \not= 0 \text{ and }
    (e_x)_i^2 \ge \mu_i \nomflow_i(\hat{x},\hat{z})^2 \}, \;
    &\text{{\rm if $i \le n$,}}
    \\
    \{ \xi \in \Xi: a_{i-n}^T\hat{x} - b_{i-n} \not= 0 \text{ and }
    (e_z)_{i-n}^2 \ge \mu_i(a_{i-n}^T\hat{x} - b_{i-n})^2\}, \!
    &\text{{\rm if $i \ge n+1$.}}
  \end{cases}
\end{align*}
If each $\mu_i \le \tfrac{1}{160}$ and $(x,z,\hat{x},\hat{z})$ is such
that the inequalities defining each $\dtriggersete$ do not hold, then
it is clear that $(x,z,\hat{x},\hat{z}) \notin \ctriggersete$. To
ensure convergence of the resulting algorithm, we later characterize
the specific ranges for the design parameters $\{\mu_i\}_{i=1}^{n+m}$.

Next, we show how the inclusion of the network state in the triggered
set~$\ctriggersetzero$ defined in~\eqref{eq:ctriggersetzero} can be
easily evaluated by individual agents with partial information. In
fact, for each $i \in \until{n}$, define the set
\begin{align*}
  \dtriggersetzero := \{\xi \in \Xi :\hat{x}_i > 0 \text{ but } x_i = 0\} .
\end{align*}
Clearly, $(x,z,\hat{x},\hat{z}) \in \ctriggersetzero$ if and only if
there is $i \in \until{n}$ such that $\xi \in \dtriggersetzero$.

The triggered set $ \ctriggersetsigma$ defined
in~\eqref{eq:ctriggersetsigma} presents a greater challenge from a
distributed computation viewpoint. The problem is that, in the absence
of fully up-to-date information from its neighbors, an agent will fail
to detect the mode switches that characterize the definition of this
set.  The specific scenario we refer to is the following: assume agent
$i \in \until{n}$ has $x_i = 0$ and the information available to it
confirms that its state should remain constant, i.e., with
$\nomflow_i(\hat{x},\hat{z}) < 0$. If the condition $\nomflow_i(x,z)
\ge 0$ becomes true as the network state evolves, this fact is
undetectable by $i$ with its outdated information. In such a case, $i
\notin \sigma(\hat{x},\hat{z})$ but $i \in \sigma(x,z)$, meaning that
the equality $\sigma(x,z) = \sigma(\hat{x},\hat{z})$ defining the
trigger set $ \ctriggersetsigma$ would not be not enforced.  To deal
with this issue, we first need to understand the effect that a
mismatch in the modes has on the evolution of the candidate Lyapunov
function~$V$. We address this in the following result.

\begin{proposition}\longthmtitle{Bound on evolution of candidate
    Lyapunov function under mode mismatch}\label{prop:mismatch}
  Suppose that $(\hat{x},\hat{z}) \in \realsnonnegative^n \times
  \reals^m$ is such that $i \notin \sigma(\hat{x},\hat{z})$ for some
  $i \in \{1,\dots,n\}$ and let $t \mapsto (x(t),z(t))$ be the
  solution to
  \begin{align*}
    (\dot{x},\dot{z}) &= F(\hat{x},\hat{z}),
  \end{align*}
  starting from $(\hat{x},\hat{z})$. Let $T > 0$ be the minimum time
  such that $i \in \sigma(x(T),z(T))$. Then, for any $\nu > 0$, and
  all $t$ such that $t - T < \frac{\nu}{2\sqrt{2}}$, the following
  holds,
  \begin{align*}
    \nomflow_i(x(t),z(t))^2 \le \nu^2
    \nomflow(\hat{x},\hat{z})^TI_{\sigma(\hat{x},\hat{z}) \cap \neigh^x_i}
    \nomflow(\hat{x},\hat{z}) + \nu^2
    (A\hat{x}-b)^TI_{\neigh^z_i}(A\hat{x}-b).
  \end{align*} 
\end{proposition}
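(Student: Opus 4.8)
The plan is to exploit the fact that, because the flow map $F$ is evaluated at the frozen sampled state $(\hat{x},\hat{z})$, the vector field driving $t\mapsto(x(t),z(t))$ is \emph{constant}. Hence the solution is affine, $x(t)=\hat{x}+tF_x(\hat{x},\hat{z})$ and $z(t)=\hat{z}+tF_z(\hat{x},\hat{z})$, and since $\nomflow$ is itself affine in $(x,z)$, the scalar map $t\mapsto\nomflow_i(x(t),z(t))$ is affine with constant slope $\beta_i$. First I would unpack the mismatch hypothesis: writing $\hat{p}:=\sigma(\hat{x},\hat{z})$, the condition $i\notin\hat{p}$ forces $\nomflow_i(\hat{x},\hat{z})<0$ and $\hat{x}_i=0$, so $\dot{x}_i=(I_{\hat{p}}\nomflow(\hat{x},\hat{z}))_i=0$ and thus $x_i(t)\equiv 0$. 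Consequently $i\in\sigma(x(T),z(T))$ can only be caused by $\nomflow_i(x(T),z(T))\ge 0$; since $\nomflow_i$ starts strictly negative and $T$ is the first crossing time, continuity gives $\nomflow_i(x(T),z(T))=0$, whence $\nomflow_i(x(t),z(t))=\beta_i(t-T)$ along the flow (and in particular $\beta_i>0$, so that $T$ is well defined).

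Next I would compute the slope explicitly. Differentiating along the flow and using $D_x\nomflow=-(I+A^TA)$, $D_z\nomflow=-A^T$ together with $\dot{x}=g:=I_{\hat{p}}\nomflow(\hat{x},\hat{z})$ and $\dot{z}=h:=A\hat{x}-b$, one obtains the constant
\[
  \beta_i = -\big[(I+A^TA)g\big]_i-\big[A^Th\big]_i .
\]
Because $i\notin\hat{p}$ the $i$th entry of $g$ vanishes, so this reduces to $\beta_i=-[A^TAg]_i-[A^Th]_i=-(A_{\cdot,i})^T(Ag+h)$, where $A_{\cdot,i}$ is the $i$th column of $A$. The key structural observation is that $(A_{\cdot,i})^T$ selects only the rows $\ell$ with $a_{\ell,i}\ne 0$, i.e.\ the constraints in which agent $i$ participates, so only the residuals $h_\ell$ with $n+\ell\in\neigh_i^z$ enter. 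Moreover, for each such row, $(Ag)_\ell=\sum_j a_{\ell,j}g_j$ involves only indices $j$ sharing constraint $\ell$ with $i$, all of which lie in $\neigh_i^x\cup\{i\}$; since $g_i=0$, only the neighbor components $\{g_j:j\in\neigh_i^x\}$ survive.

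The heart of the argument is to turn this locality into the claimed quadratic bound. Splitting $\beta_i^2\le 2[A^TAg]_i^2+2[A^Th]_i^2$, I would estimate each term by Cauchy--Schwarz combined with \text{{\bf SA \#1}}. For the residual term, $[A^Th]_i^2=((A_{\cdot,i})^Th)^2\le\|A_{\cdot,i}\|_2^2\sum_{\ell:a_{\ell,i}\ne 0}h_\ell^2\le\sum_{\ell:a_{\ell,i}\ne 0}h_\ell^2$, using $\|A_{\cdot,i}\|_2^2=(A^TA)_{i,i}\le\rho(A^TA)\le 1$. For the coupling term, set $\tilde{g}=I_{\neigh_i^x}g$; since $g_i=0$ and off-neighbor indices do not appear in the active rows, $(Ag)_\ell=(A\tilde{g})_\ell$ whenever $a_{\ell,i}\ne 0$, so $[A^TAg]_i^2\le\|A_{\cdot,i}\|_2^2\sum_{\ell:a_{\ell,i}\ne0}(A\tilde{g})_\ell^2\le\|A\tilde{g}\|_2^2\le\rho(A^TA)\|\tilde{g}\|_2^2\le\sum_{j\in\neigh_i^x}g_j^2$. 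Because $g_j=\nomflow_j(\hat{x},\hat{z})$ exactly when $j\in\hat{p}$ and $g_j=0$ otherwise, these two sums are precisely $\nomflow(\hat{x},\hat{z})^TI_{\hat{p}\cap\neigh_i^x}\nomflow(\hat{x},\hat{z})$ and $(A\hat{x}-b)^TI_{\neigh_i^z}(A\hat{x}-b)$, so $\beta_i^2\le 2R$ with $R$ their sum. The delicate point here---and the step I expect to be the main obstacle---is justifying the replacement of $g$ by $\tilde{g}$ inside $(Ag)_\ell$ for the active rows, which is exactly what confines the estimate to $i$'s neighborhood and keeps the trigger distributed; the global bound $\|Ag\|_2^2\le\|g\|_2^2$ would be useless since it reintroduces every agent's flow.

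Finally I would close the estimate. In the window of interest, which begins when the mismatch becomes active, $0\le t-T<\tfrac{\nu}{2\sqrt{2}}$ gives $(t-T)^2<\nu^2/8$, so
\[
  \nomflow_i(x(t),z(t))^2=\beta_i^2(t-T)^2<2R\cdot\frac{\nu^2}{8}=\frac{\nu^2}{4}R\le\nu^2 R ,
\]
which is the asserted inequality (with a factor of four to spare, confirming that the stated time window is conservative). This completes the plan.
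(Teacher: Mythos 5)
Your proof is correct and follows essentially the same route as the paper's: both exploit that the flow is constant so that $t \mapsto \nomflow_i(x(t),z(t))$ is exactly affine (the paper phrases this as a Taylor expansion at $T$ with vanishing higher-order terms), use continuity and minimality of $T$ to get $\nomflow_i(x(T),z(T))=0$, localize the constant slope to $\neigh_i^x$ and $\neigh_i^z$ via the sparsity structure of $A$ together with $\rho(A^TA)\le 1$, and conclude from $(t-T)^2 < \nu^2/8$. The only differences are in the estimation details: where the paper bounds the resulting matrix quadratic forms using Young's inequality (Lemma~\ref{lem:bilinear}) and the Cauchy Interlacing Theorem (Theorem~\ref{th:interlacing}), you use entrywise Cauchy--Schwarz on the $i$th column of $A$ with $(A^TA)_{i,i}\le\rho(A^TA)\le 1$---which makes fully explicit the localization step the paper compresses into ``exploiting the consistency between the matrix $A$ and the neighbors of $i$''---and by using $g_i=0$ to drop the identity from $(I+A^TA)$ you obtain the slightly sharper constant $\nu^2/4$ in place of $\nu^2$.
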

\begin{proof}
  We use the shorthand notation $\hat{p} = \sigma(\hat{x},\hat{z})$
  and $p(t) = \sigma(x(t),z(t))$. Since $i \notin \hat{p}$, it must be
  that $\hat{x}_i = 0$ and $\nomflow_i(\hat{x},\hat{z}) <
  0$. Moreover, if $i \in p(T)$, it must be, by continuity of $t
  \mapsto (x(t),z(t))$ and $(x,z) \mapsto \nomflow(x,z)$, that
  $\nomflow_i(x(T),z(T)) = 0$. Let us compute the Taylor expansion of
  $t \mapsto \nomflow_i(x(t),z(t))$ using $t=T$ as the initial
  point. For technical reasons, we actually consider the equivalent
  mapping $t \mapsto I_{\{i\}} \nomflow(x(t),z(t))$ instead,
  \begin{align*}
    I_{\{i\}} \nomflow(x(t),z(t)) & = I_{\{i\}}\nomflow(x(T),z(T)) +
    D_x I_{\{i\}}\nomflow(x(T),z(T))^TF_x(\hat{x},\hat{z})(t-T)
    \\
    & \quad + D_z
    I_{\{i\}}\nomflow(x(T),z(T))^TF_z(\hat{x},\hat{z})(t-T)
    \\
    &= I_{\{i\}}(A^TA+ I)I_{\hat{p}}\nomflow(\hat{x},\hat{z})(t-T) +
    I_{\{i\}}A^T(A\hat{x}-b)(t-T) ,
  \end{align*}
  where the equality holds because the higher order terms are
  zero. Thus,
  \begin{align*}
    \nomflow(x(t),z(t))^TI_{\{i\}} \nomflow(x(t),z(t)) &=
    \nomflow(\hat{x},\hat{z})^TI_{\hat{p}}(A^TA+ I)I_{\{i\}}(A^TA+
    I)I_{\hat{p}}\nomflow(\hat{x},\hat{z})(t-T)^2
    \\
    & \quad + 2 \nomflow(\hat{x},\hat{z})^TI_{\hat{p}}(A^TA+
    I)I_{\{i\}}A^T(A\hat{x}-b)(t-T)^2
    \\
    &\quad + (A\hat{x}-b)^TAI_{\{i\}}A^T(A\hat{x}-b)(t-T)^2 .
  \end{align*}
  Using Lemma~\ref{lem:bilinear} with $\kappa = \frac{1}{2}$ and
  exploiting the consistency between the matrix $A$ and the neighbors
  of $i$, we obtain
  \begin{align*}
    \nomflow(x(t)&,z(t))^TI_{\{i\}} \nomflow(x(t),z(t))
    \\
    &\le 2\nomflow(\hat{x},\hat{z})^TI_{\hat{p}}(A^TA+
    I)I_{\{i\}}(A^TA+ I)I_{\hat{p}}\nomflow(\hat{x},\hat{z})(t-T)^2
    \\
    &\quad + 2(A\hat{x}-b)^TAI_{\{i\}}A^T(A\hat{x}-b)(t-T)^2
    \\
    &\le 2\nomflow(\hat{x},\hat{z})^TI_{\hat{p}\cap
      \neigh^x_i}(A^TA+I)^2I_{\hat{p}\cap
      \neigh^x_i}\nomflow(\hat{x},\hat{z})(t-T)^2
    \\
    &\quad +
    2(A\hat{x}-b)^TI_{\neigh^z_i}AA^TI_{\neigh^z_i}(A\hat{x}-b)(t-T)^2
    \\
    &\le 8\nomflow(\hat{x},\hat{z})^TI_{\hat{p}\cap
      \neigh^x_i}\nomflow(\hat{x},\hat{z})(t-T)^2 +
    2(A\hat{x}-b)^TI_{\neigh^z_i}(A\hat{x}-b)(t-T)^2
    \\
    &\le 8(t-T)^2(\nomflow(\hat{x},\hat{z})^TI_{\hat{p}\cap
      \neigh^x_i}\nomflow(\hat{x},\hat{z}) +
    (A\hat{x}-b)^TI_{\neigh^z_i}(A\hat{x}-b)) .
  \end{align*}
  Using the bound $t-T \le \frac{\nu}{2\sqrt{2}}$ in the statement of
  the result completes the proof.
\end{proof}

The importance of Proposition~\ref{prop:mismatch} comes from the
following observation: given the upper bound on the evolution of the
candidate Lyapunov function~$V$ obtained in
Proposition~\ref{prop:Lie-V}, one can appropriately choose the value
of $\nu$ so that the negative terms in~\eqref{eq:Lie-V} can compensate
for the presence of the last term due to a mode mismatch of finite
time length.  This observation motivates the introduction of the
following trigger sets, which cause neighbors to send synchronized
broadcasts periodically to an agent if its state remains at
zero. First, if an agent $i$'s state is zero and it has not received a
synchronized broadcast from its neighbors for $\tau_i$ time (here,
$\tau_i >0$ is a design parameter), it triggers a broadcast to notify
its neighbors that it requires new states. This behavior is captured
by the trigger set
\begin{align*}
  \dtriggersetrequest := 
  \begin{cases}
    \{ \xi \in \Xi: x_i = 0 \text{ and } s_i \ge \tau_i \}, \;
    &\text{{\rm if $i \le n$,}}
    \\
    \emptyset, \; &\text{{\rm if $i \ge n+1$.}}
  \end{cases}
\end{align*}
where we use the state $s_i$ to denote the time since $i$ has last
sent a broadcast. On the receiving end, if $i$ receives a broadcast
request from a neighbor~$j$, then it should also broadcast
immediately,
\begin{align*}
  \dtriggersetsend := \{\xi \in \Xi :\exists j \in \neigh_i^x \text{
    s.t. } q_{i,j} = 1 \}.
\end{align*}
where $q_{i,j} \in \{0,1\}$ is a state with $q_{i,j} = 1$ indicating
that $j$ has requested a broadcast from $i$.

Our last component of the distributed trigger design addresses the
problem posed by the asynchronism in state broadcasts. In fact, given
that agents determine autonomously when to communicate with their
neighbors, this may cause non-persistence in the resulting network
evolution. As an example, consider a scenario where successive state
broadcasts by one agent cause another neighboring agent to generate a
state broadcast of its own after increasingly smaller time intervals,
and vice versa.  To address this problem, we provide a final component
to the design of the distributed trigger set as follows,
\begin{align*}
  \dtriggersetsynch := \{ \xi \in \Xi: 0 \le r_i \le r^{\min}_i \},
\end{align*}
where $r_i$ represents the time elapsed between when agent $i$
received a state broadcast from a neighbor and $i$'s last
broadcast. We use $r_i = -1$ to indicate that $i$ has not received a
broadcast from a neighbor since its own last state broadcast. The
threshold $r^{\min}_i>0$ is a design parameter (smaller values result
in less frequent updates).  Intuitively, this trigger means that if an
agent broadcasts its state and in turn receives a state broadcast from
a neighbor faster than some tolerated rate, the agent broadcasts its
state immediately again. The effect of this trigger is that, if
broadcasts start occurring too frequently in the network, neighboring
agents' broadcasts synchronize. This emergent behavior is described in
more depth in the proof of Theorem~\ref{th:convergence_dist} later.

Finally, the overall distributed trigger set for each $i \in
\{1,\dots,n+m\}$ is,
\begin{align}
  \dtriggerset := \dtriggersete \cup \dtriggersetzero \cup
  \dtriggersetrequest \cup \dtriggersetsend \cup
  \dtriggersetsynch. \label{eq:trigger_dist}
\end{align}

\subsection{Distributed algorithm and convergence analysis}

We now state the distributed algorithm and its convergence properties
which are the main contributions of this paper. 

\begin{algorithmdefinition}\longthmtitle{Distributed linear
    programming with event-triggered communication}\label{algo}
  For each agent $i \in \{1,\dots,n + m \}$, if $\xi \notin
  \mathcal{T}_i$ then
  \begin{subequations}\label{eq:hybrid3}
    \begin{alignat}{2}
      \dot{x}_i &=
      \begin{cases}
        \nomflow_i(\hat{x},\hat{z}), \hspace{14.3mm} \text{{\rm if
            $\hat{x}_i > 0$,}}
        \\
        \max \{0,\nomflow_i(\hat{x},\hat{z})\}, \; \text{{\rm if
            $\hat{x}_i = 0$,}}
      \end{cases}
      \qquad &&\text{{\rm if $i \le n$}}
      \\
      \dot{z}_{i-n} &= a_{i-n}^T\hat{x}-b_{i-n}, \qquad &&\text{{\rm
          if $i \ge n+1$}}
      \\
      \dot{s}_i &=
      \begin{cases}
        1, \quad \text{{\rm if $s_i < \tau_i$,}}
        \\
        0, \quad \text{{\rm if $s_i \ge \tau_i$,}}
       \end{cases} \qquad &&\text{{\rm
          for all $i$}}
    \end{alignat}
    and, if $\xi \in \mathcal{T}_i$, then
    \begin{alignat}{2}
      \hat{x}_i^+ &= x_i \qquad &&\text{{\rm if $i \le n$}}
      \\
      \hat{z}_{i-n}^+ &= z_{i-n}, \qquad &&\text{{\rm if $i \ge n+1$}}
      \\
      (s_i^+,r_i^+,r_j^+) &= (0,-1,s_j), \qquad &&\text{{\rm for all
          $i$ and all $j \in \neigh_i$}}
      \\
      q_{j,i}^+ &= 1, \qquad &&\text{{\rm if $\xi \in
          \dtriggersetrequest$ and for all $j \in \neigh_i$}}
      \\
      q_{i,j}^+ &= 0, \qquad &&\text{{\rm if $\xi \in
          \dtriggersetsend$ and for all $j \in \neigh_i^x$}}
    \end{alignat}
  \end{subequations}
\end{algorithmdefinition}
The entire network state is given by $\xi \in \Xi$. However, the local
state of an individual agent $i \in \{1,\dots,n\}$ consists of
$x_i,\hat{x}_i,s_i,r_i,$ and $\cup_{j \in
  \neigh^x_i}\{q_{i,j}\}$. Likewise, the local state of agent $i \in
\{n+1,\dots,n+m\}$ consists of $z_{i-n},\hat{z}_{i-n},s_i,r_i,$ and
$\cup_{j \in \neigh^x_i}\{q_{i,j}\}$. These latter agents may be
implemented as virtual agents as described in
Remark~\ref{rem:distributed-implementation}. Then, recalling the
assumptions on local information outlined in
Section~\ref{sec:problem-statement}, it is straightforward to see that
the coordination algorithm~\eqref{eq:hybrid3} can be implemented by
the agents in a distributed way. We are now ready to state our main
convergence result.

\begin{theorem}\longthmtitle{Distributed triggers - convergence and
    persistently flowing solutions}\label{th:convergence_dist}
  For each $i \in \{1,\dots,n+m\}$, let $0 < \mu_i \le \frac{1}{160}$
  and
  \begin{align*}
    0 < r^{\min}_i \le \tau_i < \frac{1}{\sqrt{960 |\neigh_i|
        \max_{j \in \neigh_i}|\neigh_j|}}.
  \end{align*}
  Let $\psi$ be a solution of~\eqref{eq:hybrid3}, with each set
  $\dtriggerset$ defined by~\eqref{eq:trigger_dist}. Then,
  \begin{enumerate}
  \item if $\psi$ is persistently flowing, there exists a point
    $(x_*,z) \in \primalsol \times \reals^m$ such that,
    \begin{gather*}
      (x(t,j),z(t,j)) \rightarrow (x_*,z') \quad \text{as} \quad t+j
      \longrightarrow \infty, \quad (t,j) \in \dom(\psi) ,
    \end{gather*}
  \item if there exists $\delta_P > 0$ such that, for any time
    $(t',j') \in \dom(\psi)$ where $\psi(t',j') \in \dtriggersetzero$
    for some $i \in \until{n}$, it holds that $\psi(t,j) \notin
    \dtriggersetzero$ for all $(t,j) \in ((t',t'+\delta_P] \times
    \naturals) \cap \dom(\psi)$, the solution $\psi$ is persistently
    flowing.
  \end{enumerate}
\end{theorem}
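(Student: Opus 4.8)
The plan is to treat the two claims separately, with claim (i) closely paralleling the proof of Theorem~\ref{th:convergence_prelim} and claim (ii) being the genuinely new part. For claim (i), the crux is again to show that $V$ is non-increasing along~$\psi$; once that is in place, the endgame (monotone convergence forcing the negative terms to zero along the broadcast sequence, identification of the limit set via Lemma~\ref{lem:grad} and Lemma~\ref{lem:lagrangian}, and a Bolzano--Weierstrass plus sublevel-set-invariance argument to pin the limit to a single point) is identical to the centralized case and can be reused. The new difficulty is that the distributed triggers no longer enforce $\sigma(x,z)=\sigma(\hat x,\hat z)$, so the mode-mismatch term $15\,\nomflow(x,z)^T I_{\sigma(x,z)\setminus\sigma(\hat x,\hat z)}\nomflow(x,z)$ in~\eqref{eq:Lie-V} can be nonzero during flow. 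First I would note that $\dtriggersetzero$ still guarantees $\sigma(\hat x,\hat z)\subseteq\sigma(x,z)$, so Proposition~\ref{prop:Lie-V} remains applicable. I would then dispatch the three positive groups of terms in~\eqref{eq:Lie-V} in turn: the error terms $40\,e_x^Te_x+20\,e_z^Te_z$ are dominated by a fraction of the negative terms because flowing means no agent lies in its $\dtriggersete$ set, giving $(e_x)_i^2<\mu_i\,\nomflow_i(\hat x,\hat z)^2$ and $(e_z)_\ell^2<\mu_\ell\,(a_\ell^T\hat x-b_\ell)^2$ with $\mu_i\le\tfrac1{160}$.

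The mode-mismatch term is controlled by Proposition~\ref{prop:mismatch}. The key observation is that the request/send triggers $\dtriggersetrequest$ and $\dtriggersetsend$ guarantee that any agent $i$ with $x_i=0$ receives refreshed neighbor information at least every $\tau_i$ units of time; hence whenever $i\in\sigma(x,z)\setminus\sigma(\hat x,\hat z)$, the mismatch at $i$ has persisted for at most $\tau_i$. Applying Proposition~\ref{prop:mismatch} with elapsed time bounded by $\tau_i$, summing $\nomflow_i(x,z)^2$ over the mismatched indices, and tracking how many times each neighbor contribution is counted (this is where the $|\neigh_i|\max_{j\in\neigh_i}|\neigh_j|$ multiplicity enters), the bound $\tau_i<1/\sqrt{960\,|\neigh_i|\max_{j\in\neigh_i}|\neigh_j|}$ makes the entire mismatch term absorbable into the negative $-\tfrac12\nomflow(\hat x,\hat z)^TI_{\sigma(\hat x,\hat z)}\nomflow(\hat x,\hat z)-\tfrac14(A\hat x-b)^T(A\hat x-b)$ contribution. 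Combined with the error bound above, this yields $\mathcal{L}_FV(x,z)\le 0$ during flow. At jumps $V$ is unchanged since $(x,z)$ never jumps (only $\hat x,\hat z,s,q,r$ do), and at the non-smooth crossing points $V_2$ can only decrease, by the same argument as case (b) of Theorem~\ref{th:convergence_prelim}. Thus $V$ is non-increasing, $\psi$ is bounded, and the centralized endgame applies verbatim.

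For claim (ii), I would establish persistence by exhibiting a uniform dwell time for every trigger type and then showing the reactive triggers cannot chain indefinitely. A direct computation gives the self-dwell of $\dtriggersete$: during flow $\dot{(e_x)}_i=\nomflow_i(\hat x,\hat z)$ and $\dot{(e_z)}_\ell=a_\ell^T\hat x-b_\ell$ are constant, so the threshold $(e_x)_i^2\ge\mu_i\nomflow_i(\hat x,\hat z)^2$ is first met exactly at elapsed time $\sqrt{\mu_i}$ (and $\sqrt{\mu_\ell}$ in the $z$ case), independent of the state. The trigger $\dtriggersetrequest$ has dwell $\tau_i$ because $s_i$ resets to $0$ at each broadcast and grows at unit rate, while $\dtriggersetzero$ is explicitly prevented from causing fast chatter by the standing $\delta_P$ hypothesis. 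The only remaining sources of arbitrarily fast broadcasting are the reactive triggers $\dtriggersetsend$ and $\dtriggersetsynch$. Here the heart of the argument is that $\dtriggersetsynch$ forces synchronization: if a neighbor's broadcast reaches $i$ within $r_i^{\min}$ of $i$'s own last broadcast, i.e.\ $0\le r_i\le r_i^{\min}$, then $i$ re-broadcasts immediately, and the jump update $(r_i^+,r_j^+)=(-1,s_j)$ aligns the two agents' broadcast instants. I would show that such a cascade of reactive broadcasts involves each agent at most once before the participating agents' broadcast times coincide, so it terminates after at most $n+m$ jumps; once a cluster is synchronized every member has $r_i=-1$, no $\dtriggersetsynch$ condition is active, and the next broadcast of the cluster is governed by $\dtriggersete$ or $\dtriggersetrequest$ and hence is at least $\tau_P:=\min_i\min\{\sqrt{\mu_i},r_i^{\min}\}$ away. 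This produces flow intervals of length at least $\tau_P$ recurring infinitely often (condition PFii), or finite-time equilibration (condition PFi), establishing persistence.

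I expect the main obstacle to be the rigorous treatment of the synchronization cascade in claim (ii): precisely formalizing that the reactive $\dtriggersetsynch$ and $\dtriggersetsend$ broadcasts self-terminate after boundedly many jumps and leave behind a synchronized cluster whose subsequent inter-broadcast time is uniformly bounded below, all while correctly propagating the auxiliary states $s_i$, $r_i$, and $q_{i,j}$ through the asynchronous jumps and the use of $r_i^{\min}\le\tau_i$. A secondary delicate point, in claim (i), is justifying that a mode mismatch at $i$ indeed lasts at most $\tau_i$ and accounting for the neighbor and neighbor-of-neighbor multiplicities in the summation of Proposition~\ref{prop:mismatch} so that the factor $960$ in the bound on $\tau_i$ is exactly what is needed to absorb the mismatch term into the negative terms of~\eqref{eq:Lie-V}.
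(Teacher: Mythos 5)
Your treatment of part (i) is essentially the paper's argument: $\sigma(\hat x,\hat z)\subseteq\sigma(x,z)$ is preserved by $\dtriggersetzero$, the error terms are absorbed via the $\dtriggersete$ inequalities with $\mu_i\le\tfrac{1}{160}$, and the mismatch term is absorbed via Proposition~\ref{prop:mismatch} with mismatch duration at most $\tau_i$ (guaranteed by $\dtriggersetrequest$/$\dtriggersetsend$) and the neighbor-multiplicity summation that produces the factor $960$; this is exactly how the paper reaches $\mathcal{L}_FV\le-\epsilon(\cdot)$. One subtlety you gloss over when you say the centralized endgame ``applies verbatim'': in the distributed setting the flow intervals are \emph{not} intervals on which $\mathcal{L}_FV$ exists, because the distributed triggers omit $\ctriggersetsigma$ and so $\sigma(x,z)$ can change mid-flow without any jump. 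The quantified-decrease step of the centralized endgame (the inequality $V(t_{j_{k+1}})\le V(t_{j_k})-(\cdot)\tau_P$) needs the Lie derivative to exist on sub-intervals of uniform length; the paper patches this by observing that at most $n$ indices can be added to $\sigma$ during any flow period (the flow is constant), so $\sigma$ is constant, and hence $\mathcal{L}_FV$ exists, on sub-intervals of length at least $\tau_P/n$ persistently often. Your monotonicity argument alone does not deliver this uniform-length decrease.

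Part (ii) is where you take a genuinely different route, and it contains a concrete gap. You propose a direct argument: per-trigger dwell times plus the claim that a reactive cascade ``involves each agent at most once before the participating agents' broadcast times coincide, so it terminates after at most $n+m$ jumps.'' That claim is false under the stated update rules: if $i$ broadcasts and a neighbor $j$ broadcasts shortly after, the jump sets $r_i^+=s_i\le r_i^{\min}$, so $i$ is retriggered by $\dtriggersetsynch$ and broadcasts a \emph{second} time within the same cascade; whether such ping-pong chains at a common instant terminate depends on delicate semantics of simultaneous jumps (note $r_i=0$ lies in $[0,r_i^{\min}]$), and you correctly flag this as your main obstacle --- but your proof plan needs it, and you give no mechanism for it. The paper avoids ever proving cascade termination by arguing (ii) by \emph{contradiction}: if $\psi$ is not persistently flowing, then for every $\epsilon>0$ all inter-broadcast times eventually drop below $\epsilon$; choosing $\epsilon<\min\{\tfrac{1}{n+1}\min_i r_i^{\min},\min_i\tau_i,\tfrac{1}{n+1}\delta_P,\min_i\sqrt{\mu_i}\}$ and applying a pigeonhole count (some agent broadcasts twice within $\min_i r_i^{\min}$), the $\dtriggersetsynch$ sets force network-wide synchronization of broadcasts from some time on; in the synchronized regime each trigger type is then shown to require at least $\epsilon$ of flow before firing ($\sqrt{\mu_i}$ for $\dtriggersete$ since $(e_x)_i(t,j)=\nomflow_i(\hat x,\hat z)(t-t_j)$, $\tau_i$ for request/send, $\delta_P/(n+1)$ for $\dtriggersetzero$, and $\dtriggersetsynch$ cannot fire at all once synchronized), contradicting $\epsilon$-frequency. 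So the problematic cascade analysis is replaced by a regime in which it is moot. A smaller issue: your uniform constant $\tau_P=\min_i\min\{\sqrt{\mu_i},r_i^{\min}\}$ conflates $r_i^{\min}$ (which is not a dwell time of any trigger) with the actual dwell bounds and omits the $\delta_P$-separation of $\dtriggersetzero$ events, which must appear in any persistence constant.
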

\begin{proof}
  The proof of the convergence result in (i) follows closely the
  argument we employed to establish
  Theorem~\ref{th:convergence_prelim}. One key difference is that the
  intervals on which $\psi$ flows do not necessarily correspond to the
  intervals on which $\mathcal{L}_FV$ exists. This is because the
  value of $\sigma$ may change even though $\psi$ still
  flows. However, since the dynamics $(\dot{x},\dot{z}) =
  F(\hat{x},\hat{z})$ is constant on periods of flow it is easy to see
  that there can be at most $n$ agents added to $\sigma$ in any given
  period of flow. This means that, if $\psi$ is persistently flowing
  according to the characterization (PFii), the Lie derivative
  $\mathcal{L}_FV$ exists persistently often for periods of length
  $\tau_P / n$ (since $\sigma$ must be constant for an interval of
  length at least $\tau_P / n$ persistently often). Thus, let us
  consider a time $(t,j)$ such that $(t,j) \in (t_j,t_j+\tau_p / n )
  \times \{j\} \subset \dom(\psi)$ and $\mathcal{L}_FV(x(t,j),z(t,j))$
  exists. Note that, if $\psi$ is persistently flowing according to
  the characterization (PFi), we may take $\tau_P = \infty$ and the
  following analysis holds. To ease notation, denote $p(t,j) =
  \sigma(x(t,j),z(t,j))$ and $\hat{p}(t,j) =
  \sigma(\hat{x}(t,j),\hat{z}(t,j))$. Then, following the exposition
  in the proof of Theorem~\ref{th:convergence_prelim}, one can see
  that, due to trigger sets $\dtriggersete$ and $\dtriggersetzero$ and
  the conditions on $\mu_i$,
  \begin{align}\label{eq:convergence2_bound}
    \mathcal{L}_FV(x(t,j),z(t,j)) &\le
    -\frac{1}{4}\nomflow(\hat{x}(t,j),\hat{z}(t,j))^TI_{\hat{p}(t,j)}
    \nomflow( \hat{x}(t,j),\hat{z}(t,j)) \nonumber
    \\
    & \quad - \frac{1}{8}(A\hat{x}(t,j)-b)^T(A\hat{x}(t,j)-b)
    \nonumber
    \\
    &\quad + 15 \nomflow(x(t,j),z(t,j)) I_{p(t,j) \setminus
      \hat{p}(t,j)} \nomflow(x(t,j),z(t,j)) .
  \end{align}
  We focus on the last term, which is the only positive one. If $i \in
  p(t,j) \setminus \hat{p}(t,j)$, then it must be that $\hat{x}_i = 0$
  and thus $i$ is receiving state broadcasts from its neighbors every
  $\tau_i$ seconds by design of $\dtriggersetrequest$ and
  $\{\mathcal{T}^{\operatorname{send}}_j\}_{j \in
    \neigh_i}$. Therefore, the maximum amount of time that any $i$
  remains in $p(t,j) \setminus \hat{p}(t,j)$ is $\tau_i$
  seconds. Since each $\tau_i < \tfrac{1}{\sqrt{960 |\neigh_i|
      \max_{j \in \neigh_i}|\neigh_j|}}$, we apply
  Proposition~\ref{prop:mismatch} using $\nu < \tfrac{2
    \sqrt{2}}{\sqrt{960 |\neigh_i| \max_{j \in \neigh_i}|\neigh_j|}}$
  to obtain
  \begin{multline*}
    \nomflow_i(x(t,j),z(t,j))^2 < \frac{1}{120 |\neigh_i| \max_{j
        \in \neigh_i}|\neigh_j|}
    \big(\nomflow(\hat{x}(t,j),\hat{z}(t,j))^TI_{\hat{p}(t,j) \cap
      \neigh^x_i} \nomflow(\hat{x}(t,j),\hat{z}(t,j))
    \\
    + (A\hat{x}(t,j)-b)^TI_{\neigh^z_i}(A\hat{x}(t,j)-b) \big) .
  \end{multline*}
  From the above bound it is clear that
  \begin{align*}
    &\nomflow(x(t,j),z(t,j)) I_{p(t,j) \setminus \hat{p}(t,j)}
    \nomflow(x(t,j),z(t,j))
    \\
    &< \frac{1}{120} \sum_{i \in p(t,j) \setminus \hat{p}(t,j)}
    \frac{1}{|\neigh_i| \max_{j \in \neigh_i}|\neigh_j|}
    \big(\nomflow(\hat{x}(t,j),\hat{z}(t,j))^TI_{\hat{p}(t,j) \cap
      \neigh^x_i} \nomflow(\hat{x}(t,j),\hat{z}(t,j))
    \\
    & \hspace{60mm} +
    (A\hat{x}(t,j)-b)^TI_{\neigh^z_i}(A\hat{x}(t,j)-b) \big)
    \\
    &< \frac{1}{120} \sum_{i \in \{1,\dots,n\}} \frac{1}{|\neigh_i|}
    \sum_{k \in \neigh_j} \frac{1}{|\neigh_j|}
    \big(\nomflow(\hat{x}(t,j),\hat{z}(t,j))^TI_{\hat{p}(t,j)}
    \nomflow(\hat{x}(t,j),\hat{z}(t,j))
    \\
    & \hspace{50mm} + (A\hat{x}(t,j)-b)^T(A\hat{x}(t,j)-b) \big)
    \\
    &< \frac{1}{120}\big( \nomflow(\hat{x}(t,j),\hat{z}(t,j))^TI_{
      \hat{p}(t,j)} \nomflow(\hat{x}(t,j),\hat{z}(t,j)) +
    (A\hat{x}(t,j)-b)^T(A\hat{x}(t,j)-b) \big) ,
  \end{align*}
  which, when combined with~\eqref{eq:convergence2_bound}, reveals
  that there exists some $\epsilon > 0$ such that
  \begin{align*}
    &\mathcal{L}_FV(x(t,j),z(t,j))
    \\
    &\le -\epsilon \big( \nomflow(\hat{x}(t,j),\hat{z}(t,j))^TI_{
      \hat{p}(t,j)} \nomflow(\hat{x}(t,j),\hat{z}(t,j)) +
    (A\hat{x}(t,j)-b)^T(A\hat{x}(t,j)-b) \big) .
  \end{align*}
  The remainder of the convergence proof now follows in the same way
  as the proof of Theorem~\ref{th:convergence_prelim}.
  
  Next, we prove (ii) by contradiction. Suppose that the conditions in
  (ii) are satisfied but $\psi$ is not persistently flowing. Then, for
  any $\epsilon > 0$, there exists $T_{\epsilon}$ such that for every
  $(t,j) \in \dom(\psi)$ with $t+j\ge T_{\epsilon}$, the time between
  state broadcasts is less than $\epsilon$. Choose
  \begin{align*}
    \epsilon < \min\Big\{\frac{1}{n+1}\min_{i}
    r^{\min}_i,\min_i\tau_i,\frac{1}{n+1}\delta_P,\min_i \sqrt{\mu_i}
    \Big\}.
  \end{align*}
  Then, we can show that all the state broadcasts in the network are
  synchronized from $T_{\epsilon}$ time forward due to the trigger
  sets $\dtriggersetsynch$: by our choice of
  $\epsilon$, there are at least $(n+1)\epsilon$ broadcasts every
  $\min_ir^{\min}_i$ seconds. This means that at least one agent has
  broadcast twice in the last $\min_ir^{\min}_i$ seconds. Accordingly,
  all the neighbors of that agent synchronously broadcast their state
  at the same time due to the trigger
  sets~$\dtriggersetsynch$. Propagating this logic
  to the second-hop neighbors and so on, one can see that the entire
  network is synchronously broadcasting its state and this will be
  true for all $t+j \ge T_{\epsilon}$. Let us then explore the
  possible causes of the next broadcast. Clearly, the next broadcast
  will not be due to any $\dtriggersetsynch$, since
  agent broadcasts are synchronized already. Likewise, it will not be
  due to any $\dtriggersetrequest$ or
  $\dtriggersetsend$ since, by construction,
  $\min_i\tau_i > \epsilon$ time must have elapsed before
  $\dtriggersetrequest$ is enabled by any agent. By
  assumption, only $n$ broadcasts due to the $\dtriggersetzero$ can
  occur in $\delta_P$ seconds.  Without loss of generality, we can
  assume that the next broadcast due to one of $\dtriggersetzero$ does
  not occur for another $\frac{\delta_P}{n+1} > \epsilon$ time. This
  leaves the $\dtriggersete$ trigger sets. Let us look at the
  evolution of $(e_x)_i$ for any given $i \in \{1,\dots,n\}$. Since
  $i$ has not received a broadcast from its neighbors, the evolution
  of $(e_x)_i$ is
  \begin{align*}
    (e_x)_i(t,j) = \nomflow_i(\hat{x},\hat{z})(t-t_j) .
  \end{align*}
  Therefore, for $\dtriggersete$ to have been enabled, $\min_i
  \sqrt{\mu_i} > \epsilon$ time must have elapsed (the same conclusion
  holds for $i \in \{n+1,\dots,n+m\}$). This means that the next
  broadcast is not triggered in $\epsilon$ time, contradicting the
  definition of $\epsilon$, and this completes the proof.
\end{proof}

As shown in the proof of Theorem~\ref{th:convergence_dist}, the
triggers defined by $\dtriggersete$, $\dtriggersetrequest$,
$\dtriggersetsend$, and $\dtriggersetsynch$ do not cause
non-persistency in the solutions of~\eqref{eq:hybrid3}. If we had
used~\eqref{eq:linprog} in our derivation instead of~\eqref{eq:pert},
the resulting design would not have enjoyed this attribute,
cf. Remark~\ref{re:motivation-qp}. In our experience, the hypothesis
in Theorem~\ref{th:convergence_dist}(ii) is always satisfied with
$\delta_P = \infty$, which suggests that all solutions
of~\eqref{eq:hybrid3} are persistently flowing.

\begin{remark}\longthmtitle{Robustness to
    perturbations} \label{rem:robustness} {\rm We briefly comment here
    on the robustness properties of the coordination
    algorithm~\eqref{eq:hybrid3} against perturbations.  These may
    arise in the execution as a result of communication noise,
    measurement error, modeling uncertainties, or disturbances in the
    dynamics, among other reasons.  A key advantage of modeling the
    execution of the coordination algorithm in the hybrid systems
    framework described Section~\ref{sec:hybrid-systems} is that there
    exist a suite of established robustness characterizations for such
    systems. In particular, it is fairly straightforward to verify
    that~\eqref{eq:hybrid3} is a `well-posed' hybrid system, as
    defined in~\cite{RG-RGS-ART:09}, and as a consequence of this
    fact, the convergence properties stated in
    Theorem~\ref{th:convergence_dist}(i) remain valid if the hybrid
    system~\eqref{eq:hybrid3} is subjected to sufficiently small
    perturbations (see e.g.,~\cite[Theorem 7.21]{RG-RGS-ART:09}).
    Moreover, in our previous work~\cite{DR-JC:13-tac}, we have shown
    that the continuous-time dynamics~\eqref{eq:disc-dyn} (upon which
    our distributed algorithm with event-triggered communication is
    built) is integral-input-to-state stable, and thus robust to
    disturbances of finite energy.  We believe that the coordination
    algorithm~\eqref{eq:hybrid3} inherits this desirable property,
    although we do not characterize this explicitly here for reasons
    of space.  Nevertheless, Section~\ref{sec:sims} below illustrates
    the algorithm performance under perturbation in simulation.}
  \oprocend
\end{remark}

\section{Simulations}\label{sec:sims}

Here we illustrate the execution of the coordination algorithm
\eqref{eq:hybrid3} with event-triggered communication in a multi-agent
assignment example. The multi-agent assignment problem we consider is
a resource allocation problem where $N$ tasks are to be assigned to
$N$ agents. Each potential assignment of a task to an agent has an
associated benefit and the global objective of the network is to
maximize the sum of the benefits in the final assignment. The
assignment of an agent to a task is managed by a broker and the set of
all brokers use the strategy~\eqref{eq:hybrid3} to find the optimal
assignment. Presumably, a broker is only concerned with the
assignments of the agent and task that it manages and not the
assignments of the entire network. Additionally, there may exist
privacy concerns that limit the amount of information that a network
makes available to any individual broker. These are a couple of
reasons why a distributed algorithm is well-suited to solve this
problem.

\begin{figure*}[hbt!]
  \centering
  \subfigure[Assignment graph]{\includegraphics[trim=0 -35 0 0,width=0.3\linewidth]{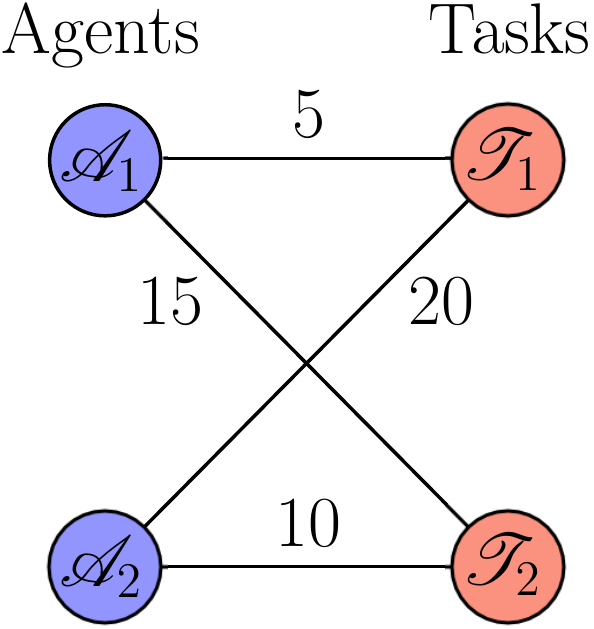}} \hspace{20mm}
  \subfigure[Communication between brokers]{\includegraphics[width=0.4\linewidth]{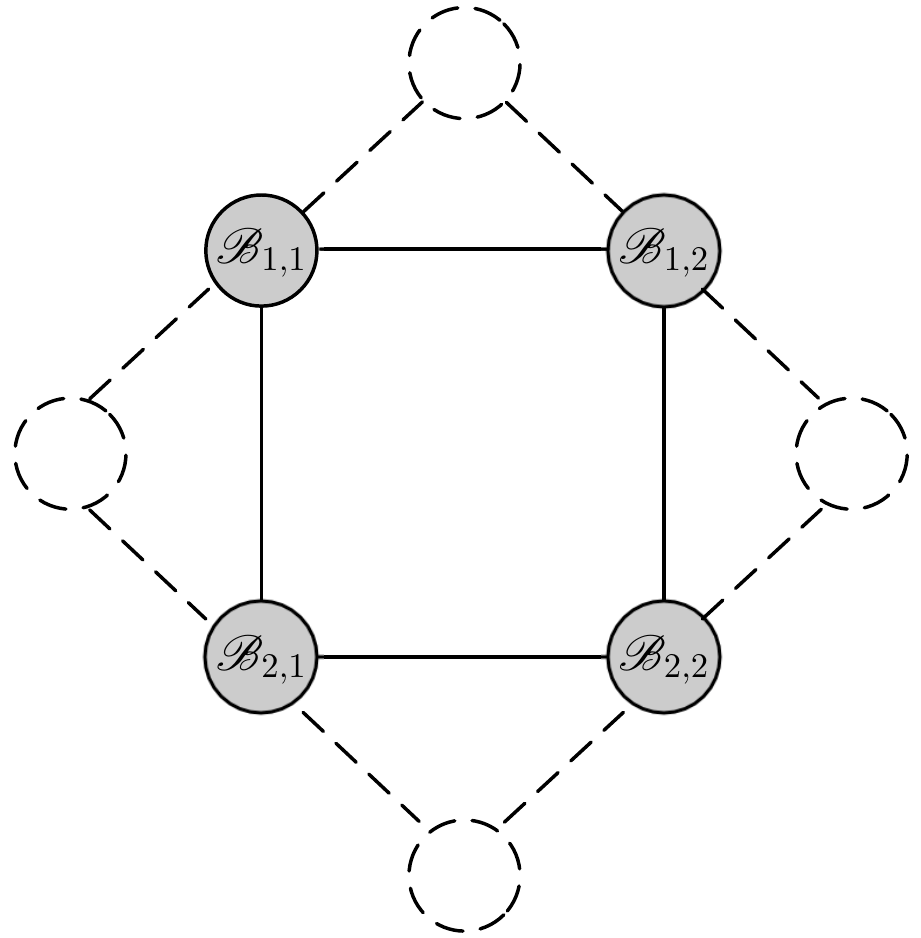}}
  \caption{(a) shows the assignment graph with agents $\mathscr{A}_1$
    and $\mathscr{A}_2$ in blue, tasks $\mathscr{T}_1$ and
    $\mathscr{T}_2$ in red, and the benefit of a potential assignment
    as edge weights.  (b) shows the connectivity among brokers. Broker
    $\mathscr{B}_{i,j}$ is responsible for determining the potential
    assignment of task $\mathscr{T}_j$ to agent $\mathscr{A}_i$.  The
    dashed nodes represent the virtual brokers whose states correspond
    to the components of the Lagrange multipliers~$z$
    in~\eqref{eq:hybrid3}, see
    Remark~\ref{rem:distributed-implementation}.}\label{fig:sim_graphs}
\end{figure*}

We consider an assignment problem with $2$ agents (denoted by
$\mathscr{A}_1$ and $\mathscr{A}_2$) and $2$ tasks (denoted by
$\mathscr{T}_1$ and $\mathscr{T}_2$) as shown in
Figure~\ref{fig:sim_graphs}(a). The assignment problem is to be solved
by a set of $4$ brokers as shown in Figure~\ref{fig:sim_graphs}(b). In
general, the number of brokers is the number of edges in the
assignment graph. Broker $\mathscr{B}_{i,j}$ is responsible for
determining the potential assignment of task $\mathscr{T}_j$ to agent
$\mathscr{A}_i$ and has state $x_{i,j} \in \{0,1\}$. Here, $x_{i,j} =
1$ means that task $\mathscr{T}_j$ is assigned to agent
$\mathscr{A}_i$ (with associated benefit $c_{i,j} \in
\realsnonnegative$) and $x_{i,j} = 0$ means that they are not assigned
to each other.  We formulate the multi-agent assignment problem as the
following optimization problem,
\begin{subequations}\label{eq:assignment_prob-integer}
  \begin{alignat}{2}
    &\max && \quad c_{1,1} x_{1,1} + c_{1,2} x_{1,2} + c_{2,1} x_{2,1}
    + c_{2,2} x_{2,2}
    \\
    &\hspace{1.5mm}\text{s.t.} && \quad x_{1,1} + x_{1,2} =
    1 \label{eq:sim_con1}
    \\
    & && \quad x_{2,1} + x_{2,2} = 1 \label{eq:sim_con2}
    \\
    & && \quad x_{1,1} + x_{2,1} = 1 \label{eq:sim_con3}
    \\
    & && \quad x_{1,2} + x_{2,2} = 1 \label{eq:sim_con4}
    \\
    & && \quad x_{1,1},x_{1,2},x_{2,1},x_{2,2} \in \{0,1\}
    . \label{eq:sim_con5}
  \end{alignat}
\end{subequations}
Constraints~\eqref{eq:sim_con1}-\eqref{eq:sim_con2}
(resp.~\eqref{eq:sim_con3}-\eqref{eq:sim_con4}) ensure that each agent
(resp. task) is assigned to one and only one task (resp. agent). Note
that the connectivity between brokers shown in
Figure~\ref{fig:sim_graphs}(b) is consistent with the requirements for
a distributed implementation as specified by the constraint equations
of~\eqref{eq:assignment_prob-integer}.  It is known, see
e.g.,~\cite{AS:00}, that the relaxation $x_{i,j} \ge 0$ of the
constraints~\eqref{eq:sim_con5} gives rise to a linear program with an
optimal solution that satisfies $x_{i,j} \in \{0,1\}$. Thus, for our
purposes, we solve instead the following linear program
\begin{subequations}\label{eq:assignment_prob}
  \begin{alignat}{2}
    &\min && \quad -5 x_{1,1} - 15 x_{1,2} - 20 x_{2,1} - 10 x_{2,2}
    \\
    &\hspace{1.5mm}\text{s.t.} && \quad x_{1,1} + x_{1,2} = 1
    \\
    & && \quad x_{2,1} + x_{2,2} = 1
    \\
    & && \quad x_{1,1} + x_{2,1} = 1
    \\
    & && \quad x_{1,2} + x_{2,2} = 1
    \\
    & && \quad x_{1,1},x_{1,2},x_{2,1},x_{2,2} \ge 0 ,
  \end{alignat}
\end{subequations}
where we have also converted the maximization into a minimization by
considering the negative of the objective function and substituted the
values of the benefits given in
Figure~\ref{fig:sim_graphs}(a). Clearly, the linear
program~\eqref{eq:assignment_prob} is in standard form. Its solution
set is $\primalsol = \{x^*\}$, with $x^*=
(x_{1,1}^*,x_{1,2}^*,x_{2,1}^*,x_{2,2}^*) = (0,1,1,0)$, corresponding
to the optimal assignment consisting of the pairings
$(\mathscr{A}_1,\mathscr{T}_2)$ and $(\mathscr{A}_2,\mathscr{T}_1)$.

\begin{figure*}[hbt!]
  \centering
  \subfigure[Broker state trajectories]{\includegraphics[width=0.49\linewidth]{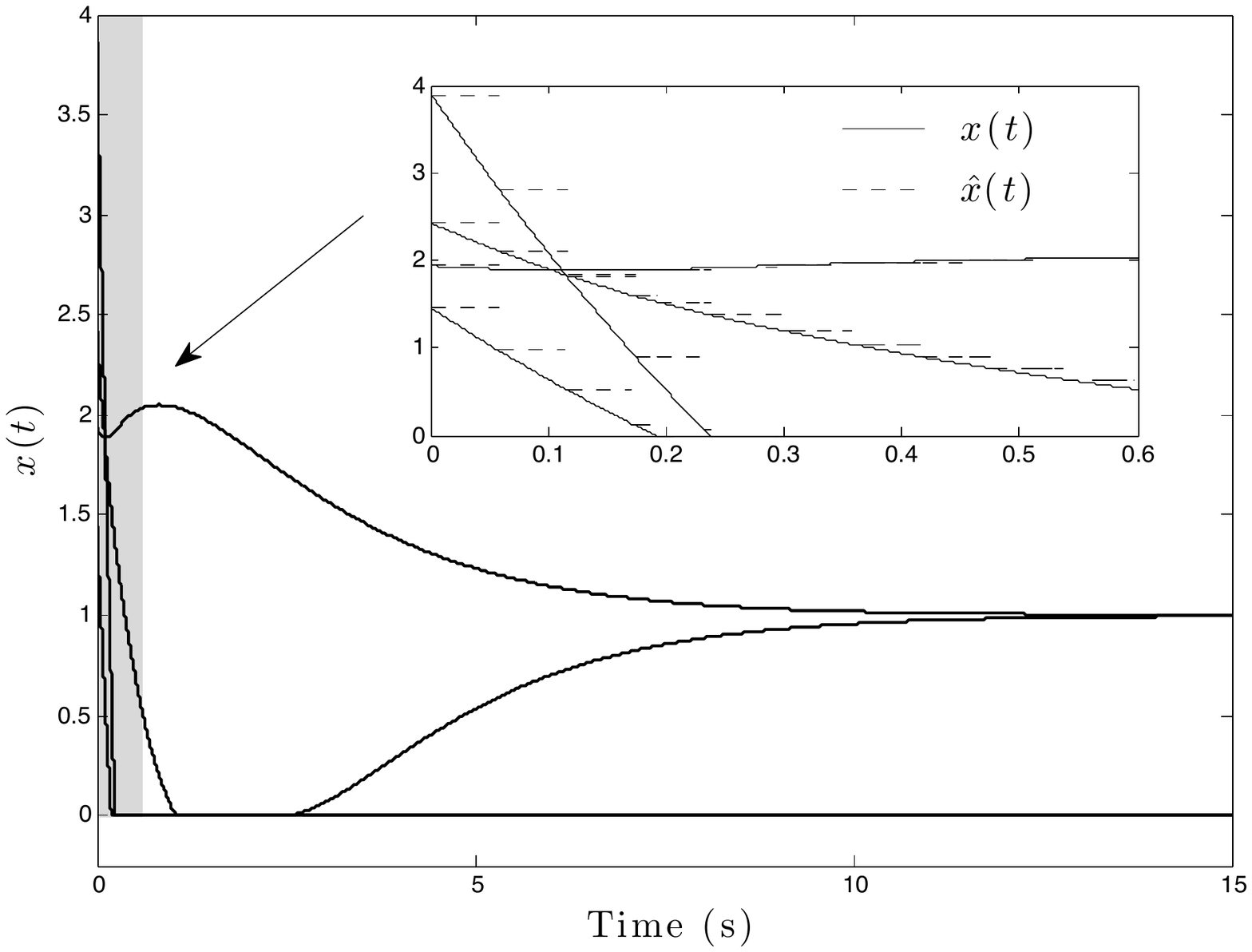}}
  \subfigure[Virtual broker state trajectories]{\includegraphics[width=0.49\linewidth]{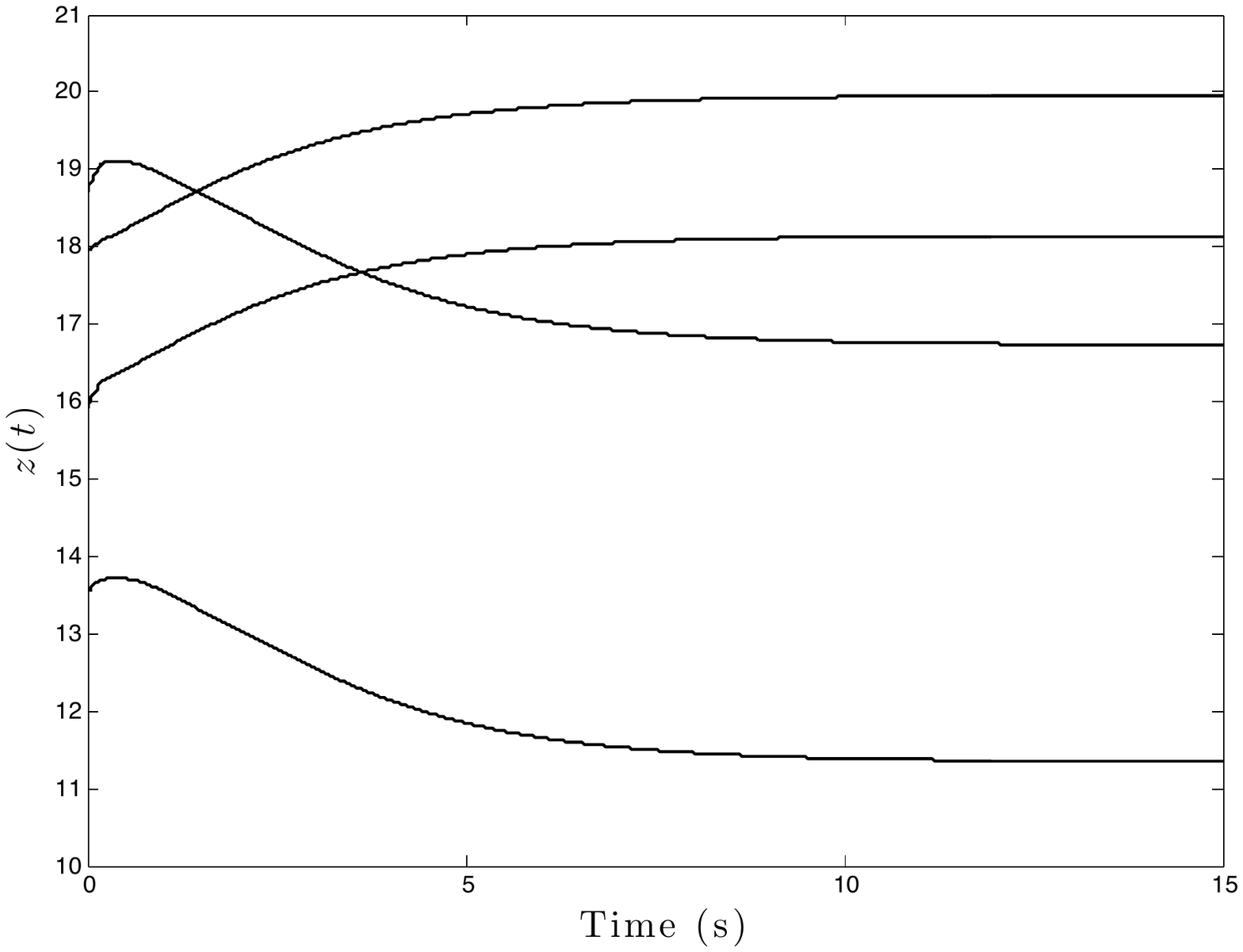}}
  \subfigure[Evolution of $V$]{\includegraphics[width=0.49\linewidth]{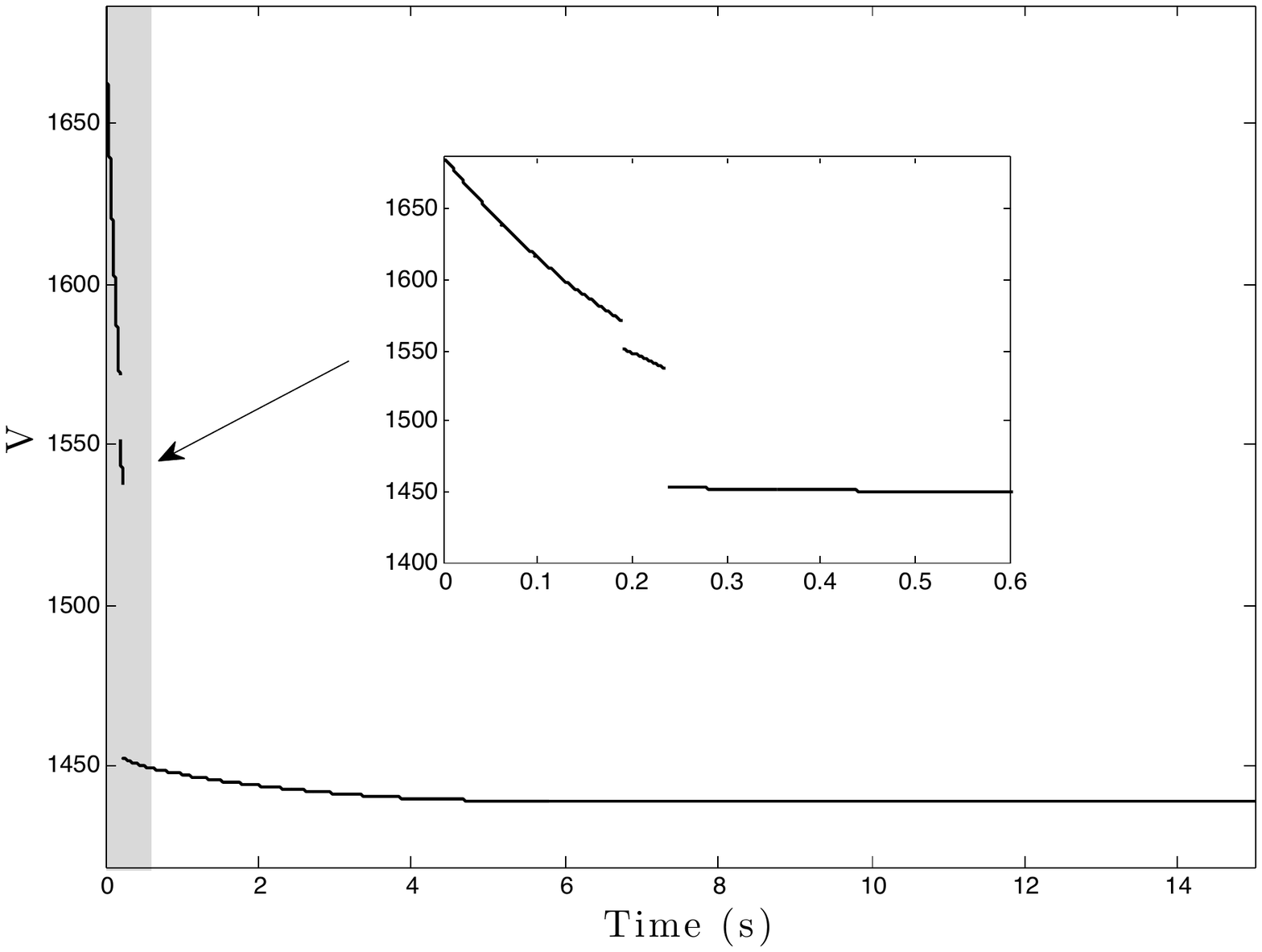}}
  \subfigure[Cumulative number of broadcasts]{\includegraphics[width=0.49\linewidth]{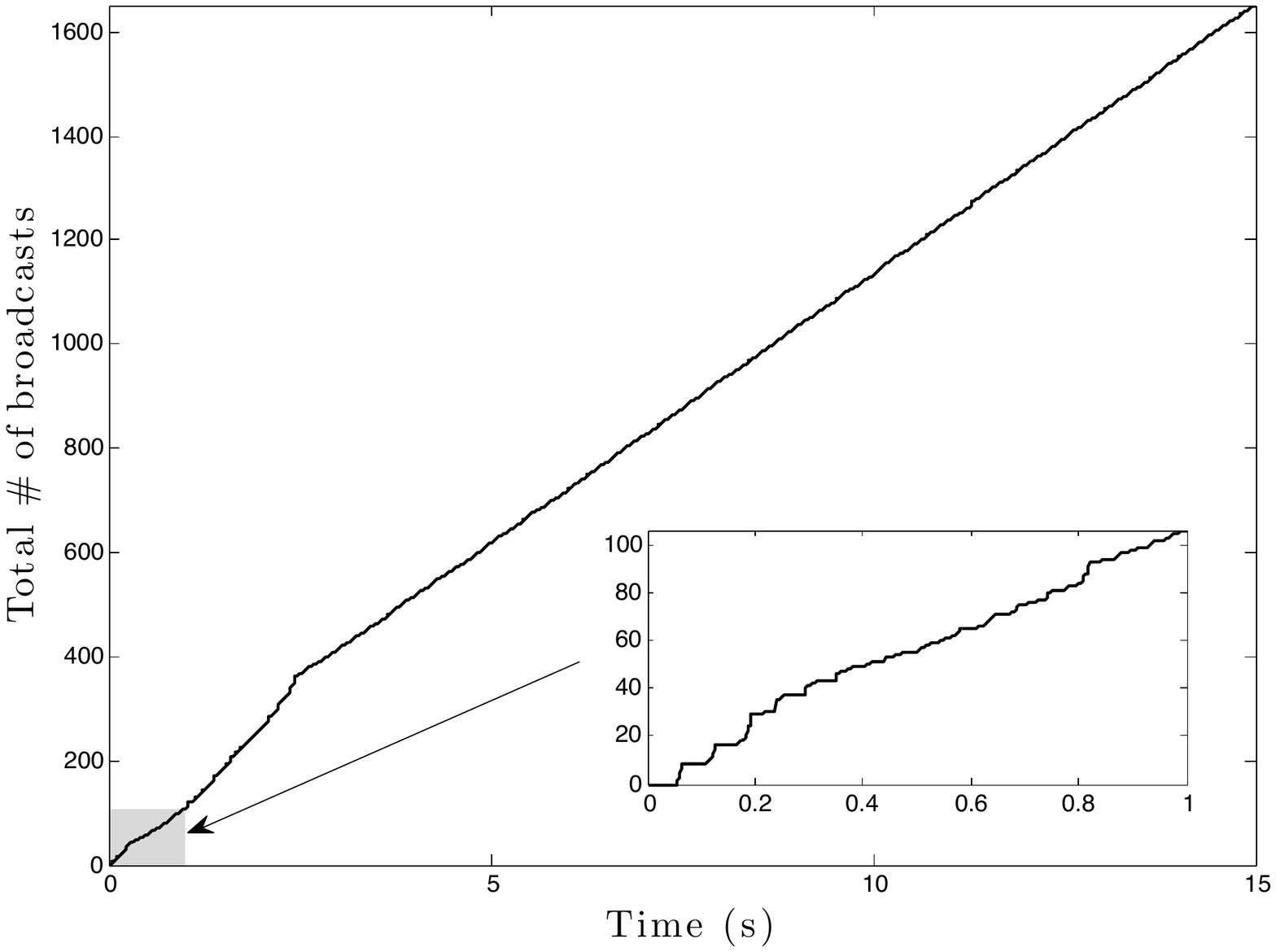}}
  \caption{Simulation results of brokers
    implementing~\eqref{eq:hybrid3} to solve the multi-agent
    assignment problem~\eqref{eq:assignment_prob}. (a) shows the state
    trajectories of the brokers, with an inlay displaying the
    transient response in detail. The brokers' state is $x =
    (x_{1,1},x_{1,2},x_{2,1},x_{2,2})$ and the inlay also shows the
    evolution of the broadcast states, $\hat{x} =
    (\hat{x}_{1,1},\hat{x}_{1,2},\hat{x}_{2,1},\hat{x}_{2,2})$ in
    dashed lines. The aggregate of the brokers' states converge to the
    unique solution $\primalsol = \{(0,1,1,0)\}$. (b) shows the
    evolution of the virtual brokers' states. The Lyapunov function
    $V$ is discontinuous but decreasing, as evidenced in (c). The
    cumulative number of broadcasts appears roughly linear and the
    execution is clearly persistently flowing. Each inlay shows the
    transient in detail.} \label{fig:sims}
\end{figure*}

Figure~\ref{fig:sims} shows the group of brokers executing the
distributed coordination algorithm~\eqref{eq:hybrid3} with
event-triggered communication.  Figure~\ref{fig:noisy_sims}
illustrates the algorithm performance in the presence of additive
white noise on the state broadcasts. The convergence in this case
shows the algorithm robustness to sufficiently small disturbances, as
pointed out in Remark~\ref{rem:robustness}.

\begin{figure*}[hbt!]
  \centering
 \subfigure[Noisy broker state trajectories]{\includegraphics[width=0.49\linewidth]{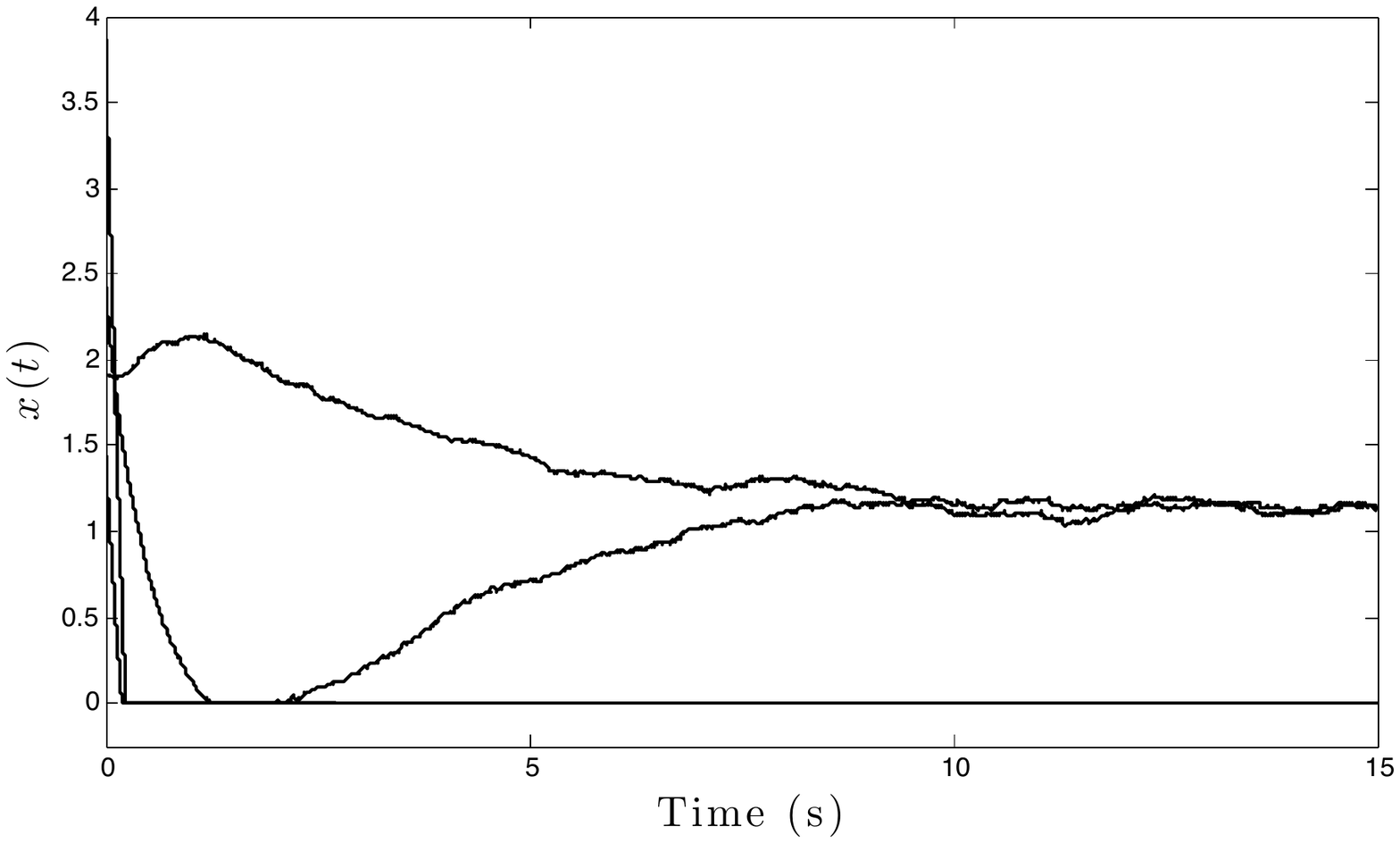}}
 \subfigure[Noisy virtual broker state trajectories]{\includegraphics[width=0.49\linewidth]{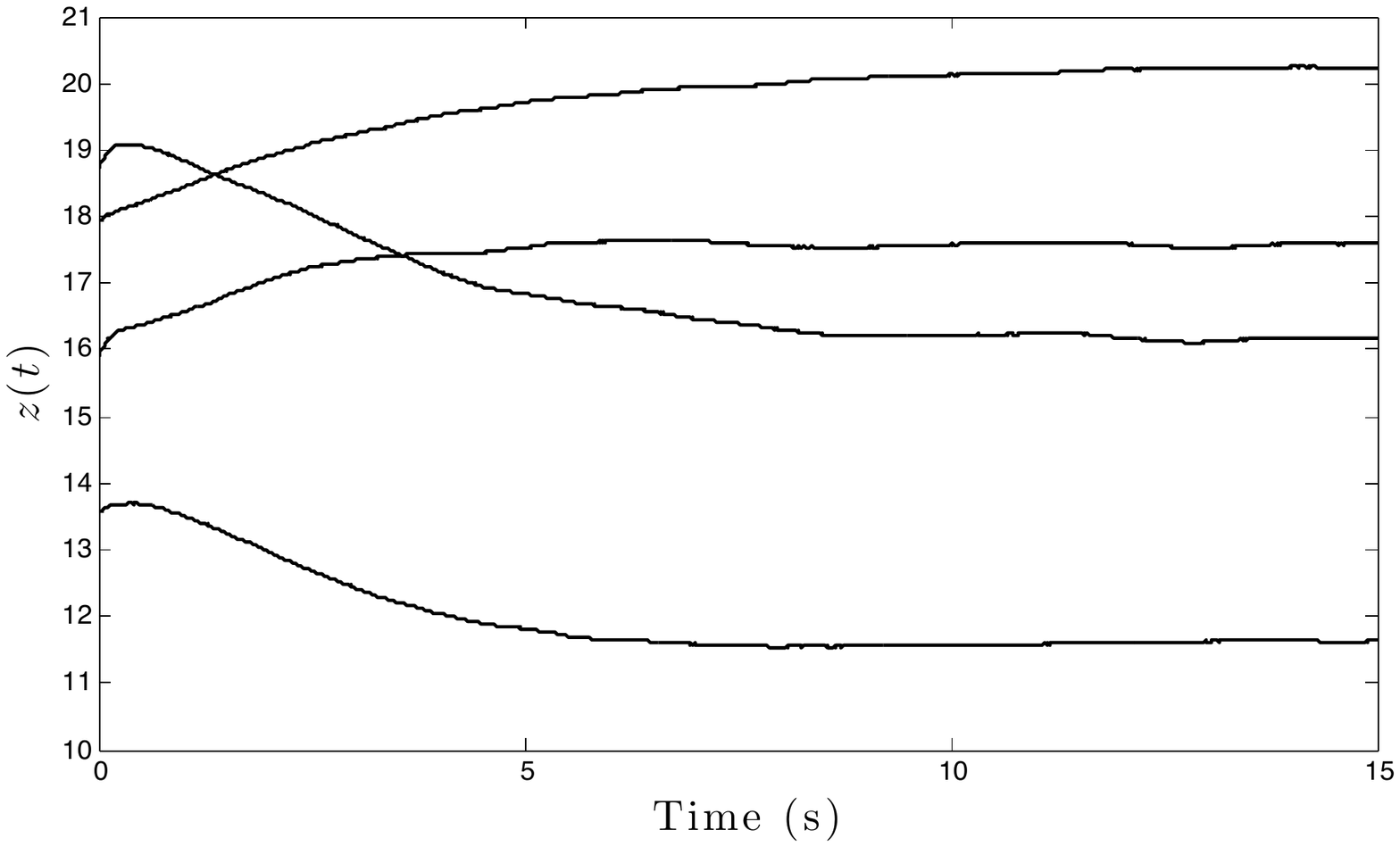}}
 \caption{Simulation results of brokers
   implementing~\eqref{eq:hybrid3} to solve the multi-agent assignment
   problem~\eqref{eq:assignment_prob} under additive noise in the
   communication channels.  In this simulation, broadcasts of
   information are corrupted by noise which is normally distributed
   with zero mean and standard deviation~$1$.  The network state
   converges to a neighborhood of the optimal solution and the optimal
   assignment can easily be deduced.} \label{fig:noisy_sims}
\end{figure*}

\myclearpage

\section{Conclusions and future work}\label{sec:conclusions}

We have studied the design of distributed algorithms for networks of
agents that seek to collectively solve linear programs in standard
form and rely on discrete-time communication. Our algorithmic solution
has agents executing a distributed continuous-time dynamics and
deciding in an opportunistic and autonomous way when to broadcast
updated state information to their neighbors. Our methodology combines
elements from linear programming, switched and hybrid systems,
event-triggered control, and Lyapunov stability theory to provide
provably correct centralized and distributed strategies.  We have
rigorously characterized the asymptotic convergence of persistently
flowing executions to a solution of the linear program. We have also
identified a sufficient condition for executions to be persistently
flowing, and based on it, we conjecture that they all are.  Future
work will be devoted to establish that all solutions are persistently
flowing, rigorously characterize the input-to-state stability
properties of the proposed algorithm, extend our approach to
event-triggered strategies for general switched systems, and implement
the results on a multi-agent testbed.

\Appendix\section*{}

The following two results are used extensively in the proof of
Proposition~\ref{prop:Lie-V} and elsewhere in the paper.

\begin{lemma}\longthmtitle{Young's
    inequality~\cite{GHH-JEL-GP:52}}\label{lem:bilinear}  
  Let $d_1,d_2 \in \naturals$ and $\mu \in \reals^{d_1}$, $M \in
  \reals^{d_1 \times d_2}$, $\nu \in \reals^{d_2}$. Then, for any
  $\kappa > 0$,
  \begin{align*}
    \mu^T M \nu &\le \frac{\kappa}{2} \nu^T M^T M \nu +
    \frac{1}{2\kappa} \mu^T\mu.
  \end{align*}
\end{lemma}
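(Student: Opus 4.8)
The plan is to recognize the bilinear expression $\mu^T M \nu$ as the standard Euclidean inner product $\mu^T(M\nu)$ between the two vectors $\mu$ and $w := M\nu$, both living in $\reals^{d_1}$, and then apply a completion-of-squares argument. This reduces the matrix-valued claim to the familiar scalar Young's inequality $2ab \le \kappa b^2 + \kappa^{-1} a^2$ (equivalently, to the nonnegativity of a perfect square), now applied to vectors rather than scalars.

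Concretely, I would introduce the parameter $\kappa > 0$ asymmetrically so that it attaches to the $M\nu$ term, and consider the manifestly nonnegative quantity $\tfrac{1}{2}\NormTwo{\sqrt{\kappa}\, M\nu - \kappa^{-1/2}\mu}^2 \ge 0$. Expanding this square produces three terms: the cross term reproduces $-\mu^T M \nu$, while the two diagonal terms give precisely $\tfrac{\kappa}{2}\nu^T M^T M \nu$ and $\tfrac{1}{2\kappa}\mu^T\mu$. Rearranging the resulting inequality to isolate $\mu^T M \nu$ on one side yields exactly the asserted bound. Because the square is nonnegative for every choice of $\mu$ and $\nu$ and every $\kappa > 0$, the inequality holds unconditionally.

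The only points requiring attention are routine bookkeeping: verifying $(M\nu)^T(M\nu) = \nu^T M^T M \nu$, which is immediate from the definition of the transpose, and tracking the placement of $\kappa$ versus $1/\kappa$ so the two quadratic terms carry the stated coefficients. There is no substantive obstacle here; in particular, no hypotheses on $M$ are needed (it need not be square, symmetric, or of full rank), since the argument rests solely on the positive semidefiniteness of a squared norm.
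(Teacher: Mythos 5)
Your proof is correct. Note that the paper itself offers no proof of this lemma---it is stated in the appendix and attributed to the literature (Hardy, Littlewood, and P\'olya)---so there is no in-paper argument to compare against; your completion-of-squares derivation, expanding $\tfrac{1}{2}\NormTwo{\sqrt{\kappa}\,M\nu - \kappa^{-1/2}\mu}^2 \ge 0$ and rearranging, is the standard and fully rigorous way to establish it, and you are right that no hypotheses on $M$ beyond its dimensions are needed.
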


\begin{theorem}\longthmtitle{Cauchy Interlacing Theorem~\cite[Theorem
    4.3.15]{RAH-CRJ:85}}\label{th:interlacing}
  For a matrix $0 \preceq A \in \reals^{d \times d}$, let $0 \le
  \lambda_1 \le \dots \le \lambda_d$ denote its eigenvalues.  For $p
  \in \{1,\dots,d\}$, let $A_{p}$ be the matrix obtained by zeroing
  out the $p^{\text{th}}$ row and column of~$A$, and let $0 = \mu_1
  \le \dots \le \mu_d$ denote its eigenvalues. Then $\mu_1 \le
  \lambda_1 \le \mu_2 \le \lambda_{2} \le \dots \le \mu_{d} \le
  \lambda_{d}$.
\end{theorem}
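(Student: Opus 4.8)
The plan is to reduce the statement to the classical Cauchy interlacing between a symmetric matrix and one of its $(d-1) \times (d-1)$ principal submatrices, and then account for the extra zero eigenvalue introduced by \emph{zeroing} (rather than deleting) the $p$-th row and column. First I would observe that $A_p e_p = 0$, where $e_p \in \reals^d$ is the $p$-th standard basis vector, so $A_p$ always has $0$ as an eigenvalue. Setting $W = \setdef{x \in \reals^d}{x_p = 0}$, the subspace $W$ is invariant under $A_p$, and on $W$ the quadratic form $x \mapsto x^TA_px$ agrees with $x \mapsto x^TAx$ (the $p$-th row and column of $A$ multiply $x_p = 0$). Thus the restriction of $A_p$ to $W$ is, under the natural isomorphism $W \cong \reals^{d-1}$, exactly the principal submatrix $\tilde A$ obtained by deleting the $p$-th row and column of $A$, and the spectrum of $A_p$ is the spectrum of $\tilde A$ together with one additional $0$.

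Because $A \succeq 0$, the principal submatrix $\tilde A$ is also positive semidefinite, so its eigenvalues $\beta_1 \le \dots \le \beta_{d-1}$ are all nonnegative. Hence the extra zero is the smallest eigenvalue of $A_p$, yielding $\mu_1 = 0$ and $\mu_{k+1} = \beta_k$ for $k \in \{1,\dots,d-1\}$. It therefore suffices to prove the interlacing $\lambda_k \le \beta_k \le \lambda_{k+1}$ for each $k$, together with the trivial inequality $\mu_1 = 0 \le \lambda_1$, which again follows from $A \succeq 0$. Chaining these gives precisely $\mu_1 \le \lambda_1 \le \mu_2 \le \lambda_2 \le \dots \le \mu_d \le \lambda_d$.

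The core step, the interlacing between $A$ and $\tilde A$, I would establish via the Courant--Fischer min-max characterization. Writing the Rayleigh quotient $R(x) = (x^TAx)/(x^Tx)$, one has $\lambda_k = \min_{\dim S = k}\max_{0 \ne x \in S} R(x)$, and the same formula computes $\beta_k$ when the minimization is restricted to subspaces $S \subseteq W$; restricting the feasible subspaces can only raise the minimum, so $\beta_k \ge \lambda_k$. For the upper bound I would use the dual form $\beta_k = \max_{\dim S = d-k,\, S \subseteq W}\min_{0 \ne x \in S} R(x)$ and compare it with $\lambda_{k+1} = \max_{\dim S = d-k}\min_{0 \ne x \in S} R(x)$; restricting to subspaces of $W$ can only lower the maximum, giving $\beta_k \le \lambda_{k+1}$. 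Substituting $\mu_{k+1} = \beta_k$ then assembles the full chain.

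The main obstacle I anticipate is bookkeeping rather than depth: one must carefully track the index shift caused by inserting the zero eigenvalue at the bottom of the spectrum of $A_p$, and verify that positive semidefiniteness is exactly what forces this zero to land as $\mu_1$ rather than in the interior of the spectrum (where it would break the alternating pattern). Some care is also required to state the Courant--Fischer formulas over the correct subspace dimensions so that the two restriction-to-$W$ comparisons point in the directions claimed.
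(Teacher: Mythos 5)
Your proof is correct, but note that the paper itself offers no proof to compare against: this theorem is imported by citation from Horn and Johnson \cite[Theorem 4.3.15]{RAH-CRJ:85} and used as a black box (in the proofs of Proposition~\ref{prop:Lie-V} and Proposition~\ref{prop:mismatch}). Your argument is a valid standalone derivation, and the structure is sound: since $A_pe_p = 0$ and the quadratic forms of $A_p$ and $A$ agree on $W = \setdef{x \in \reals^d}{x_p = 0}$, the spectrum of $A_p$ is indeed that of the deleted principal submatrix $\tilde A$ together with one extra zero; positive semidefiniteness of $A$ passes to $\tilde A$ (pad vectors with a zero in slot $p$), which is exactly what pins the extra eigenvalue at the bottom as $\mu_1 = 0$ and gives $\mu_1 \le \lambda_1$ --- as you correctly sense, the claimed chain can fail for indefinite $A$, so this is where the hypothesis $A \succeq 0$ is genuinely consumed. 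Your Courant--Fischer bookkeeping also checks out: both $\mu_{k+1} = \beta_k$ (max-min over $(d-k)$-dimensional subspaces of $W$) and $\lambda_{k+1}$ (max-min over $(d-k)$-dimensional subspaces of $\reals^d$) range over subspaces of the same dimension, so the two restriction-to-$W$ comparisons point the right way, yielding $\lambda_k \le \beta_k \le \lambda_{k+1}$; the only residual subtlety, ties when $\beta_1 = 0$, is harmless since the sorted-multiset identification $\mu_1 = 0$, $\mu_{k+1} = \beta_k$ remains valid with equalities. In short, you have supplied a correct elementary proof of a result the authors chose merely to quote.
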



\bibliographystyle{siam}%
\bibliography{alias,Main,Main-add,JC}

\begin{thebibliography}{10}

\bibitem{DPB-DAC:91}
{\sc D.~P. Bertsekas and D.~A. Casta{\~n}{\'o}n}, {\em Parallel synchronous and
  asynchronous implementations of the auction algorithm}, Parallel Computing,
  17 (1991), pp.~707--732.

\bibitem{DPB-JNT:97}
{\sc D.~P. Bertsekas and J.~N. Tsitsiklis}, {\em Parallel and Distributed
  Computation: Numerical Methods}, Athena Scientific, 1997.

\bibitem{SB-LV:09}
{\sc S.~Boyd and L.~Vandenberghe}, {\em Convex Optimization}, Cambridge
  University Press, 2009.

\bibitem{MB-GN-FB-FA:12}
{\sc M.~Burger, G.~Notarstefano, F.~Bullo, and F.~Allgower}, {\em A distributed
  simplex algorithm for degenerate linear programs and multi-agent assignment},
  Automatica, 48 (2012), pp.~2298--2304.

\bibitem{JC:08-auto}
{\sc J.~Cort{\'e}s}, {\em Distributed algorithms for reaching consensus on
  general functions}, Automatica, 44 (2008), pp.~726--737.

\bibitem{DF-FP:10}
{\sc D.~Feijer and F.~Paganini}, {\em Stability of primal-dual gradient
  dynamics and applications to network optimization}, Automatica, 46 (2010),
  pp.~1974--1981.

\bibitem{BG-JC:14-tac}
{\sc B.~Gharesifard and J.~Cort\'es}, {\em Distributed continuous-time convex
  optimization on weight-balanced digraphs}, IEEE Transactions on Automatic
  Control, 59 (2014), pp.~781--786.

\bibitem{RG-RGS-ART:09}
{\sc R.~Goebel, R.~G. Sanfelice, and A.~R. Teel}, {\em Hybrid dynamical
  systems}, {IEEE} Control Systems Magazine, 29 (2009), pp.~28--93.

\bibitem{RG-RGS-ART:12}
\leavevmode\vrule height 2pt depth -1.6pt width 23pt, {\em Hybrid Dynamical
  Systems: Modeling, Stability, and Robustness}, Princeton University Press,
  2012.

\bibitem{GHH-JEL-GP:52}
{\sc G.~H. Hardy, J.~E. Littlewood, and G.~Polya}, {\em Inequalities},
  Cambridge University Press, Cambridge, UK, 1952.

\bibitem{WPMHH-KHJ-PT:12}
{\sc W.~P. M.~H. Heemels, K.~H. Johansson, and P.~Tabuada}, {\em An
  introduction to event-triggered and self-triggered control}, in {IEEE} Conf.\
  on Decision and Control, Maui, HI, 2012, pp.~3270--3285.

\bibitem{JPH:04}
{\sc J.~P. Hespanha}, {\em Uniform stability of switched linear systems:
  Extensions of {L}a{S}alle's {I}nvariance {P}rinciple}, IEEE Transactions on
  Automatic Control, 49 (2004), pp.~470--482.

\bibitem{RAH-CRJ:85}
{\sc R.~A. Horn and C.~R. Johnson}, {\em Matrix Analysis}, Cambridge University
  Press, 1985.

\bibitem{BJ-TK-MJ-KHJ:08}
{\sc B.~Johansson, T.~Keviczky, M.~Johansson, and K.~H. Johansson}, {\em
  Subgradient methods and consensus algorithms for solving convex optimization
  problems}, in {IEEE} Conf.\ on Decision and Control, Cancun, Mexico, 2008,
  pp.~4185--4190.

\bibitem{SSK-JC-SM:14-auto}
{\sc S.~S. Kia, J.~Cort\'es, and S.~Mart{\'\i}nez}, {\em Distributed convex
  optimization via continuous-time coordination algorithms with discrete-time
  communication}, Automatica,  (2014).
\newblock Submitted.

\bibitem{DL:03}
{\sc D.~Liberzon}, {\em Switching in Systems and Control}, Systems \& Control:
  Foundations \& Applications, Birkh{\"a}user, 2003.

\bibitem{OLM-RRM:79}
{\sc O.~L. Mangasarian and R.~R. Meyer}, {\em Nonlinear perturbation of linear
  programs}, SIAM Journal on Control and Optimization, 17 (1979), pp.~745--752.

\bibitem{MMJ-PT:10}
{\sc M.~{Mazo Jr.} and P.~Tabuada}, {\em Decentralized event-triggered control
  over wireless sensor/actuator networks}, IEEE Transactions on Automatic
  Control, 56 (2011), pp.~2456--2461.

\bibitem{AN-AO:09}
{\sc A.~Nedic and A.~Ozdaglar}, {\em Distributed subgradient methods for
  multi-agent optimization}, IEEE Transactions on Automatic Control, 54 (2009),
  pp.~48--61.

\bibitem{MGR-RDN:05}
{\sc M.~G. Rabbat and R.~D. Nowak}, {\em Quantized incremental algorithms for
  distributed optimization}, IEEE Journal on Selected Areas in Communications,
  23 (2005), pp.~798--808.

\bibitem{DR-JC:13-tac}
{\sc D.~Richert and J.~Cort\'es}, {\em Robust distributed linear programming},
  IEEE Transactions on Automatic Control,  (2013).
\newblock Submitted. Available at \url{http://carmenere.ucsd.edu/jorge}.

\bibitem{SS-SB-DG:07}
{\sc S.~Samar, S.~Boyd, and D.~Gorinevsky}, {\em Distributed estimation via
  dual decomposition}, in European Control Conference, Kos, Greece, July 2007,
  pp.~1511--1516.

\bibitem{AS:00}
{\sc A.~Schrijver}, {\em Theory of Linear and Integer Programming}, Wiley, New
  York, 2000.

\bibitem{PW-MDL:09}
{\sc P.~Wan and M.~D. Lemmon}, {\em Event-triggered distributed optimization in
  sensor networks}, in Symposium on Information Processing of Sensor Networks,
  San Francisco, CA, 2009, pp.~49--60.

\bibitem{XW-MDL:11}
{\sc X.~Wang and M.~D. Lemmon}, {\em Event-triggering in distributed networked
  control systems}, IEEE Transactions on Automatic Control, 56 (2011),
  pp.~586--601.

\bibitem{EW-AO:12}
{\sc E.~Wei and A.~Ozdaglar}, {\em Distributed alternating direction method of
  multipliers}, in {IEEE} Conf.\ on Decision and Control, Maui, HI, 2012,
  pp.~5445--5450.

\bibitem{LX-MJ-SPB:04}
{\sc L.~Xiao, M.~Johansson, and S.~P. Boyd}, {\em Simultaneous routing and
  resource allocation via dual decomposition}, IEEE Transactions on
  Communications, 52 (2004), pp.~1136--1144.

\bibitem{MZ-SM:13b}
{\sc M.~Zhu and S.~Mart{\'\i}nez}, {\em An approximate dual subgradient
  algorithm for distributed non-convex constrained optimization}, IEEE
  Transactions on Automatic Control, 58 (2013), pp.~1534--1539.

\end{thebibliography}

\end{document}